\documentclass[ejsv2,preprint,noshowframe]{imsart}
\RequirePackage[numbers]{natbib}
\RequirePackage[colorlinks,citecolor=blue,urlcolor=blue]{hyperref}
\RequirePackage{graphicx}
\usepackage{tabularx}
\usepackage[table]{xcolor}
\usepackage{newtxtext,newtxmath}
\usepackage{pifont}
\newcommand{\cmark}{\ding{51}}%
\newcommand{\xmark}{\ding{55}}%
\definecolor{tableheader}{RGB}{225,235,245}
\startlocaldefs
\theoremstyle{plain}

\newtheorem{theorem}{Theorem}[section]
\newtheorem{lemma}[theorem]{Lemma}
\newtheorem{remark}[theorem]{Remark}
\newtheorem{proposition}[theorem]{Proposition}
\theoremstyle{definition}

\theoremstyle{remark}

\endlocaldefs
\usepackage{mathtools}
\usepackage{booktabs}
\usepackage{amsthm}
\usepackage{bbm}
\begin{document}
\begin{frontmatter}
\title{Minimax Rates for Estimating\\ Powered Sliced Wasserstein Functionals}
%\title{A sample article title with some additional note\thanksref{t1}}
\runtitle{Minimax Rates for Estimating Powered Sliced Wasserstein Functionals}
%\thankstext{T1}{A sample additional note to the title.}

\begin{aug}
%%%%%%%%%%%%%%%%%%%%%%%%%%%%%%%%%%%%%%%%%%%%%%%
%% Only one address is permitted per author. %%
%% Only division, organization and e-mail is %%
%% included in the address.                  %%
%% Additional information can be included in %%
%% the Acknowledgments section if necessary. %%
%% ORCID can be inserted by command:         %%
%% \orcid{0000-0000-0000-0000}               %%
%%%%%%%%%%%%%%%%%%%%%%%%%%%%%%%%%%%%%%%%%%%%%%%
\author[A]{\fnms{Binh Thuan Tran}\ead[label=e1]{binh-thuan.tran3@univ-eiffel.fr}}
% \author[B]{\fnms{Second}~\snm{Author}\ead[label=e2]{second@somewhere.com}\orcid{0000-0000-0000-0000}}
% \and
% \author[B]{\fnms{Third}~\snm{Author}\ead[label=e3]{third@somewhere.com}}
%%%%%%%%%%%%%%%%%%%%%%%%%%%%%%%%%%%%%%%%%%%%%%
%% Addresses                                %%
%%%%%%%%%%%%%%%%%%%%%%%%%%%%%%%%%%%%%%%%%%%%%%
\address[A]{LAMA, LIGM, Université Gustave Eiffel\printead[presep={,\ }]{e1}}

% \address[B]{Department,
% University or Company Name\printead[presep={,\ }]{e2,e3}}
\runauthor{Binh Thuan Tran}
\end{aug}

\begin{abstract}
We study the two-sample estimation of the powered Sliced Wasserstein functional under projected $r$-th moment assumptions. Let $N=n\wedge m$, where $n$ and $m$ are two sample sizes. We establish the minimax rates in the noncritical regimes. When $p<r<2p$, the minimax risk is of order $N^{-(r-p)/r}$, while for $r>2p$ it is of order $N^{-1/2}$. In both cases, the natural plug-in estimator is minimax rate-optimal. At the endpoint $r=p$, the minimax risk is bounded away from zero, so uniformly consistent estimation is impossible. At the critical order $r=2p$, we obtain the exact worst-case rate of the plug-in estimator and a lower bound for the minimax risk, leaving a logarithmic gap between the lower and upper bounds. The exact minimax rate in this critical case remains open.
\end{abstract}

% \begin{keyword}[class=MSC]
% \kwdgroup[type=primary]{\kwd{00X00}
% \kwd{00X00}}
% \kwdgroup[type=secondary]{\kwd{00X00}}
% \end{keyword}

\begin{keyword}
\kwd{Optimal transport}
\kwd{Sliced Wasserstein distance}
\kwd{Minimax Estimation}
\end{keyword}

\end{frontmatter}
%%%%%%%%%%%%%%%%%%%%%%%%%%%%%%%%%%%%%%%%%%%%%%
%% Please use \tableofcontents for articles %%
%% with 50 pages and more                   %%
%%%%%%%%%%%%%%%%%%%%%%%%%%%%%%%%%%%%%%%%%%%%%%
%\tableofcontents
\section{Introduction}
\subsection{Motivation and statistical question}
Optimal transport provides a geometrically meaningful framework for comparing probability distributions and has become an important tool in probability, statistics, and machine learning; see, for example, \cite{villani2008optimal,peyre2019computational}. Among the resulting discrepancies, the Wasserstein distance is particularly attractive because it is well defined under mild moment assumptions, does not require the distributions to admit densities with respect to a common dominating measure, and accounts for the geometry of the sample space. These properties have motivated applications in generative modeling, autoencoders, and domain adaptation \citep{arjovsky2017wasserstein,tolstikhin2017wasserstein,courty2016optimal}, as well as transport-inspired flow-matching methods \citep{lipman2022flow}.

Despite these advantages, Wasserstein distances face important computational and statistical limitations. For discrete measures supported on at most $K$ points, exact computation requires solving a large-scale linear program, and standard exact algorithms can have a worst-case complexity of order $O(K^3\log K)$ \citep{peyre2019computational}. Moreover, for probability measures on $\mathbb{R}^d$, where $d$ denotes the ambient dimension, empirical Wasserstein distances based on $n$ observations suffer from the curse of dimensionality, with convergence rates that can be as slow as $n^{-1/d}$ \citep{fournier2015rate,weed2019sharp}. These limitations can make their direct use impractical in large-scale and high-dimensional statistical problems.

These difficulties have motivated the development of more tractable alternatives based on low-dimensional projections. A prominent example is the Sliced Wasserstein distance, introduced by \citet{rabin2011sw}, which compares two multivariate probability measures by projecting them onto one-dimensional subspaces, computing the corresponding one-dimensional Wasserstein distances, and averaging over the unit sphere. By reducing a multivariate transport problem to a collection of one-dimensional ones, the Sliced Wasserstein distance benefits from the explicit quantile representation and computational simplicity of one-dimensional optimal transport while retaining relevant geometric information; see \cite{nadjahi2021sliced}. These properties have led to numerous applications; we refer to \citet[Chapter~6]{nguyen2025introduction} for a broader overview. 

In statistical applications, however, the underlying distributions are unknown and only independent observations from them are available. This raises the question of how accurately the population Sliced Wasserstein functional can be approximated by its empirical counterpart. 

In this work, we focus on the powered Sliced Wasserstein functional. The powered quantity is itself of interest in several applications. In particular, it has been used directly in  Sliced Wasserstein barycenter problems, generative modeling, discrete optimization and two-sample testing \citep{bonneel2015sliced,deshpande2018generative,tanguy2025properties, tran2025minimax}. It is therefore natural to study the statistical accuracy with which this population quantity can be estimated from samples.

Existing results provide upper bounds for the estimation of Sliced Wasserstein functionals under different assumptions, including compact-support and moment conditions (see Table~\ref{tab:overview} for a summary and Section~\ref{sec:relatedwork} for a detailed discussion). For the powered functional under projected moment assumptions, however, the optimal estimation rates are not fully understood. In particular, it remains unclear how the estimation rate changes as the available projected moment order varies. This leads to the main question considered in this paper: how does the optimal rate for estimating the powered Sliced Wasserstein functional depend on the available projected moment order? In the next subsection, we summarize our main contributions to the two-sample estimation of the powered Sliced Wasserstein functional, whose formal statistical setting is introduced in Section~\ref{sec:problemsetting}, and briefly compare our results with existing ones.
\subsection{Overview and main contributions}\label{sec:overview-maincontributions}
Throughout this subsection, $p$ denotes the order of the powered Sliced Wasserstein functional, $r$ the projected moment order, and $N\coloneqq n \wedge m$, where $n$ and $m$ are the two sample sizes. The precise statistical setting and the corresponding risks are formally defined in Section~\ref{sec:problemsetting}. Our main contributions are summarized as follows:
\begin{itemize}
\item \textbf{Minimax rates in the noncritical regimes.} In the regime $r>2p$, we remove the logarithmic factor appearing in the upper bound of \citet[Theorem~2 and Eq.~(19)]{manole2022minimax} and obtain the parametric rate $N^{-1/2}$. We also establish a matching minimax lower bound, showing that the natural plug-in estimator is minimax rate-optimal in this regime.

In the intermediate regime $p<r<2p$, we establish matching upper and lower bounds of order $N^{-(r-p)/r}$. This identifies the minimax rate and proves the minimax rate-optimality of the plug-in estimator in both noncritical regimes. Moreover, the two-distribution pairs used in our lower-bound construction for $p<r<2p$ may both be chosen off the diagonal, that is, $\mu_0 \ne \nu_0$ and $\mu_1 \ne \nu_1$. Hence, the same minimax rate persists even when the estimation problem is restricted to pairs of distinct distributions, confirming the expectation stated by \citet{manole2022minimax} that excluding the diagonal does not restore the parametric rate; see Theorem~\ref{thm:minimax-noncritical} and Remark~\ref{rem:distinct-distributions}.
\item \textbf{Boundary moment regimes and an improved critical lower bound.} At the minimal moment order $r=p$, we show that the minimax risk remains bounded away from zero. Consequently, no estimator is uniformly consistent under this projected moment assumption; see Remark~\ref{rem:minimal-moment-order}.

At the critical moment order $r=2p$, we establish matching upper and lower bounds for the worst-case risk of the plug-in estimator, showing that it is of order $\sqrt{\log(N+1)/{N}}$. For the minimax risk, we obtain a lower bound of order $[\log(N+1)]^{1/4}/{\sqrt{N}}$. Together with the upper bound provided by the plug-in estimator, this leaves a gap of order at most $[\log(N+1)]^{1/4}$. Consequently, the exact minimax rate and the minimax rate-optimality of the plug-in estimator remain open at the critical moment order; see Proposition~\ref{prop:r=2p}.

\item As a by-product, in the critical regime $r=2p$, Lemma~\ref{lem:projected-empirical-ratev2} improves the one-dimensional bound of \citet[Corollary~7.18]{bobkov2019one} for the expected powered $p$-Wasserstein error of the empirical measure, from $\log(n+1)/\sqrt{n}$ to $\sqrt{\log(n+1)/n}$. We also note that, combined with Lemma~\ref{lem:onedimensionalcaser=2p}, this upper bound is sharp, yielding the exact worst-case rate $\sqrt{\log(n+1)/n}$ for the expected powered $p$- Wasserstein error of the empirical measure over distributions with bounded $2p$-th moment.
\end{itemize}
Table~\ref{tab:overview} summarizes the closest existing two-sample estimation results for Sliced Wasserstein functionals and places our contributions in this context.
\setlength{\intextsep}{8pt plus 2pt minus 2pt}
\begin{table}[!htbp]
\centering
\caption{Overview of existing results for the two-sample estimation of Sliced Wasserstein (SW) functionals.}
\label{tab:overview}
\renewcommand{\arraystretch}{1.25}
\setlength{\tabcolsep}{4.5pt}
\begin{tabularx}{\textwidth}{>{\centering\arraybackslash}p{0.22\textwidth}
>{\centering\arraybackslash}p{0.13\textwidth}>{\centering\arraybackslash}X>{\centering\arraybackslash}p{0.13\textwidth}>{\centering\arraybackslash}p{0.14\textwidth}}
\toprule
\rowcolor{tableheader}
\textbf{Reference}&\textbf{Estimand}&\textbf{Assumption}&\textbf{Upper bound}&\textbf{Lower bound/ Optimality}
\\
\midrule
\citet[Corollary~2]{nadjahi2020statistical}
&$\operatorname{SW}_p$&
Finite $r$-th moments, $r>p$&\cmark&\xmark
\\
\citet[Theorem~1(i),Proposition~4]{manole2022minimax},
&$\mathrm{SW}_p$&$\mathcal{K}_{p,r}(b)$, $r>2p$;
see \eqref{sec1:eqclass}&\cmark&\cmark
\\
\citet[Proposition~1]{nietert2022statistical}
&$\mathrm{SW}_p^p$&Compact support
&\cmark&\xmark
\\
\citet[Theorem~2]{manole2022minimax}
&$\mathrm{SW}_p^p$&$\mathcal{K}_{p,r}(b)$, $r>2p$;see \eqref{sec1:eqclass}
&\cmark&\xmark
\\
\textbf{This work}&$\operatorname{SW}^p_p$
&$\mathcal{K}_{p,r}(b)$; see \eqref{sec1:eqclass}
&\cmark&\textbf{\cmark}
\\
\bottomrule
\end{tabularx}
\end{table}
We conclude the introduction with some notation and a description of the organization of the paper.

\textbf{Notation.} Throughout the paper, $n$ and $m$ denote the two sample sizes, and we
assume without loss of generality that $1\le n\le m$. We write $A\lesssim B$ if $A\le CB$ for some constant $C>0$, and $A\gtrsim B$ if $B\lesssim A$. We write $A\asymp B$ when both $A\lesssim B$ and
$A\gtrsim B$ hold. Subscripts indicate the parameters on which the implicit constants may depend; for example, $A\lesssim_{p,r}B$ means that the corresponding constant depends only on $p$ and $r$. We also use $C_p$, $C_{p,r}$, and similar notation for positive constants depending only on the indicated parameters. Their values may change from line to line. For $x\in\mathbb R$, we write $[x]_+ \coloneqq \max\{x,0\}$ for its positive part and $\lceil x\rceil$ for the ceiling of $x$. Finally, we denote by $\mathbb{S}^{d-1}$ the unit sphere in $\mathbb{R}^d$, by $\sigma$ the uniform probability measure on $\mathbb{S}^{d-1}$. For a probability measure $\xi$ on $\mathbb{R}^d$, a number $q\ge 1$, and $\theta \in \mathbb{S}^{d-1}$, we denote by $M_{\xi,r}(\theta)\coloneqq \mathbb{E}_{X \sim \xi}\left|\langle X,\theta \rangle\right|^r$.
% We also use the projected-moment notation $M_{Q,q}(\theta) \coloneqq \mathbb E_{X\sim Q} |\langle X,\theta\rangle|^q$ for $q\ge 1$, $Q\in\mathcal P(\mathbb R^d)$, and a direction $\theta$ on the unit sphere $\mathbb{S}^{d-1}$.

\textbf{Organization.} The remainder of the paper is organized as follows. Section~\ref{sec:problemsetting} formally introduces the problem setting and presents the necessary preliminaries. Section~\ref{sec:relatedworkandmainresults} reviews the related work and presents our main results. Section~\ref{sec:proofsupperbound} develops the upper-bound analysis for the plug-in estimator in the noncritical and critical moment regimes, while Section~\ref{sec:prooflowerbound} establishes the corresponding lower bounds for the minimax and plug-in risks. The omitted proofs are deferred to the appendices.
\section{Problem setting}\label{sec:problemsetting}
\subsection{Wasserstein and Sliced Wasserstein distances}
We denote by $\mathcal P(\mathbb R^d)$ the set of Borel probability measures on $\mathbb R^d$. For $p\geq1$, we further denote by $\mathcal P_p(\mathbb R^d)$ the subset of probability measures with finite $p$-th moment $$\mathcal P_p(\mathbb R^d) \coloneqq \left\{ \mu \in \mathcal P(\mathbb R^d): \mathbb E_{X\sim \mu}\|X\|^p <+\infty \right\},$$ where $\|\cdot\|$ denotes the Euclidean norm on $\mathbb R^d$.

For $p\geq 1$, the $p$-Wasserstein distance~\citep[Section~6]{villani2008optimal} between two probability measures $\mu,\nu\in\mathcal P_p(\mathbb R^d)$ is defined by
\begin{equation*}
\operatorname{W}_p(\mu,\nu) \coloneqq \left(\inf_{\pi\in\Gamma(\mu,\nu)}\int_{\mathbb R^d \times \mathbb R^d}\|x-y\|^pd\pi(x,y)\right)^{1/p},
\end{equation*}
where $\Gamma(\mu,\nu)$ denotes the set of all couplings between $\mu$ and $\nu$, that is, the set of probability measures on $\mathbb R^d\times\mathbb R^d$ whose first and second marginals are $\mu$ and $\nu$, respectively.

In general, the $p$-Wasserstein distance does not admit a closed-form expression, except in some special cases and in the one-dimensional setting; see, for instance, \citet[Section~2.4.1]{nadjahi2021sliced}. When $d=1$, the $p$-Wasserstein distance admits an explicit representation in terms of quantile functions. More precisely, for $\mu,\nu\in\mathcal P_p(\mathbb R)$, let $F_\mu$ and $F_\nu$ denote their cumulative distribution functions. We define their respective quantile functions by
\begin{equation*}
F_\mu^{-1}(u) \coloneqq \inf\left\{x\in\mathbb R: F_\mu(x)\ge u\right\},\qquad u\in(0,1),
\end{equation*}
and similarly for $F_\nu^{-1}$. Then,
$$
\operatorname{W}_p^p(\mu,\nu)=\int_0^1 |F_\mu^{-1}(u)-F_\nu^{-1}(u)|^pdu.
$$

The Sliced Wasserstein distance is defined by averaging Wasserstein distances between one-dimensional projections of the distributions. More precisely, let
$\mathbb S^{d-1}\coloneqq \left\{\theta\in\mathbb R^d:\|\theta\|=1\right\}$
denote the unit sphere in $\mathbb R^d$. For each $\theta\in\mathbb S^{d-1}$, let $\Pi^\theta:\mathbb R^d\to\mathbb R$ be the linear projection defined as $\Pi^\theta(x)\coloneqq \langle \theta,x\rangle.$
We denote by $\Pi^{\theta}_{\#}\mu$ the pushforward of $\mu$ by $\Pi^\theta$, that is, the distribution of $\langle \theta,X\rangle$ when $X\sim\mu$. Then, for $\mu,\nu\in\mathcal P_p(\mathbb R^d)$, the Sliced Wasserstein distance is defined as
$$\operatorname{SW}_p(\mu,\nu)\coloneqq \left(\int_{\mathbb{S}^{d-1}}\operatorname{W}^p_p\left(\Pi^{\theta}_{\#}\mu,\Pi^{\theta}_{\#}\nu\right)d\sigma(\theta)\right)^{1/p},
$$
where $\sigma$ denotes the uniform probability measure on $\mathbb{S}^{d-1}$. Accordingly, its $p$-th power is given by
$$
\operatorname{SW}_p^p(\mu,\nu) = \int_{\mathbb S^{d-1}} \operatorname{W}_p^p\left(\Pi^\theta_{\#}\mu, \Pi^\theta_{\#}\nu
\right)d\sigma(\theta).
$$

\subsection{Two-sample estimation problem and risks}
Let $X_1,\ldots,X_n$ and $Y_1,\ldots,Y_m$ be two independent samples, where $X_1,\ldots,X_n$ are i.i.d.\ with distribution $\mu$ and $Y_1,\ldots,Y_m$ are i.i.d.\ with distribution $\nu$. We denote the corresponding empirical measures by
$$\widehat{\mu}_n \coloneqq\frac{1}{n}\sum_{i=1}^n \delta_{X_i}, \qquad \widehat{\nu}_m\coloneqq\frac{1}{m}\sum_{j=1}^m \delta_{Y_j}.
$$
The corresponding plug-in estimator of $\operatorname{SW}_p^p(\mu,\nu)$ is
$\operatorname{SW}_p^p\left( \widehat{\mu}_n,\widehat{\nu}_m\right).$
We measure its error under absolute loss and define its worst-case risk over the class $\mathcal{C} \subset \mathcal{P}_p\left(\mathbb{R}^d\right)$ by
\begin{equation*}
\mathcal R_{n,m}^{\mathrm{plug}} \left(\mathcal C\right)
\coloneqq\sup_{\mu,\nu\in\mathcal C}\mathbb E_{\mu,\nu}\left|
\operatorname{SW}_p^p\left(\widehat{\mu}_n,\widehat{\nu}_m\right)-\operatorname{SW}_p^p(\mu,\nu)\right|,
\end{equation*}
where $\mathbb E_{\mu,\nu}$ denotes expectation under the joint distribution
$\mu^{\otimes n}\otimes\nu^{\otimes m}$, with the shorthand $\mathbb E$ used thereafter whenever the underlying sampling law is clear from context. This worst-case risk quantifies the uniform estimation error of the plug-in estimator over the considered class, rather than its pointwise error for a fixed pair $(\mu,\nu)$.

To compare the plug-in estimator with all possible estimation procedures, we adopt the standard minimax framework \citep{tsybakov2008introduction} and define the minimax risk by
\begin{equation*}
\mathcal R_{n,m}^{\star}\left(\mathcal C
\right)\coloneqq\inf_{\widehat T_{n,m}} \sup_{\mu,\nu\in\mathcal C}
\mathbb E_{\mu,\nu}\left|\widehat T_{n,m}-\operatorname{SW}_p^p(\mu,\nu)
\right|,
\end{equation*}
where the infimum is taken over all estimators $\widehat T_{n,m}$ based on the two samples. The minimax risk represents the best possible worst-case error over $\mathcal C$ and therefore provides the benchmark for evaluating the plug-in estimator. We say that the plug-in estimator is minimax rate-optimal over $\mathcal{C}$ if
\begin{equation*}
\mathcal R_{n,m}^{\mathrm{plug}}
\left(\mathcal C\right)\lesssim \mathcal R_{n,m}^{\star} \left(\mathcal C\right),
\end{equation*}
where the implicit constant is independent of $n$ and $m$. We next introduce the class of probability measures over which the preceding estimation problem is studied.
% Beyond the estimation problem considered here, it is interesting to note that the bounds developed over projected moment classes may also help extend the two-sample testing analysis of \citet{tran2025minimax} beyond bounded-support distributions.
\subsection{Projected moment classes}\label{sec:projectedmoment}
For a probability measure $\mu\in\mathcal P(\mathbb R^d)$, a real number $r \ge 1$, and a direction $\theta\in\mathbb S^{d-1}$. Following \citet{manole2022minimax}, for $r\geq p$ and $b>0$, we consider the projected moment class
\begin{equation}\label{sec1:eqclass}
\mathcal K_{p,r}(b) \coloneqq \left\{ \mu\in\mathcal P(\mathbb R^d): \int_{\mathbb S^{d-1}}
\left(M_{\mu,r}(\theta) \right)^{p/r} d\sigma(\theta)
\le b\right\}.
\end{equation}
This class is naturally adapted to the sliced structure of the functional $\operatorname{SW}_p^p$, since both the parameter space and the estimand are defined through averages of one-dimensional quantities over projection directions. In contrast to a uniform directional moment condition, the definition of $\mathcal K_{p,r}(b)$ only requires the projected moments to be controlled on average over the sphere.

We first record several basic properties of the projected moment class in the following lemma, whose proof is deferred to Appendix~\ref{appendixA}.
\begin{lemma}\label{lem:PropertiesK_pr}
Let $r\geq p\geq 1$ and $b>0$. Then, $ \mathcal K_{p,r}(b)\subseteq\mathcal P_p(\mathbb R^d).$
Moreover, for every $B,R>0$,
$$ \left\{ \mu\in\mathcal P_r(\mathbb R^d): \mathbb E_{X\sim\mu}\left[\|X\|^r\right]\le B
\right\}\subseteq
\mathcal K_{p,r}\left(B^{p/r}\right),
$$
and
$$
\left\{\mu\in\mathcal P(\mathbb R^d):
\operatorname{supp}(\mu)\subseteq\mathbb B(0,R)\right\}\subseteq
\mathcal K_{p,r}(R^p),
$$
where $\mathbb B(0,R)$ denotes the closed Euclidean ball of radius $R$ centered at the origin. 

Finally, for every $\mu,\nu\in\mathcal K_{p,r}(b)$, $\operatorname{SW}_p^p(\mu,\nu) \le 2^p b.$
\end{lemma}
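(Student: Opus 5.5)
The plan is to prove the four assertions of Lemma~\ref{lem:PropertiesK_pr} separately. Each reduces to an elementary inequality applied direction by direction, followed by integration over $\sigma$.

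\emph{Inclusion in $\mathcal P_p(\mathbb R^d)$.} I will use the classical identity: for any $x\in\mathbb R^d$,
\[
\int_{\mathbb S^{d-1}}|\langle x,\theta\rangle|^p\,d\sigma(\theta)=c_{d,p}\|x\|^p,
\qquad c_{d,p}\coloneqq\int_{\mathbb S^{d-1}}|\theta_1|^p\,d\sigma(\theta)\in(0,\infty).
\]
This follows from rotation invariance of $\sigma$. By Jensen's inequality with $r\ge p$, we have $M_{\mu,p}(\theta)\le M_{\mu,r}(\theta)^{p/r}$ for every $\theta$. Tonelli's theorem then gives
\[
c_{d,p}\,\mathbb E_\mu\|X\|^p=\int_{\mathbb S^{d-1}} M_{\mu,p}(\theta)\,d\sigma(\theta)\le b<\infty.
\]
Since $c_{d,p}>0$, this yields $\mu\in\mathcal P_p(\mathbb R^d)$. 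The only point requiring care is $c_{d,p}>0$, which is clear because $|\theta_1|^p>0$ on a set of positive $\sigma$-measure.

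\emph{Moment and support inclusions.} Cauchy--Schwarz gives $|\langle X,\theta\rangle|\le\|X\|$, so $M_{\mu,r}(\theta)\le\mathbb E\|X\|^r\le B$ for every $\theta$. Hence $M_{\mu,r}(\theta)^{p/r}\le B^{p/r}$, and integrating against the probability measure $\sigma$ gives the first inclusion. For compact support in $\mathbb B(0,R)$, we have $\mathbb E\|X\|^r\le R^r$. The previous argument with $B=R^r$ then places $\mu$ in $\mathcal K_{p,r}(R^p)$.

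\emph{Bound on $\operatorname{SW}_p^p$.} Fix $\theta$ and write $\mu_\theta=\Pi^\theta_\#\mu$, $\nu_\theta=\Pi^\theta_\#\nu$. I will bound $\operatorname{W}_p^p(\mu_\theta,\nu_\theta)$ by the cost of the independent coupling. Using $|a-b|^p\le 2^{p-1}(|a|^p+|b|^p)$, this gives
\[
\operatorname{W}_p^p(\mu_\theta,\nu_\theta)\le 2^{p-1}\bigl(M_{\mu,p}(\theta)+M_{\nu,p}(\theta)\bigr).
\]
Jensen's inequality bounds each $M_{\cdot,p}(\theta)$ by $M_{\cdot,r}(\theta)^{p/r}$. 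Integrating over $\sigma$ and using the definition of $\mathcal K_{p,r}(b)$ gives $\operatorname{SW}_p^p(\mu,\nu)\le 2^{p-1}\cdot 2b=2^pb$.

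Measurability of $\theta\mapsto \operatorname{W}_p^p(\mu_\theta,\nu_\theta)$ is implicit in the definition of $\operatorname{SW}_p$, so it is assumed here. I expect no real obstacle; the rotation-invariance identity in the first step is the only nonroutine ingredient.
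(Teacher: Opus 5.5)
Your proposal is correct and follows the paper's proof essentially step by step: the same rotation-invariance identity combined with Tonelli and $M_{\mu,p}(\theta)\le M_{\mu,r}(\theta)^{p/r}$ for the first inclusion, Cauchy--Schwarz for the moment and support inclusions, and the directional bound $\operatorname{W}_p^p(\Pi^\theta_{\#}\mu,\Pi^\theta_{\#}\nu)\le 2^{p-1}\bigl(M_{\mu,p}(\theta)+M_{\nu,p}(\theta)\bigr)$ integrated over $\sigma$ for the last claim. The only cosmetic difference is that you obtain this directional bound from the independent coupling, whereas the paper uses the triangle inequality for $\operatorname{W}_p$ through $\delta_0$ followed by $(a+c)^p\le 2^{p-1}(a^p+c^p)$; both give the same constant $2^pb$.
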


This lemma shows that the condition defining $\mathcal K_{p,r}(b)$ is sufficient to ensure that the powered Sliced Wasserstein functional is finite and uniformly bounded over the parameter space. The endpoint $r=p$ is the natural starting point, since it is the minimal moment order under which the projected moment condition guarantees finiteness of the powered Sliced Wasserstein functional for every pair of distributions in the parameter class. As shown later in Remark~\ref{rem:minimal-moment-order}, however, this condition is not sufficient for uniformly consistent estimation: the minimax risk over $\mathcal K_{p,p}(b)$ remains bounded away from zero. We therefore consider the full range $r\geq p$ to determine how increasingly strong projected moment assumptions affect estimability and the optimal convergence rate.

The preceding inclusions also show that $\mathcal K_{p,r}(b)$ contains several standard subclasses, including distributions with bounded Euclidean $r$-th moments and compactly supported distributions. Hence, our uniform upper bounds remain valid on these subclasses after an appropriate adjustment of the class radius. Their minimax rates, however, may differ and would require separate lower-bound analyses.

Beyond the estimation problem considered here, it is interesting to note that the bounds developed over projected moment classes may also help extend the two-sample testing analysis of \citet{tran2025minimax} beyond bounded-support distributions.
% Beyond the estimation problem studied here, sharp bounds over projected moment classes may also help extend the two-sample testing analysis of \citet{tran2025minimax} beyond bounded support.
% \todo[inline]{Transition between the sections?}

With this setting in place, we next summarize our main results and discuss the closest related work. 
\section{Related work and main results}\label{sec:relatedworkandmainresults}
\subsection{Related work}\label{sec:relatedwork}
\citet[Theorem~1]{fournier2015rate} established nonasymptotic bounds for $\mathbb E\operatorname{W}_p^p(\widehat{\mu}_n,\mu)$, where $\widehat{\mu}_n$ is the empirical measure based on $n$ i.i.d.\ observations from $\mu$. Their results hold for every $p\ge 1$ and dimension $d \ge 1$, under the assumption that $\mu$ has a finite $r$-th moment for some $r>p$, with rates depending on the relation between $d,p$ and $r$. These results concern distribution estimation by the empirical measure under powered Wasserstein loss whereas our objective is the two-sample estimation of the powered Sliced Wasserstein functional. Specializing their results to dimension one provides some of the bounds needed in our analysis. Indeed, when $p=1$, the reverse triangle inequality directly reduces the two-sample estimation error to two terms that can be controlled using their results. For $p>1$, however, no analogous direct reduction is available. Our decomposition in Lemma~\ref{lem:decomposition} then involves, in addition to terms that can still be controlled using their results, weighted quantile terms that require a separate analysis. Deriving suitable bounds for these weighted quantile terms under projected moment assumptions is a main technical contribution of our upper-bound analysis.

For the two-sample estimation problem, \citet[Theorem~1, Corollary~3.1]{staudt2025convergence} studied the convergence of empirical optimal transport costs under finite $(q+\epsilon)$-th moment assumptions, where $p<q \le 2p$ and $\epsilon>0$. Their result holds in arbitrary dimension and allows for unbounded supports. After specializing their result to the one-dimensional Wasserstein setting and translating their moment condition into our notation, one can recover the parametric rate in the regime $r>2p$. However, the same specialization does not yield the sharp rates obtained here when $p<r\le 2p$. We provide the detailed calculation of this comparison in Appendix~\ref{appendix:SpecializationStaudt} Besides, the main proof strategy of \citet{staudt2025convergence} relies on independence between the two samples in the two-sample setting (see their Section~6). In contrast, our upper-bound argument does not require this assumption: after the decomposition in Lemma~\ref{lem:decomposition}, each resulting random term depends on only one of the two empirical samples and therefore can be controlled separately. Although \citet{staudt2025convergence} also provide an alternative approach in Appendix~A that removes the independence assumption, the assumptions of this approach do not cover the cost $c(x,y)=|x-y|^p$ when $p>1$, which is the cost defining the powered $p$-Wasserstein distance in one dimension. The verification is given in Appendix~\ref{appendix:verificationStaudt}

% \citet{fournier2015rate} established nonasymptotic one-sample bounds for
% $ \mathbb E\operatorname{W}_p^p(\widehat{\mu}_n,\mu),$
% where $\widehat{\mu}_n$ is the empirical measure based on $n$ i.i.d.\ observations from $\mu$. Their results hold for every $p\ge 1$ and dimension $d\ge1$, under the assumption that $\mu$ has a finite $r$-th moment for some $r>p$, with rates depending on the relation between $d$, $p$, and $r$. These results concern one-sample approximation, whereas our objective is the two-sample estimation of the powered Sliced Wasserstein functional. Specializing their result to dimension one yields the noncritical empirical Wasserstein estimates used in Lemma~\ref{lem:projected-empirical-rate}. When $p=1$, the reverse triangle inequality directly reduces the two-sample error to two one-sample Wasserstein errors. For $p>1$, however, no analogous direct reduction is available for the powered functional, and obtaining the corresponding two-sample rates requires additional analysis. Indeed, Lemma~\ref{lem:decomposition} decomposes the two-sample error into the one-sample Wasserstein terms $\mathsf{C}_{n,\theta}$ and $\mathsf{D}_{m,\theta}$ and the weighted quantile terms $\mathsf{A}_{n,\theta}$ and $\mathsf{B}_{m,\theta}$. The latter are not controlled by standard one-sample Wasserstein estimates and must be treated separately. Deriving suitable bounds for these weighted quantile terms under projected moment assumptions is a main technical contribution of our upper-bound analysis.

\citet[Corollary~2]{nadjahi2020statistical} obtained dimension-free
two-sample upper bounds for the Sliced Wasserstein metric
$\operatorname{SW}_p$ under the assumption that, for some $r>p$, $\mathbb E_{X\sim\mu}\|X\|^r<+\infty$ and 
$\mathbb E_{Y\sim\nu}\|Y\|^r<+\infty$. Their result concerns the metric $\operatorname{SW}_p$, rather than
the powered functional $\operatorname{SW}_p^p$, and Corollary~2 does not establish matching lower bounds or minimax optimality of the
resulting rates.

A two-sample result for the powered functional was obtained by \citet[Proposition~1]{nietert2022statistical}. If $\mu$ and $\nu$ are compactly supported, with the diameters of their supports bounded by $R$, they prove
\begin{equation*}
\mathbb{E}\left|\operatorname{SW}^p_p(\widehat{\mu}_n,\widehat{\nu}_n)-\operatorname{SW}^p_p(\mu,\nu)\right| \le C_{p,R}n^{-1/2}.
\end{equation*}
This establishes a parametric rate under compact support, but does not cover distributions satisfying only moment assumptions.
% Their proof first uses a comparison inequality that bounds the difference between powered $\operatorname{W}_p$ costs by one-sample $\operatorname{W}_1$ errors for compactly supported distributions (see, for example \citet[Lemma~4]{goldfeld2024statistical}). Using the integral
% representation of $\operatorname{W}_1$, they then obtain
% \begin{equation*}
% \mathbb{E}\operatorname{W}_1\left(\widehat{\mu}_n,\mu\right) \le \frac{1}{\sqrt{n}}\int_{\mathbb{R}}\sqrt{F_{\mu}(x)(1-F_{\mu}(x))}dx,
% \end{equation*}
% where $F_{\mu}$ denotes the CDF of $\mu$. An analogous estimate holds for the second distribution. The bounded-support assumption is then used to control the length of the interval over which the CDF error is integrated. Under moment assumptions alone, both the comparison step and the tail integral require additional analysis.

The closest work to ours is \citet{manole2022minimax}, who studied the same projected moment class $\mathcal{K}_{p,r}(b)$. In the regime $r>2p$, they obtained convergence rates for both the Sliced Wasserstein distance $\operatorname{SW}_p$ and its powered version $\operatorname{SW}^p_p$. For $\operatorname{SW}_p$, Proposition~4 gives an upper bound of order $n^{-1/(2p)}$, and their lower-bound construction yields a matching minimax lower bound; hence, the plug-in estimator is minimax rate-optimal in this regime. \footnote{Although Theorem~1(i) of \citet{manole2022minimax} is stated for $\delta>0$, the lower-bound construction in their Appendix~C also applies to the case $\delta =0$, yielding a lower bound that matches the rate of the upper bound in their Proposition~4.} For the powered functional $\operatorname{SW}^p_p$, however, \citet[Theorem~2 and Eq.~(19)]{manole2022minimax} only established the upper bound
\begin{equation*}
\mathbb E\left|\operatorname{SW}_p^p(\widehat{\mu}_n,\widehat{\nu}_m)-\operatorname{SW}_p^p(\mu,\nu)\right|\lesssim_{p,r}b
\left(\sqrt{\frac{\log n}{n}}
+\sqrt{\frac{\log m}{m}} \right),
\end{equation*}
without a matching minimax lower bound. Thus, even in the regime $r>2p$, the sharp estimation rate for the powered functional remained unresolved. In contrast, our localized empirical quantile estimates and peeling arguments remove this logarithmic loss and recover the parametric rate. For comparison, Appendix~\ref{appendix:comparisonmanole} revisits the approach of \citet{manole2022minimax} and explains where the logarithmic factor enters their argument. Besides, they also observed that $r\ge p$ is the minimal moment requirement under which the projected moment condition guarantees the finiteness of $\operatorname{SW}_p^p$. When $r<p$, the class may contain distributions for which this functional is infinite. For $p<r<2p$, they only noted that, in the special case $d=1$ and $\mu=\nu$, the rate cannot be improved beyond $n^{-(r-p)/r}$, while the sharp rate under the restriction $\mu\neq\nu$ remained unknown. They further
expected that the parametric rate should remain unattainable even under this restriction. Our lower-bound construction confirms this expectation in arbitrary
dimension and shows that the rate $n^{-(r-p)/r}$ remains sharp even when the estimation problem is restricted to pairs satisfying $\mu\neq\nu$.

We now state the main results obtained in this paper.
\subsection{Main results}\label{sec:mainresults}
% The main results of this paper focus on the minimax estimation of the
% powered Sliced Wasserstein functional over the projected moment classes
% $\mathcal K_{p,r}(b)$ and on how the optimal rates depend on the
% available projected moment order $r$. We first characterize the minimax rates in the two noncritical regimes.
We study the minimax estimation of the powered Sliced Wasserstein functional over $\mathcal{K}_{p,r}(b)$ and characterize how the optimal rates depend on the moment order $r$. We recall that $n$ and $m$ denote the two sample sizes and assume that $1 \le n \le m$. First, we consider the two noncritical regimes.
\begin{theorem}
\label{thm:minimax-noncritical}
Let $p\ge1$, $r> p$, $b>0$. Suppose that $r\neq 2p$. The plug-in estimator is minimax rate-optimal over
$\mathcal K_{p,r}(b)$. More precisely,
\begin{equation*}
\mathcal R_{n,m}^{\mathrm{plug}}
\left(\mathcal K_{p,r}(b)
\right) \asymp_{p,r} \mathcal R_{n,m}^{\star} \left( \mathcal K_{p,r}(b)
\right)\asymp_{p,r}b
\begin{cases}
n^{-(r-p)/r}, & p< r<2p,\\[2mm]
n^{-1/2}, & r>2p.
\end{cases}
\end{equation*}
\end{theorem}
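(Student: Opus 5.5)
The plan has two halves: an upper bound for the plug-in estimator, and a matching minimax lower bound. Since the minimax risk never exceeds the plug-in risk, the chain $\mathcal R^\star\le\mathcal R^{\mathrm{plug}}\lesssim b\,\rho_n\lesssim\mathcal R^\star$ then gives the full statement, where $\rho_n=n^{-(r-p)/r}$ or $n^{-1/2}$. By scaling ($x\mapsto b^{-1/p}x$ maps $\mathcal K_{p,r}(b)$ onto $\mathcal K_{p,r}(1)$ and multiplies $\operatorname{SW}_p^p$ by $b^{-1}$) we may take $b=1$.

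\textbf{Upper bound.} For a fixed direction $\theta$, write $F_\theta,G_\theta$ for the projected quantile functions and $\widehat F_\theta,\widehat G_\theta$ for their empirical versions. For $p>1$, expand $|a'-b'|^p-|a-b|^p$ using the mean value theorem applied to $t\mapsto|t|^p$. This yields the decomposition announced as Lemma~\ref{lem:decomposition}:
\begin{itemize}
\item two one-sample terms $\mathsf C_{n,\theta}\asymp \operatorname{W}_p^p(\Pi^\theta_\#\widehat\mu_n,\Pi^\theta_\#\mu)$ and $\mathsf D_{m,\theta}$;
\item two weighted quantile terms $\mathsf A_{n,\theta}=\int_0^1|F_\theta-G_\theta|^{p-1}|\widehat F_\theta-F_\theta|\,du$ and $\mathsf B_{m,\theta}$.
\end{itemize}
Each term depends on only one sample.

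For the one-sample terms, I would use the one-dimensional bounds of Fournier--Guillin (equivalently Bobkov--Ledoux). They give $\mathbb E\mathsf C_{n,\theta}\lesssim M_{\mu,r}(\theta)^{p/r}\rho_n$ when $r\ne2p$, with the homogeneity in $M_{\mu,r}(\theta)$ following from scaling.

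For $\mathsf A_{n,\theta}$, apply Hölder's inequality in $u$ to separate the weight from the quantile error. The weight satisfies $\||F_\theta-G_\theta|^{p-1}\|_{L^{r/(p-1)}}\lesssim M_{\mu,r}^{(p-1)/r}+M_{\nu,r}^{(p-1)/r}$. What remains is $\mathbb E\|\widehat F_\theta-F_\theta\|_{L^{s}(du)}$ with $s=r/(r-p+1)$. I would control this via the CDF representation $\int|\widehat F_n(x)-F(x)|\,|x|^{\cdot}\,dx$, localized to dyadic shells $\{2^k\le|x|<2^{k+1}\}$. On each shell, use $\mathbb E|\widehat F_n-F|\le\min\{2F(1-F),\sqrt{F(1-F)/n}\}$ together with Markov's tail bound $P(|X|>t)\le M_r t^{-r}$. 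Summing over shells (peeling) gives $\rho_n$ times the appropriate power of $M_r(\theta)$, with no logarithm when $r\neq2p$: the series is geometric on both sides of the crossover scale $t_n\asymp n^{1/r}$, and it only becomes flat when $r=2p$.

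Finally, integrate over $\theta$. The weights produce products such as $M_{\mu,r}(\theta)^{(p-1)/r}M_{\mu,r}(\theta)^{1/r}=M_{\mu,r}(\theta)^{p/r}$, plus cross terms $M_{\nu,r}^{(p-1)/r}M_{\mu,r}^{1/r}$. Young's inequality bounds each cross term by $M_{\mu,r}^{p/r}+M_{\nu,r}^{p/r}$, whose $\sigma$-integral is at most $2b$. This gives $\mathcal R^{\mathrm{plug}}\lesssim b(\rho_n+\rho_m)\lesssim b\rho_n$, using $n\le m$. The case $p=1$ is simpler: the reverse triangle inequality reduces it directly to the one-sample terms.

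\textbf{Lower bound.} Since $n\le m$, it suffices to perturb $\mu$ only and keep $\nu$ fixed, so the second sample carries no information; I would use Le Cam's two-point method. The perturbations only require control of directional moments, so I would take all measures supported on a line $\mathbb R e_1$. Then $\langle X,\theta\rangle=\theta_1X_1$ and $\int M^{p/r}d\sigma=c_{d,p}(\mathbb E|X_1|^r)^{p/r}$, so the problem reduces to one dimension up to constants.
\begin{itemize}
\item For $r>2p$: take $\mu_{0,1}=(1\pm\epsilon)/2\,\delta_{a}+(1\mp\epsilon)/2\,\delta_{-a}$ and $\nu=\delta_{c}$ with $c\ne\pm a$. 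Then $\operatorname{SW}_p^p$ differs by $\asymp\epsilon$, while $\mathrm{KL}(\mu_0^{\otimes n}\|\mu_1^{\otimes n})\lesssim n\epsilon^2$. Choosing $\epsilon=n^{-1/2}$ gives the rate $n^{-1/2}$.
\item For $p<r<2p$: take $\mu_j=(1-\epsilon_j)\delta_0+\epsilon_j\delta_{L}$ with $\epsilon_0=\epsilon$, $\epsilon_1=2\epsilon$, and $\epsilon L^r\asymp1$ so that both lie in the class, and $\nu=\delta_{z}$ for some small $z\ne0$ so that $\mu_j\neq\nu$. The functional differs by $\asymp\epsilon L^p=\epsilon^{1-p/r}$. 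With $\epsilon=1/n$, the two $n$-fold products have total variation bounded away from $1$, which gives the rate $n^{-(r-p)/r}$.
\end{itemize}

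\textbf{Main obstacle.} I expect the hardest step to be the log-free peeling bound for the weighted quantile term $\mathsf A_{n,\theta}$. In particular, the Hölder exponents must be matched so that the resulting $\theta$-dependence integrates against the class constraint $\int M^{p/r}d\sigma\le b$. That is where Manole et al. lost a logarithm, and where I would spend most of the effort, working directly with the shell decomposition rather than a uniform quantile concentration bound.
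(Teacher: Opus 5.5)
\noindent Your decomposition, the bounds for $\mathsf C_{n,\theta},\mathsf D_{m,\theta}$, the case $p=1$, the $\theta$-integration via Young's inequality and the two Le Cam constructions all follow the paper. The gap is in the step you yourself flag as the main obstacle. Applying Hölder's inequality globally in $u$ to $\mathsf A_{n,\theta}$ destroys the rate, and no later argument can recover it.

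After Hölder you must bound $\mathbb E\|F_{n,\theta}^{-1}-F_\theta^{-1}\|_{L^s(du)}=\mathbb E\operatorname{W}_s(\Pi^\theta_\#\widehat\mu_n,\Pi^\theta_\#\mu)$ with $s=r/(r-p+1)$, and $s>1$ whenever $p>1$. Take $\mu=\tfrac12\delta_0+\tfrac12\delta_{e_1}$, which lies in the class after rescaling. The projected quantile error has modulus $|\theta_1|$ on an interval of length $|\widehat p-\tfrac12|$, where $n\widehat p\sim\mathrm{Bin}(n,\tfrac12)$. Hence this quantity is $|\theta_1|\,\mathbb E|\widehat p-\tfrac12|^{1/s}\asymp|\theta_1|\,n^{-(r-p+1)/(2r)}$. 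That is polynomially slower than $n^{-1/2}$ for every $r>2p$, and slower than $n^{-(r-p)/r}$ whenever $p+1<r<2p$. Yet for this law (with, e.g., $\nu=\mu$) the weight is bounded, so the true $\mathsf A_{n,\theta}$ is $O(n^{-1/2})$: the loss is created by Hölder itself.

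Your route does work for $p<r<p+1$, since there $(\mathbb E\operatorname{W}_s^s)^{1/s}\lesssim M_{\mu,r}(\theta)^{1/r}n^{-(r-s)/(rs)}$ and $(r-s)/(rs)=(r-p)/r$. At $r=p+1$ the bound it yields carries a logarithm. Your heuristic that the shell series is flat only at $r=2p$ describes $\int|x|^{p-1}\,\mathbb E|F_{n,\theta}(x)-F_\theta(x)|\,dx$, which is what the same-sample weight $|F_\theta^{-1}|^{p-1}$ produces if you keep it. The unweighted $\operatorname{W}_s^s$ series is flat at $r=2s$, i.e.\ at $r=p+1$. The cross weight $|G_\theta^{-1}(u)|^{p-1}$ is not a function of $x$ in the CDF representation of the $\mu$-sample error at all.

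\noindent The missing idea is to keep the quantile error in $L^1$ and localize it to where the weight is large. Set $K=M_{\nu,r}(\theta)^{1/r}$ and peel $(0,1)$ into $\{|G_\theta^{-1}|<K\}$ and the layers $\{2^kK\le|G_\theta^{-1}|<2^{k+1}K\}$. By monotonicity of $G_\theta^{-1}$ and Markov's inequality, the $k$-th layer lies in $(0,a_k]\cup[b_k,1)$ with $a_k,1-b_k\le2^{-kr}$.

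You then need an endpoint-localized estimate $\mathbb E\int_0^\alpha|F_{n,\theta}^{-1}-F_\theta^{-1}|\,du\lesssim_r M_{\mu,r}(\theta)^{1/r}\alpha^{1/2-1/r}n^{-1/2}$ for $r>2$, together with its upper-tail analogue. The paper derives it from $\int_0^\alpha|F_{n,\theta}^{-1}-F_\theta^{-1}|\,du=\int_{\mathbb R}|\min\{F_{n,\theta},\alpha\}-\min\{F_\theta,\alpha\}|\,dx$, using the binomial variance, a Chernoff bound and Chebyshev's inequality on CDF-level sets whose lengths are controlled by Markov. Each layer then contributes $\lesssim n^{-1/2}2^{k(p-r/2)}$, which is summable exactly when $r>2p$.

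For $p<r<2p$ three further steps are needed:
\begin{itemize}
\item truncate the peeling at $2^{J_n}\asymp n^{1/r}$;
\item bound the far tail with moments only, using $\mathbb E\max_i|\langle X_i,\theta\rangle|^r\le nM_{\mu,r}(\theta)$;
\item treat $1<r\le2$ separately, since the localized estimate is unavailable there: use the global $\operatorname{W}_1$ bound, and at $r=2$ Hölder with an exponent $q_0$ close to $1$.
\end{itemize}

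\noindent Two small fixes to your lower bounds:
\begin{itemize}
\item In the $r>2p$ construction the gap in the functional is $\epsilon\kappa_{p,d}\bigl||a-c|^p-|a+c|^p\bigr|$, so you need $c\neq0$, not $c\neq\pm a$.
\item $\epsilon$ and $\epsilon_1=2\epsilon$ should be small constant multiples of $n^{-1/2}$ and $n^{-1}$ respectively, so that the measures are well defined for $n=1$ and the divergence bound keeps the total variation away from $1$.
\end{itemize}
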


Theorem~\ref{thm:minimax-noncritical} identifies two statistically
distinct regimes away from the moment orders $r=p$ and
$r=2p$. When $p<r<2p$, the minimax risk converges to zero at
the nonparametric rate
$b\,n^{-(r-p)/r},$ whereas the parametric rate $b\,n^{-1/2}$ is attained when $r>2p$.

\begin{remark}[Distinct distributions]
\label{rem:distinct-distributions}
The lower bound in the regime $p<r<2p$ remains valid when the
parameter space is restricted to pairs of distinct distributions. Indeed, the testing pairs constructed in Subsection~\ref{sec:lowerboundp<r<2p} satisfy $\mu_0\neq\nu_0$ and $\mu_1\neq\nu_1$. Consequently,
\begin{equation*}
\inf_{\widehat T}
\sup_{\substack{\mu,\nu\in\mathcal K_{p,r}(b)\\\mu\neq\nu
}}\mathbb E_{\mu,\nu}\left|
\widehat T-\operatorname{SW}_p^p(\mu,\nu)
\right|\asymp_{p,r}bn^{-(r-p)/r}.
\end{equation*}
Thus, the same minimax rate persists even when the parameter space is
restricted to pairs of distinct distributions, confirming the
expectation stated by \citet{manole2022minimax} that excluding the
diagonal $\mu=\nu$ does not restore the parametric rate.
\end{remark}

\begin{remark}[The minimal moment order]
\label{rem:minimal-moment-order}
The construction used to establish the minimax lower bound in the
regime $p<r<2p$ remains valid at the endpoint $r=p$. Consequently,
$$
\mathcal R_{n,m}^{\star} \left( \mathcal K_{p,p}(b) \right) \gtrsim_p b.
$$
Thus, the minimax risk does not vanish as the sample sizes increase,
and no estimator is uniformly consistent over
$\mathcal K_{p,p}(b)$.
\end{remark}

We next turn to the critical moment order $r=2p$. 
% In this regime, the
% % worst-case risk of the plug-in estimator can be characterized up to
% % multiplicative constants, while the exact minimax rate remains open.
\begin{proposition}[Critical moment order]
\label{prop:r=2p}
Let $p\ge1$, $b>0$. Then
\begin{equation}
\label{eq:critical-plugin-risk}
\mathcal R_{n,m}^{\mathrm{plug}}
\left(\mathcal K_{p,2p}(b)
\right)\asymp_pb\sqrt{\frac{\log(n+1)}{n}}.
\end{equation}
Moreover, 
\begin{equation}
\label{eq:critical-minimax-lower}
\mathcal R_{n,m}^{\star}
\left(\mathcal K_{p,2p}(b)\right)
\gtrsim_p b \frac{\left[\log(n+1)\right]^{1/4}
}{\sqrt n}.
\end{equation}
\end{proposition}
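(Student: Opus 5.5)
We prove the three bounds separately: the plug-in upper bound in \eqref{eq:critical-plugin-risk}, the matching lower bound for the plug-in estimator, and the minimax lower bound \eqref{eq:critical-minimax-lower}.

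\textbf{Upper bound.} We use the decomposition of Lemma~\ref{lem:decomposition}. For each direction $\theta$, it bounds $|\operatorname{W}_p^p(\Pi^\theta_\#\widehat\mu_n,\Pi^\theta_\#\widehat\nu_m)-\operatorname{W}_p^p(\Pi^\theta_\#\mu,\Pi^\theta_\#\nu)|$ by four terms:
\begin{itemize}
\item one-sample Wasserstein terms $\mathsf C_{n,\theta}$ and $\mathsf D_{m,\theta}$;
\item weighted quantile terms $\mathsf A_{n,\theta}$ and $\mathsf B_{m,\theta}$.
\end{itemize}
Each term depends on a single sample. For the one-sample terms we invoke Lemma~\ref{lem:projected-empirical-ratev2}. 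Applied to the projected law, it gives $\mathbb E\operatorname{W}_p^p(\Pi^\theta_\#\widehat\mu_n,\Pi^\theta_\#\mu)\lesssim_p M_{\mu,2p}(\theta)^{1/2}\sqrt{\log(n+1)/n}$. The factor $M_{\mu,2p}(\theta)^{1/2}$ is exactly the integrand $M_{\mu,2p}(\theta)^{p/(2p)}$ in the definition of $\mathcal K_{p,2p}(b)$. Integrating over $\sigma$ therefore yields $b\sqrt{\log(n+1)/n}$, and similarly with $m\ge n$.

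For the weighted quantile terms we reuse the localized quantile and peeling argument from the noncritical upper bound. At $r=2p$, the peeling sum over dyadic shells of $u\in(0,1)$ near the endpoints no longer converges geometrically. Instead, each of the roughly $\log n$ shells contributes a comparable amount. Combining the shells through Cauchy--Schwarz should produce $\sqrt{\log n}$ rather than $\log n$. Homogeneity in $b$ follows by rescaling.

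\textbf{Plug-in lower bound.} We take $\nu=\mu$ and reduce to one dimension. Choose $\mu$ supported on a line spanned by a vector $e$, so that the projection of $\mu$ in direction $\theta$ is a dilation by $|\langle\theta,e\rangle|$. Then the plug-in risk equals a constant times $\mathbb E\operatorname{W}_p^p(\widehat\mu_n,\widehat\mu'_m)$ for a one-dimensional law. Since $m\ge n$, this quantity is bounded below by $\mathbb E\operatorname{W}_p^p(\widehat\mu_n,\mu)$ up to constants, by convexity/Jensen after conditioning on the first sample. We then apply Lemma~\ref{lem:onedimensionalcaser=2p}, which supplies an $n$-dependent law with bounded $2p$-th moment whose empirical error is of order $\sqrt{\log(n+1)/n}$. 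A typical such law has tail $P(|X|>t)\asymp t^{-2p}(\log t)^{-1}$, truncated at scale $n^{1/(2p)}$. Finally, we normalize $\mu$ so that it lies in $\mathcal K_{p,2p}(b)$.

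\textbf{Minimax lower bound.} We use Le Cam's two-point method, adapting the construction of Subsection~\ref{sec:lowerboundp<r<2p}. Fix $\nu$ as a point mass at $0$, or a fixed off-diagonal law. Build $\mu_0$ and $\mu_1$ as mixtures with a bulk part and heavy atoms along a fixed line. The goal is three properties:
\begin{itemize}
\item $\operatorname{KL}(\mu_0^{\otimes n},\mu_1^{\otimes n})\lesssim 1$ (equivalently, bounded Hellinger distance);
\item the $2p$-th projected moments stay bounded;
\item the separation $|\operatorname{SW}_p^p(\mu_0,\nu)-\operatorname{SW}_p^p(\mu_1,\nu)|$ is as large as possible.
\end{itemize}
A single atom at position $x$ with mass perturbation $\varepsilon\asymp 1/\sqrt{n}$ relative to a base mass $q$ yields separation about $x^p\sqrt{q/n}$. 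The moment constraint is $qx^{2p}\lesssim b$, which is only critical, i.e.\ gives just $1/\sqrt n$. To gain $[\log n]^{1/4}$, we spread the perturbation over $K\asymp\log n$ dyadic scales $x_k=2^k$ with masses $q_k\asymp x_k^{-2p}/K$, keeping the total moment bounded. We perturb each scale by $\pm\sqrt{q_k/(nK)}$ with aligned signs, so that the chi-square divergence stays $O(1/n)$. The separations add to $\sum_k x_k^p\sqrt{q_k/(nK)}\asymp K\cdot K^{-1}/\sqrt n$, which is not enough as stated. Balancing $K$ against the chi-square budget more carefully should give $K^{1/4}$ through a fuzzy-hypotheses or moment-matching variant. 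Tuning this construction is the main obstacle, and we expect that the logarithmic gap arises precisely here.
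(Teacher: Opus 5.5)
Your bounds for $\mathsf C_{n,\theta},\mathsf D_{m,\theta}$ via Lemma~\ref{lem:projected-empirical-ratev2} agree with the paper. So does your plug-in lower bound (diagonal pair supported on a line, then Lemma~\ref{lem:onedimensionalcaser=2p}), with two side remarks. First, that lemma already lower-bounds the two-sample quantity, so the Jensen reduction is unnecessary. Second, a tail $t^{-2p}(\log t)^{-1}$ cut at $n^{1/(2p)}$ has $2p$-th moment of order $\log\log n$, so it is not admissible. The lemma instead puts the same budget $B^2/L$ on each of $L\asymp\log n$ dyadic scales, i.e.\ a tail $\asymp t^{-2p}/\log n$. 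The other two parts have genuine gaps.

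\textbf{Weighted terms $\mathsf A_{n,\theta},\mathsf B_{m,\theta}$.} Reusing the noncritical peeling is exactly what fails at $r=2p$. With Proposition~\ref{prop:peelingtool} and Markov bounds on the layer masses, each layer is bounded only by $Cn^{-1/2}$, since the factor $2^{k(p-r/2)}$ equals $1$. The series therefore diverges, and truncating at $J_n$ leaves $J_n/\sqrt n$ plus a far-tail term $n^{1/(2p)}2^{-(p+1)J_n}$; no choice of $J_n$ makes both $O(\sqrt{\log(n+1)/n})$.

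Cauchy--Schwarz bounds a sum of $J$ terms $c_j$ by $\sqrt J\,(\sum_jc_j^2)^{1/2}$. This helps only if $\sum_jc_j^2\lesssim1/n$, which fails if, as you say, all shells contribute comparably. Even inserting the true layer masses into Proposition~\ref{prop:peelingtool} gives only $(\log n)^{(p+1)/(2p)}/\sqrt n$, because its factor $\alpha^{1/2-1/r}=\alpha^{(p-1)/(2p)}$ has exponent below $1/2$.

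The missing idea is the content of Lemma~\ref{lem:endpointweightedquantile}. Peel by $H_\theta=\max\{|F_\theta^{-1}|,|G_\theta^{-1}|\}$, so that one family of layers controls where either quantile is large. Set $M_\theta=M_{\mu,2p}(\theta)+M_{\nu,2p}(\theta)$, $L_k=\{H_\theta\ge2^kM_\theta^{1/(2p)}\}$, $S_k=\lambda(L_k)$ and $m_k=\min\{S_k,\sqrt{S_k/n}\}$. Then prove the sharper estimate $\mathbb E\int_{L_k}|F_{n,\theta}^{-1}-F_\theta^{-1}|\,du\lesssim M_\theta^{1/(2p)}\sum_{j\ge k}2^jm_j$, by also cutting the $x$-axis into dyadic shells on which the projected tail is at most $S_j$. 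Summing over layers gives $M_\theta^{1/2}\sum_j2^{jp}m_j$. The layer-cake bound $\sum_j2^{2pj}S_j\lesssim_p1$, which follows from $\int_0^1H_\theta^{2p}\le M_\theta$, is what makes Cauchy--Schwarz yield $\sqrt{(J_n+1)/n}$ for $j\le J_n$. Beyond that, $m_j\le S_j$ gives $2^{-pJ_n}$ for $j>J_n$, and $J_n\asymp\log(n+1)/(2p)$ concludes.

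\textbf{Minimax lower bound.} Here the proposal stops short, and no rebalancing of a single alternative can close the gap. For your line-supported hypotheses with $\nu=\delta_0$, $T_1-T_0=\kappa_{p,d}\int|x|^p\,d(\mu_1-\mu_0)$ is linear. Cauchy--Schwarz against the Hellinger distance $h(\mu_0,\mu_1)$, together with the $2p$-th moment constraint, gives $|T_1-T_0|\lesssim b\,h(\mu_0,\mu_1)$. A nontrivial Le Cam bound needs $n\,h^2(\mu_0,\mu_1)\lesssim1$, so $b/\sqrt n$ is a hard ceiling, which is exactly what your computation found.

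The gain must come from a mixture of alternatives (Lemma~\ref{lem:twopointmixtureversion}). That requires the functional to be uniformly separated across the mixture, which a linear functional under independent random signs cannot deliver: the values straddle $T_0$. The paper therefore puts the null on the diagonal, $\nu_0=\mu_0$, with blocks of two atoms $a_ke_1<b_ke_1$ of mass $\omega_k$. Each alternative $\mu_\varepsilon$ moves mass $\varepsilon_k\gamma_k$ inside block $k$. Then $\operatorname{SW}_p^p(\mu_\varepsilon,\mu_0)=\kappa_{p,d}\sum_k\gamma_k(b_k-a_k)^p$ for every $\varepsilon\in\{-1,1\}^L$. The mixture's second moment is $\mathbb E_{\varepsilon,\varepsilon'}\bigl(1+4\sum_k\varepsilon_k\varepsilon_k'\gamma_k^2/\omega_k\bigr)^n\le\exp\bigl(8n^2\sum_k\gamma_k^4/\omega_k^2\bigr)$, which is fourth order in $\gamma_k$.

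This permits $\gamma_k\asymp(\omega_k/(n\sqrt L))^{1/2}$ instead of $(\omega_k/(nL))^{1/2}$. The paper takes $\omega_kb_k^{2p}\asymp b^2/L$, $a_k=\tfrac23b_k$, geometric $\omega_k\ge1/(n\sqrt L)$ (so that $\gamma_k\le\omega_k/2$) and $L\asymp\log n$. The separation is then $\asymp bL^{1/4}/\sqrt n$, which is the claimed bound.
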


The matching upper and lower bounds in
\eqref{eq:critical-plugin-risk} show that the logarithmic factor is
unavoidable for the plug-in estimator over the full projected moment
class. Combining \eqref{eq:critical-minimax-lower} with
$
\mathcal R_{n,m}^{\star}\le \mathcal R_{n,m}^{\mathrm{plug}}
$
gives
\begin{equation*}
\label{eq:critical-minimax-gap}
b\frac{\left[\log(n+1)\right]^{1/4}
}{\sqrt n
}\lesssim_p \mathcal R_{n,m}^{\star}\left(
\mathcal K_{p,2p}(b)\right) \lesssim_pb\sqrt{\frac{\log(n+1)}{n}
}.
\end{equation*}
Therefore, the worst-case risk of the plug-in estimator is within a
factor of order $\left[\log(n+1)\right]^{1/4}$ of the minimax risk at the
critical moment order. Equivalently,
\begin{equation*}
\mathcal R_{n,m}^{\mathrm{plug}}
\left(\mathcal K_{p,2p}(b)
\right)\lesssim_p \left[\log(n+1)\right]^{1/4}\mathcal R_{n,m}^{\star}
\left(\mathcal K_{p,2p}(b)\right).
\end{equation*}
The exact minimax rate when $r=2p$ remains open.

Taken together, these results yield the following phase diagram. At the
minimal moment order $r=p$, the minimax risk satisfies
$\mathcal R_{n,m}^{\star}(\mathcal K_{p,p}(b))\gtrsim_p b$
for all sample sizes. Hence, it cannot converge to zero as $n,m\to\infty$,
and no estimator is uniformly consistent over $\mathcal K_{p,p}(b)$.
When $p<r<2p$, the minimax risk is of order
$b n^{-(r-p)/r}$, whereas the parametric order $b n^{-1/2}$ is attained
when $r>2p$. In both noncritical estimable regimes, the plug-in estimator
is minimax rate-optimal. At the critical moment order $r=2p$, the worst-case risk of the
plug-in estimator is of order $b\sqrt{\log(n+1)/n}$ and is within a
factor of order $\left[\log(n+1)\right]^{1/4}$ of the minimax risk.
The exact minimax rate remains open.

In the next section, we present the main arguments leading to the upper bounds for the plug-in risk
$ \mathcal R_{n,m}^{\mathrm{plug}}\left(\mathcal K_{p,r}(b)\right)
$
in the different moment regimes. Since the plug-in estimator is one
admissible estimation procedure, these bounds also yield upper bounds
for the corresponding minimax risk.

\section{Upper-bound analysis}\label{sec:proofsupperbound}
In this section, we present the main arguments underlying the upper
bounds in Theorem~\ref{thm:minimax-noncritical} and
Proposition~\ref{prop:r=2p}. We begin by reducing the global Sliced Wasserstein error to a family of one-dimensional projected
errors.

Fix $\mu,\nu\in\mathcal K_{p,r}(b)$. By definition,
$$
\int_{\mathbb S^{d-1}}M_{\mu,r}(\theta)^{p/r}\,d\sigma(\theta)\le b,
\qquad
\int_{\mathbb S^{d-1}}M_{\nu,r}(\theta)^{p/r}\,d\sigma(\theta)\le b.
$$
Therefore, by the definition of
$\mathcal K_{p,r}(b)$, if we define
\begin{equation}\label{eq:gooddirections}
\mathcal{G}\coloneqq \left\{ \theta\in\mathbb S^{d-1}:
M_{\mu,r}(\theta)<\infty \ \text{and}\ M_{\nu,r}(\theta)<\infty \right\},
\end{equation}
then $\sigma(\mathbb{S}^{d-1} \setminus \mathcal{G})=0$. As a result, we have
\begin{align*}
\mathbb{E}\left|
\operatorname{SW}_p^p(\widehat{\mu}_n,\widehat{\nu}_m)-\operatorname{SW}_p^p(\mu,\nu)
\right| &= \mathbb{E}
\left|\int_{\mathbb{S}^{d-1}} \left[ \operatorname{W}_p^p
\left( \Pi^\theta_{\#}\widehat{\mu}_n,
\Pi^\theta_{\#}\widehat{\nu}_m\right)
-\operatorname{W}_p^p\left(\Pi^\theta_{\#}\mu,\Pi^\theta_{\#}\nu\right)\right]
d\sigma(\theta)\right| \\
&\le\int_{\mathbb{S}^{d-1}}\mathbb{E}
\left|\operatorname{W}_p^p\left(\Pi^\theta_{\#}\widehat{\mu}_n,\Pi^\theta_{\#}\widehat{\nu}_m
\right)-\operatorname{W}_p^p
\left(\Pi^\theta_{\#}\mu,
\Pi^\theta_{\#}\nu\right)\right|
d\sigma(\theta)\\
& = \int_{\mathcal{G}} \mathbb{E}\left|\operatorname{W}_p^p
\left(\Pi^\theta_{\#}\widehat{\mu}_n,\Pi^\theta_{\#}\widehat{\nu}_m\right)-\operatorname{W}_p^p\left(\Pi^\theta_{\#}\mu,\Pi^\theta_{\#}\nu\right)\right|d\sigma(\theta).
\end{align*}
This reduces the global problem to deriving suitable directional bounds.
Throughout this section, for $\theta\in \mathcal{G}$, let $F_\theta$ and $G_\theta$ denote the distribution functions of $\Pi^\theta_{\#}\mu$ and
$\Pi^\theta_{\#}\nu$, respectively, and let $F_{n,\theta}$ and $G_{m,\theta}$ denote the corresponding empirical distribution functions.
We denote their generalized inverse functions by $F_\theta^{-1}$,
$G_\theta^{-1}$, $F_{n,\theta}^{-1}$, and $G_{m,\theta}^{-1}$.
We fix $\theta\in \mathcal{G}$ when deriving the directional bounds below, which
will be expressed in terms of $M_{\mu,r}(\theta)$ and $M_{\nu,r}(\theta)$. These bounds can then be integrated over $\theta$
and controlled using the defining constraints of $\mathcal K_{p,r}(b)$. We first establish the auxiliary estimates needed for this purpose.
\subsection{Technical lemmas}\label{sec:technicallemmas}
The auxiliary results in this subsection serve two purposes. First,
we record a projected empirical measure estimation bound under the powered Wasserstein loss, which
already yields the desired upper bounds when $p=1$. Second, when
$p>1$, we decompose the projected two-sample error into unweighted
empirical terms, which are controlled by the empirical measure estimation bound, and
weighted quantile terms, which require a separate analysis.

We begin with an empirical measure estimation bound that is essentially the
one-dimensional specialization of
\citet[Theorem~1]{fournier2015rate}. Since the critical case
$s=2q$ is not stated explicitly there, we treat it separately.
A complete proof is deferred to
Appendix~\ref{appendix:proofofonesamplebounds}.
\begin{lemma}\label{lem:projected-empirical-rate}
Let $q\ge 1$ and $s>q$. Let $Q$ be a probability measure on $\mathbb{R}^d$ and $\widehat{Q}_n$ be the empirical measure associated with $n$ i.i.d. samples from $Q$. Fix $\theta\in \mathbb{S}^{d-1}$ such that
$M_{Q,s}(\theta)<\infty$. Then,
$$
\mathbb E\operatorname{W}_q^q\left( \Pi^\theta_{\#}\widehat{Q}_n,\Pi^\theta_{\#}Q\right)\lesssim_{q,s}\left(M_{Q,s}(\theta)\right)^{q/s}
\begin{cases}
n^{-(s-q)/s}, & q<s<2q,\\
\log (n+1)n^{-1/2}, &s=2q,\\
n^{-1/2}, & s>2q.
\end{cases}
$$
\end{lemma}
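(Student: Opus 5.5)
Everything takes place on the line, so I will reduce to a one-dimensional problem. Let $Z=\langle X,\theta\rangle$ with $X\sim Q$, let $F$ be its distribution function and $F_n$ the empirical one, and set $M\coloneqq M_{Q,s}(\theta)<\infty$. The case $M=0$ is trivial, and by the scaling $Z\mapsto Z/M^{1/s}$ (which multiplies $\operatorname{W}_q^q$ by $M^{-q/s}$) I can assume $M=1$. The plan then follows the dyadic-annuli argument of \citet[Theorem~1]{fournier2015rate}, specialised to $d=1$. The critical case $s=2q$ is carried along by the same computation.

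\textbf{Step 1 (a partition bound).} Let $\alpha,\beta$ be probability measures on $\mathbb R$. Put $B_0=[-1,1]$ and $B_k=\{2^{k-1}<|x|\le 2^k\}$ for $k\ge1$. Each $B_k$ is split into $2^{\ell}$ intervals of equal length at level $\ell\ge0$, giving the family $\mathcal P_{k,\ell}$. I will use the one-dimensional analogue of \citet[Lemma~5]{fournier2015rate}:
\[
\operatorname{W}_q^q(\alpha,\beta)\lesssim_q \sum_{k\ge0}2^{qk}\sum_{\ell\ge0}2^{-q\ell}\sum_{I\in\mathcal P_{k,\ell}}\big|\alpha(B_k)\,\alpha_{|B_k}(I)-\beta(B_k)\,\beta_{|B_k}(I)\big|,
\]
written in the normalised form used there. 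The proof is the usual one: transport mass level by level inside each annulus, then move the leftover mass across annuli to the origin. I will only sketch it, since it is exactly their lemma with $d=1$.

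\textbf{Step 2 (expectation of one term).} Apply Step 1 with $\alpha=\Pi^\theta_{\#}\widehat Q_n$ and $\beta=\Pi^\theta_{\#}Q$. For any Borel set $A$, $n\,\alpha(A)\sim\mathrm{Bin}(n,\beta(A))$, so $\mathbb E|\alpha(A)-\beta(A)|\le \min\{2\beta(A),\sqrt{\beta(A)/n}\}$. Summing over the $2^\ell$ cells of $\mathcal P_{k,\ell}$ and using Cauchy--Schwarz gives a bound for level $\ell$ of annulus $k$ of order $\min\{\beta(B_k),\,2^{\ell/2}\sqrt{\beta(B_k)/n}\}$. Summing over $\ell$ with the weight $2^{-q\ell}$ converges geometrically because $q\ge1>1/2$, leaving $\min\{\beta(B_k),\sqrt{\beta(B_k)/n}\}$ per annulus. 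Markov's inequality gives $\beta(B_k)\le 2^{-s(k-1)}$, so
\[
\mathbb E\operatorname{W}_q^q(\alpha,\beta)\lesssim_{q,s}\sum_{k\ge0}2^{qk}\min\big\{2^{-sk},\,2^{-sk/2}n^{-1/2}\big\}.
\]

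\textbf{Step 3 (summation; the only delicate point).} The crossover index is $k_0$ with $2^{sk_0}\asymp n$. The three regimes come out as follows.
\begin{itemize}
\item If $s>2q$, the series $\sum_k 2^{(q-s/2)k}n^{-1/2}$ converges, giving $n^{-1/2}$.
\item If $q<s<2q$, the sum for $k\le k_0$ is dominated by its last term, $2^{(q-s/2)k_0}n^{-1/2}$, and the sum for $k>k_0$ by its first term, $2^{(q-s)k_0}$. Both equal $n^{-(s-q)/s}$.
\item If $s=2q$, each of the roughly $k_0\asymp \log(n+1)/s$ terms with $k\le k_0$ contributes $n^{-1/2}$, and the tail contributes $2^{-qk_0}\asymp n^{-1/2}$. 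The total is $\log(n+1)\,n^{-1/2}$.
\end{itemize}
Undoing the normalisation restores the factor $M_{Q,s}(\theta)^{q/s}$.

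The main obstacle is Step 1. One must make sure the mass discrepancy between annuli is charged at cost $2^{qk}$ rather than something larger, which requires transporting to the origin at distance about $2^k$. One must also handle the fact that $\operatorname{W}_q^q$ is not itself a metric; this is covered by the convexity inequality $|x+y|^q\le 2^{q-1}(|x|^q+|y|^q)$. As an alternative to Step 1 in one dimension, I could instead use the bound $\operatorname{W}_q^q\le C_q\int |x|^{q-1}|F_n(x)-F(x)|\,dx$, obtained by telescoping the quantile representation. Combined with $\mathbb E|F_n-F|\le\min\{2F(1-F),\sqrt{F(1-F)/n}\}$ and the tail bound $F(1-F)(x)\le |x|^{-s}$, this produces the same dyadic sums. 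However, that inequality fails for $q>1$ (for example, for two point masses the left side is $|a-b|^q$ while the integral is only of order $|b|^{q-1}|a-b|$), so I will rely on the partition route.
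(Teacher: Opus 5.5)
Your proof is correct and follows the paper's route. The paper cites \citet[Theorem~1]{fournier2015rate} in dimension one for $s\neq 2q$. For $s=2q$ it takes from Step~2 of that proof the same dyadic sum $\sum_{k\ge0}2^{qk}\min\{2^{-sk},(2^{-sk}/n)^{1/2}\}$ that you derive in Steps~1--2, and then sums it as in your Step~3.

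Your closing remark is wrong, however. The inequality $\operatorname{W}_q^q(\alpha,\beta)\le q2^{q-1}\int_{\mathbb R}|x|^{q-1}|F_\alpha(x)-F_\beta(x)|\,dx$ holds for every $q\ge1$ \citep[Proposition~7.14]{bobkov2019one}. It follows from the quantile representation and Tonelli's theorem once you note that $(b-a)^q\le q2^{q-1}\int_a^b|x|^{q-1}\,dx$ for $a<b$. In particular, for $\delta_a,\delta_b$ the integral is $\int_a^b|x|^{q-1}dx\ge 2^{1-q}q^{-1}|b-a|^q$. Your estimate of it by $|b|^{q-1}|a-b|$ is only valid when $|a-b|\lesssim|b|$, and in that case it dominates $|a-b|^q$ anyway.

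This does not affect the partition route you actually use. But the route you discarded is exactly the one the paper uses in Lemma~\ref{lem:projected-empirical-ratev2}, where it sharpens the critical rate to $\sqrt{\log(n+1)/n}$.
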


Lemma~\ref{lem:projected-empirical-rate} is already sufficient to prove the upper bound of Theorem~\ref{thm:minimax-noncritical} when $p=1$. Indeed, by the reverse triangle inequality for $\operatorname{W}_1$,
$$
\left|\operatorname W_1\left(\Pi^\theta_{\#}\widehat\mu_n,\Pi^\theta_{\#}\widehat\nu_m\right)-\operatorname W_1\left(\Pi^\theta_{\#}\mu,\Pi^\theta_{\#}\nu\right)\right|\le\operatorname W_1\left(\Pi^\theta_{\#}\widehat\mu_n,\Pi^\theta_{\#}\mu\right)+\operatorname W_1\left(\Pi^\theta_{\#}\widehat\nu_m,\Pi^\theta_{\#}\nu
\right).$$
Therefore, applying Lemma~\ref{lem:projected-empirical-rate} with $q=1$ and $s=r$ to each sample, integrating over $\theta$, using the defining
constraints of $\mathcal K_{1,r}(b)$, and recalling that $n\le m$ yield the desired bounds for $p=1$. For $p>1$, however, the reverse triangle inequality for $\operatorname{W}_p$ does not directly control the difference of the powered Wasserstein costs at the desired rate. We therefore use the following decomposition.
\begin{lemma}\label{lem:decomposition}
Let $p>1$. For the fixed $\mu,\nu\in\mathcal K_{p,r}(b)$ and
$\theta\in\mathcal G$, where $\mathcal K_{p,r}(b)$ and $\mathcal G$ are defined in Eqs.~\eqref{sec1:eqclass} and
\eqref{eq:gooddirections}, respectively, we have
% Let $p>1$. Let $\mu,\nu \in \mathcal{K}_{p,r}(b)$, where $\mathcal{K}_{p,r}(b)$ is
% introduced in Eq.~\eqref{sec1:eqclass}. Let $\widehat{\mu}_n$
% and $\widehat{\nu}_m$ be the empirical measures associated with $n$ and $m$
% i.i.d. samples from $\mu$ and $\nu$, respectively.

% Fix $\theta\in G$, where $G$ is introduced in Eq.~\eqref{eq:gooddirections}. Denote by $F_\theta$ and $G_\theta$ the
% distribution functions of $\Pi^\theta_{\#}\mu$ and $\Pi^\theta_{\#}\nu$,
% respectively. Similarly, denote by $F_{n,\theta}$ and $G_{m,\theta}$ the
% empirical distribution functions of $\Pi^\theta_{\#}\widehat{\mu}_n$ and
% $\Pi^\theta_{\#}\widehat{\nu}_m$, respectively. We denote their generalized
% inverse functions by $F_\theta^{-1}$, $G_\theta^{-1}$,
% $F_{n,\theta}^{-1}$ and $G_{m,\theta}^{-1}$. Then
% $$
$$
\begin{aligned}
\mathbb{E}\left|\operatorname{W}_p^p\left(\Pi^\theta_{\#}\widehat{\mu}_n,\Pi^\theta_{\#}\widehat{\nu}_m\right)-\operatorname{W}_p^p\left(\Pi^\theta_{\#}\mu,\Pi^\theta_{\#}\nu\right)\right| \lesssim_p\left(\mathsf{A}_{n,\theta}+\mathsf{B}_{m,\theta}+\mathsf{C}_{n,\theta}+\mathsf{D}_{m,\theta}\right),
\end{aligned}
$$
where
$$\mathsf{A}_{n,\theta} \coloneqq \mathbb{E}\int_0^1\left(|F_\theta^{-1}(u)|^{p-1}+
|G_\theta^{-1}(u)|^{p-1}\right)\left|F_{n,\theta}^{-1}(u)-F_\theta^{-1}(u)\right|du,
$$
$$\mathsf{B}_{m,\theta}\coloneqq \mathbb{E}\int_0^1\left(|F_\theta^{-1}(u)|^{p-1}+|G_\theta^{-1}(u)|^{p-1}\right)\left|G_{m,\theta}^{-1}(u)-G_\theta^{-1}(u)\right|du,
$$
and
$$\mathsf{C}_{n,\theta}\coloneqq \mathbb{E}\int_0^1\left|F_{n,\theta}^{-1}(u)-F_\theta^{-1}(u)\right|^pdu,\qquad \mathsf{D}_{m,\theta}\coloneqq \mathbb{E}\int_0^1\left|G_{m,\theta}^{-1}(u)-G_\theta^{-1}(u)\right|^pdu.
$$
\end{lemma}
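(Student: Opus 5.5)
We prove Lemma~\ref{lem:decomposition} by a pointwise inequality on quantiles, applied under the integral over $u\in(0,1)$. By the quantile representation of the one-dimensional Wasserstein distance, and since quantile functions of pushforwards are the generalized inverses,
$$
\operatorname{W}_p^p\left(\Pi^\theta_{\#}\widehat{\mu}_n,\Pi^\theta_{\#}\widehat{\nu}_m\right)-\operatorname{W}_p^p\left(\Pi^\theta_{\#}\mu,\Pi^\theta_{\#}\nu\right)=\int_0^1\Bigl(|a_n(u)|^p-|a(u)|^p\Bigr)du,
$$
with $a(u)\coloneqq F_\theta^{-1}(u)-G_\theta^{-1}(u)$ and $a_n(u)\coloneqq F_{n,\theta}^{-1}(u)-G_{m,\theta}^{-1}(u)$. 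Moving the absolute value inside the integral and then taking expectations, it suffices to bound $\bigl||a_n|^p-|a|^p\bigr|$ pointwise.

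\textbf{Pointwise step.} The elementary inequality we use is
$$
\bigl||x|^p-|y|^p\bigr|\le p\,(|x|\vee|y|)^{p-1}|x-y|\lesssim_p |y|^{p-1}|x-y|+|x-y|^p,
$$
valid for $p>1$ and $x,y\in\mathbb R$. It follows from the mean value theorem together with $(|y|+|x-y|)^{p-1}\lesssim_p |y|^{p-1}+|x-y|^{p-1}$. We apply it with $x=a_n(u)$ and $y=a(u)$. Then
$$
|x-y|\le \Delta_F(u)+\Delta_G(u),\qquad \Delta_F\coloneqq|F_{n,\theta}^{-1}-F_\theta^{-1}|,\quad \Delta_G\coloneqq|G_{m,\theta}^{-1}-G_\theta^{-1}|.
$$
Also $|y|^{p-1}\lesssim_p |F_\theta^{-1}|^{p-1}+|G_\theta^{-1}|^{p-1}$, and $(\Delta_F+\Delta_G)^p\lesssim_p\Delta_F^p+\Delta_G^p$.

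Expanding the product, the terms $\bigl(|F_\theta^{-1}|^{p-1}+|G_\theta^{-1}|^{p-1}\bigr)\Delta_F$ integrate and take expectation to $\mathsf{A}_{n,\theta}$. The corresponding terms with $\Delta_G$ give $\mathsf{B}_{m,\theta}$. The terms $\Delta_F^p$ and $\Delta_G^p$ give $\mathsf{C}_{n,\theta}$ and $\mathsf{D}_{m,\theta}$, respectively.

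\textbf{Justification.} The only point needing care is that every integral is finite, so that the manipulations are legitimate. For $\theta\in\mathcal G$, the $r$-th projected moments are finite and $r\ge p$, so $F_\theta^{-1},G_\theta^{-1}\in L^p(0,1)$. The empirical quantiles are bounded step functions. Hence all integrands are integrable, and Tonelli's theorem lets us exchange expectation and integral for the nonnegative terms.

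No step is a real obstacle. The main point is to choose the form of the mean-value bound so that the weight involves only the population quantiles (not the empirical ones), which is exactly what produces the stated $\mathsf{A}$ and $\mathsf{B}$ terms.
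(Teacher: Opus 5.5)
Your proposal is correct and follows essentially the same route as the paper's proof. Both use the quantile representation and the mean-value inequality $\bigl||x+y|^p-|x|^p\bigr|\le p2^{p-1}\bigl(|x|^{p-1}|y|+|y|^p\bigr)$ with the weight placed on the population difference $F_\theta^{-1}-G_\theta^{-1}$, then split $|F_\theta^{-1}-G_\theta^{-1}|^{p-1}\lesssim_p|F_\theta^{-1}|^{p-1}+|G_\theta^{-1}|^{p-1}$ and bound the error by $\Delta_F+\Delta_G$ before integrating and taking expectations.
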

We defer the proof of this lemma to Appendix~\ref{appendix:proofoflemdecomposition}.

The decomposition in Lemma~\ref{lem:decomposition} is inspired by the first reduction in \citet[Appendix~D, Step~1]{manole2022minimax}. This decomposition is useful because the terms $\mathsf{C}_{n,\theta}$ and $\mathsf{D}_{m,\theta}$ can be controlled directly by Lemma~\ref{lem:projected-empirical-rate}. However, controlling the weighted quantile terms $\mathsf{A}_{n,\theta}$ and $\mathsf{B}_{m,\theta}$ is substantially more challenging and constitutes the main technical part of the proof.

To control these terms, we decompose $(0,1)$ into layers according to
the magnitude of the quantile weight. On each layer, the weight is
bounded by the corresponding level, reducing the problem to a
localized estimate for the unweighted quantile error near the
endpoints of $(0,1)$. The next proposition provides this estimate; its complete proof is given in Appendix~\ref{appendix:proofofprop:peelingtool}.
\begin{proposition}[Localized empirical quantile estimate]\label{prop:peelingtool}
Let $r>2$. For the fixed $\mu\in\mathcal K_{p,r}(b)$ and
$\theta\in\mathcal G$, we have, for every $0\le \alpha\le1$,
% Let $r>2$, and let $\mu\in\mathcal K_{p,r}(b)$, where $\mathcal K_{p,r}(b)$ is defined in Eq.~\eqref{sec1:eqclass}. Let $\widehat\mu_n$ be the empirical measure associated with $n$ i.i.d. samples from $\mu$.
% Fix $\theta\in G$, where $G$ is defined in Eq.~\eqref{eq:gooddirections}. Denote by $F_\theta$ the distribution function of $\Pi^\theta_{\#}\mu$, and by $F_{n,\theta}$ the empirical distribution function of $\Pi^\theta_{\#}\widehat\mu_n$. Let
% $M_{\mu,r}(\theta)\coloneqq \mathbb E_{X\sim\mu}\left[|\langle X,\theta\rangle|^r\right].
% $
% Then for every $0\le \alpha\le 1$,
$$
\mathbb E\int_0^\alpha\left|F_{n,\theta}^{-1}(u)-F_\theta^{-1}(u)\right|du \lesssim_r \frac{M_{\mu,r}(\theta)^{1/r}}{\sqrt n} \alpha^{1/2-1/r}.
$$
\end{proposition}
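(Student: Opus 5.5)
We work entirely with the one-dimensional law $\Pi^\theta_{\#}\mu$. Write $F=F_\theta$, $F_n=F_{n,\theta}$, and $M=M_{\mu,r}(\theta)<\infty$. The starting point is the classical identity
\[
\int_0^1|F_n^{-1}(u)-F^{-1}(u)|\,du=\int_{\mathbb R}|F_n(x)-F(x)|\,dx,
\]
which expresses the $L^1$ quantile error as the area between the graphs. We localize this identity to $u\in(0,\alpha)$. The region $\{(u,y):0<u<\alpha,\ y \text{ between } F_n^{-1}(u) \text{ and } F^{-1}(u)\}$ is contained in the set of $(u,x)$ with $x$ between the two quantiles at level $u$. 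Reading this set horizontally, for each $x$ the admissible $u$ lie between $F_n(x)$ and $F(x)$, up to endpoint conventions, and also satisfy $u<\alpha$. Hence
\[
\int_0^\alpha|F_n^{-1}-F^{-1}|\,du\le \int_{\mathbb R}|F_n(x)-F(x)|\,\mathbbm 1\{\min(F_n(x),F(x))<\alpha\}\,dx .
\]
We split the indicator into $\{F(x)<2\alpha\}$ and $\{F_n(x)<\alpha,\ F(x)\ge 2\alpha\}$; the second piece is a large-deviation event. The symmetric upper tail is handled identically, but the statement only needs the lower tail.

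For the main piece, Jensen gives $\mathbb E|F_n(x)-F(x)|\le \sqrt{F(x)(1-F(x))/n}\le\sqrt{F(x)/n}$. It therefore suffices to show
\[
\int_{\{F(x)<2\alpha\}}\sqrt{F(x)}\,dx\lesssim_r M^{1/r}\alpha^{1/2-1/r}.
\]
The set $\{F<2\alpha\}$ is a half-line $(-\infty,x_\alpha)$ with $x_\alpha=F^{-1}(2\alpha)$. On it we use Markov's inequality, $F(x)\le M|x|^{-r}$ for $x<0$, together with the trivial bound $F\le 2\alpha$. Set $t_\alpha=(M/2\alpha)^{1/r}$. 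On $|x|\le t_\alpha$ we use $\sqrt{2\alpha}$, which contributes $\sqrt{2\alpha}\,t_\alpha\asymp M^{1/r}\alpha^{1/2-1/r}$. On $|x|>t_\alpha$ we use $\sqrt{M}|x|^{-r/2}$, and the tail integral converges precisely because $r>2$, giving $\sqrt{M}\,t_\alpha^{1-r/2}\asymp M^{1/r}\alpha^{1/2-1/r}$. If $x_\alpha>0$, the portion $0\le x<x_\alpha$ is also bounded by $\sqrt{2\alpha}\,x_\alpha$. Markov on the positive side gives $1-2\alpha\le M x_\alpha^{-r}$ when $\alpha<1/4$, so $x_\alpha\lesssim M^{1/r}$, which is fine. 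The remaining case $\alpha\ge 1/4$ is covered by the global bound $\mathbb E\int_0^1|F_n^{-1}-F^{-1}|\lesssim M^{1/r}n^{-1/2}$, which follows from the same computation with $\alpha=1$, i.e.\ from the $r>2$ case of Lemma~\ref{lem:projected-empirical-rate} with $q=1$.

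The main obstacle is the deviation piece $\{F_n(x)<\alpha\le F(x)/2\}$. Here $|F_n-F|\le F(x)$, and a Chernoff bound gives $\mathbb P(F_n(x)<F(x)/2)\le e^{-cnF(x)}$. The contribution is thus at most $\int F(x)e^{-cnF(x)}\mathbbm 1\{F(x)\ge 2\alpha\}\,dx$. We plan to bound $F e^{-cnF}\lesssim \sqrt{F/n}$, using $\sqrt{nF}\,e^{-cnF}\lesssim 1$, which reduces this piece to the same type of integral as before, now over the whole line. Its restriction to $F(x)\ge2\alpha$ must still be localized. For that we will use that $F e^{-cnF}$ is also at most $e^{-c n\alpha}F$, and split according to whether $n\alpha\ge1$ or $n\alpha<1$. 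In the regime $n\alpha<1$ we instead use the crude bound $\int_0^\alpha|F_n^{-1}|+|F^{-1}|\,du$. The empirical quantiles there are the minimum order statistic, whose expected absolute value is controlled by $M^{1/r}n^{1/r}$, and then $\alpha M^{1/r}n^{1/r}\le M^{1/r}n^{-1/2}\alpha^{1/2-1/r}$ exactly when $n\alpha\le1$. We expect this case bookkeeping to be the delicate step; the rest is routine.
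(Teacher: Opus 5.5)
\textbf{There is a genuine gap in the deviation piece.} Your treatment of $\{F<2\alpha\}$ is correct: it is the paper's region $J_1$, integrated directly with Markov's inequality instead of over dyadic level sets. Your order-statistic bound for $n\alpha<1$ is also correct, once you add the population part $\int_0^\alpha|F^{-1}(u)|\,du\le\alpha^{1-1/r}M^{1/r}$ (by H\"older), which is admissible precisely because $n\alpha\le 1$.

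The set $\{F\ge 2\alpha\}$ is the half-line $[F^{-1}(2\alpha),+\infty)$. On it, the Chernoff bound $e^{-cnF(x)}$ does not vanish as $x\to+\infty$; it tends to $e^{-cn}>0$. Hence $\int F(x)e^{-cnF(x)}\mathbbm 1\{F(x)\ge 2\alpha\}\,dx=+\infty$. Both of your proposed follow-ups diverge as well:
\begin{itemize}
\item $\int\sqrt{F/n}\,dx$ over the whole line is infinite. Your earlier integral was finite only because it lived on a left half-line, where Markov gives $F(x)\le M|x|^{-r}$, and you had already discarded the factor $1-F$.
\item $e^{-cn\alpha}\int_{\{F\ge 2\alpha\}}F\,dx$ is infinite for the same reason.
\end{itemize}
No case distinction on $n\alpha$ can repair this, because the divergence comes from large $x$, not from the size of $\alpha$. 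For the same reason, the global $\operatorname{W}_1$ bound does not follow from ``the same computation with $\alpha=1$''; it is your citation of Lemma~\ref{lem:projected-empirical-rate} that makes that step correct.

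\textbf{The missing ingredient} is a bound on $\mathbb P(F_n(x)<\alpha)$ that decays like $1-F(x)$ in the right tail. The paper splits $\{F>2\alpha\}$ into two regions:
\begin{itemize}
\item $\{2\alpha<F\le 3/4\}$, whose length is $\lesssim M^{1/r}\alpha^{-1/r}$ by Markov, and where Chernoff is used.
\item $\{F>3/4\}$, where (for $\alpha<1/4$) Chebyshev gives $\mathbb P(F_n(x)\le\alpha)\le\mathbb P(|F_n(x)-F(x)|\ge 1/2)\le 4(1-F(x))/n$, and moreover $\int_{\{F>3/4\}}(1-F)\,dx\lesssim M^{1/r}$.
\end{itemize}
The paper also keeps the exact identity $\int_0^\alpha|F_n^{-1}-F^{-1}|\,du=\int_{\mathbb R}|\min\{F_n,\alpha\}-\min\{F,\alpha\}|\,dx$. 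Then on $\{F_n\le\alpha<F\}$ the integrand is at most $\alpha$ rather than $F$. The two regions contribute $M^{1/r}\alpha^{1-1/r}e^{-n\alpha/4}$ and $M^{1/r}\alpha/n$. Both are $\lesssim M^{1/r}n^{-1/2}\alpha^{1/2-1/r}$ for all $n$ and $\alpha$, so no split on $n\alpha$ is needed.

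If you prefer to keep your lossy factor, adding the Chebyshev step still closes your argument:
\begin{itemize}
\item The right tail then contributes $M^{1/r}/n$, which is within the target exactly when $n\alpha\ge 1$.
\item The middle region also works when $n\alpha\ge1$, since $\sup_{t\ge 2\alpha}te^{-cnt}\lesssim\sqrt{\alpha/n}$ there.
\item Your order-statistic bound covers $n\alpha<1$.
\end{itemize}
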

% The proof is deferred to Appendix~\ref{appendix:proofofprop:peelingtool}
\begin{proof}[Proof sketch.]
The case $\alpha=0$ is trivial. We may therefore assume that
$0<\alpha\le1$. Adapting the arguments used in \citet[Proposition~2.17]{santambrogio2015optimal} to the interval
$(0,\alpha)$ yields
\begin{equation*}
\mathbb E \int_0^\alpha \left|
F_{n,\theta}^{-1}(u)-F_\theta^{-1}(u)
\right|du = \int_{\mathbb R}
\mathbb E\left| \min\{F_{n,\theta}(x),\alpha\} - \min\{F_\theta(x),\alpha\}
\right|dx,
\end{equation*}

When $\alpha\ge1/4$, the desired estimate follows directly from the
global empirical measure estimation bound in $\operatorname{W}_1$. We may therefore assume
that $0<\alpha<1/4$ and decompose $\mathbb R$ into
\begin{equation*}
J_1\coloneqq  \left\{ x:F_\theta(x)\le2\alpha
\right\}, \qquad J_2\coloneqq \left\{
x:2\alpha<F_\theta(x)\le3/4 \right\},
\qquad J_3 \coloneqq \left\{x:F_\theta(x)>3/4\right\}.
\end{equation*}

We first derive pointwise estimates on these three regions. 

On $J_1$,
the map $t\mapsto\min\{t,\alpha\}$ is $1$-Lipschitz, and the binomial
variance bound yields
\begin{equation*}
\mathbb E \left|
\min\{F_{n,\theta}(x),\alpha\} -
\min\{F_\theta(x),\alpha\} \right|\le \mathbb{E}\left|F_{n,\theta}(x)-F_{\theta}(x)\right| \le
\sqrt{\frac{F_\theta(x)(1-F_{\theta}(x))}{n}}\le \sqrt{\frac{F_{\theta}(x)}{n}}.
\end{equation*}

On $J_2\cup J_3$, since $F_\theta(x)>\alpha$, we have
$\min\{F_\theta(x),\alpha\}=\alpha$, and hence
\begin{equation*}
\mathbb E \left| \min\{F_{n,\theta}(x),\alpha\}- \min\{F_\theta(x),\alpha\}
\right|= \mathbb E \left[
\left(\alpha-F_{n,\theta}(x)\right)_+
\right]\le \alpha\mathbb P\left(
F_{n,\theta}(x)\le\alpha \right).
\end{equation*}
On $J_2$, the inequality $F_\theta(x)>2\alpha$ and a Chernoff bound
give
\begin{equation*}
\mathbb P\left( F_{n,\theta}(x)\le\alpha\right) \le \mathbb{P}\left(F_{n,\theta}(x)\le\frac{F_\theta(x)}{2}\right)\le\exp\left(
-\frac{nF_\theta(x)}{8}\right)\le
\exp\left(-\frac{n\alpha}{4}\right).
\end{equation*}
On $J_3$, since $F_{\theta}(x)>\frac{3}{4}$ and $\alpha <1/4$, $F_\theta(x)-\alpha>1/2$. Then, Chebyshev's inequality
yields
\begin{equation*}
\mathbb P \left(F_{n,\theta}(x)\le\alpha\right) \le \mathbb{P}\left(\left|F_{n,\theta}(x)-F_{\theta}(x)\right| \ge F_{\theta}(x)-\alpha\right) \le \mathbb{P}\left(\left|F_{n,\theta}(x)-F_{\theta}(x)\right|\ge \frac{1}{2}\right)\le
\frac{4\left(1-F_\theta(x)\right)}{n}.
\end{equation*}

To integrate these pointwise estimates, we use the following
consequence of Markov's inequality: for every $0<a<c<1$,
\begin{equation*}
\left| \left\{x\in\mathbb R:
a<F_\theta(x)\le c\right\}\right|\le
2M_{\mu,r}(\theta)^{1/r}\max\left\{
a^{-1/r},(1-c)^{-1/r}\right\}.
\end{equation*}
This estimate applies directly to $J_2$. On $J_1$, we further group
points according to the magnitude of $F_\theta(x)$, while on $J_3$
we group them according to the magnitude of $1-F_\theta(x)$. On each
resulting region, the corresponding pointwise estimate is uniform and
the preceding level-set bound controls its length. Since $r>2$, the
resulting geometric series are summable. Combining the contributions
of $J_1$, $J_2$, and $J_3$ yields the desired estimate.
\end{proof}
\begin{remark}\label{rem:peelingtool}
The same estimate holds for the second sample. More precisely, for every $0\le \alpha\le 1$,
% if
% $\nu\in\mathcal K_{p,r}(b)$ and $\widehat\nu_m$ is the empirical measure associated with $m$ i.i.d. samples from $\nu$, and if $G_\theta$ and $G_{m,\theta}$ denote the distribution functions of $\Pi^\theta_{\#}\nu$ and $\Pi^\theta_{\#}\widehat\nu_m$, respectively, then 
$$ \mathbb E \int_0^\alpha
\left| G_{m,\theta}^{-1}(u)-G_\theta^{-1}(u) \right|
du\lesssim_r\frac{M_{\nu,r}(\theta)^{1/r}}{\sqrt m}\alpha^{1/2-1/r}.
$$
Moreover, by applying the same argument to the upper tail, we obtain,
for every $0\le \beta\le1$,
$$ \mathbb E \int_{\beta}^{1}
\left| F_{n,\theta}^{-1}(u)-F_\theta^{-1}(u) \right|
du \lesssim_r \frac{M_{\mu,r}(\theta)^{1/r}}{\sqrt n} (1-\beta)^{1/2-1/r},
$$
and similarly
$$
\mathbb E \int_\beta^1 \left| G_{m,\theta}^{-1}(u)- G_\theta^{-1}(u) \right|du
\lesssim_r \frac{M_{\nu,r}(\theta)^{1/r}}{\sqrt m} (1-\beta)^{1/2-1/r}.
$$
\end{remark}
Proposition~\ref{prop:peelingtool} and
Remark~\ref{rem:peelingtool} provide the localized estimates needed
to control the weighted quantile terms $\mathsf{A}_{n,\theta}$ and
$\mathsf{B}_{m,\theta}$. In the next section, we first consider the regime $r>2p$.
\subsection{The regime $r>2p$}\label{sec:upperboundr>2p}
In this case, the localized
estimate in Proposition~\ref{prop:peelingtool}, combined with a peeling
argument according to the magnitude of the quantile weights, yields
the following bound.
\begin{lemma}[Weighted quantile estimate]\label{lem:weighted-quantile}
Let $p>1$ and $r>2p$. For the fixed $\mu,\nu\in\mathcal K_{p,r}(b)$ and $\theta\in\mathcal G$, where $\mathcal K_{p,r}(b)$ and $\mathcal G$ are defined in Eqs.~\eqref{sec1:eqclass} and
\eqref{eq:gooddirections}, respectively, we have
% Let $p>1$ and $r>2p$. Let $\mu,\nu\in\mathcal K_{p,r}(b)$, where $\mathcal K_{p,r}(b)$ is defined in Eq.~\eqref{sec1:eqclass}. Fix $\theta\in G$, where $G$ is defined in Eq.~\eqref{eq:gooddirections}. Let $F_\theta$ and $G_\theta$ be the distribution functions of $\Pi^\theta_{\#}\mu$ and $\Pi^\theta_{\#}\nu$, respectively, and let $F_{n,\theta}$ be the empirical distribution function associated with $n$ i.i.d. samples from $\Pi^\theta_{\#}\mu$. Then 
$$\mathbb E
\int_0^1 \left|
F_{n,\theta}^{-1}(u)-F_\theta^{-1}(u)
\right| \left| G_\theta^{-1}(u)
\right|^{p-1} du\lesssim_{p,r}
\frac{ M_{\mu,r}(\theta)^{1/r}
M_{\nu,r}(\theta)^{(p-1)/r} }{\sqrt n}.
$$
% where
% $
% M_{\mu,r}(\theta)
% \coloneqq
% \mathbb E_{X\sim\mu}\left[|\langle X,\theta\rangle|^r\right]$ and $M_{\nu,r}(\theta)
% \coloneqq
% \mathbb E_{Y\sim\nu}\left[|\langle Y,\theta\rangle|^r\right].
% $
The same bound holds with $G_\theta^{-1}$ replaced by
$F_\theta^{-1}$ and with the right-hand side replaced by
$M_{\mu,r}(\theta)^{p/r}/\sqrt n$.
The analogous estimates obtained by exchanging
$(\mu,n,F_\theta,F_{n,\theta})$ and
$(\nu,m,G_\theta,G_{m,\theta})$ also hold.
\end{lemma}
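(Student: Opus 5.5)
We plan a peeling argument over the quantile levels, using the localized estimate of Proposition~\ref{prop:peelingtool} and Remark~\ref{rem:peelingtool} on each layer. By homogeneity we may assume nothing, but it is convenient to write $a\coloneqq M_{\mu,r}(\theta)^{1/r}$ and $c\coloneqq M_{\nu,r}(\theta)^{1/r}$.

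\textbf{Step 1: tail bound on the quantile weight.} Markov's inequality gives $\nu(|\langle Y,\theta\rangle|>t)\le c^r t^{-r}$. This yields the pointwise bounds
\[
|G_\theta^{-1}(u)|\le c\,u^{-1/r}\ \text{ for } u\le 1/2,
\qquad
|G_\theta^{-1}(u)|\le c\,(1-u)^{-1/r}\ \text{ for } u\ge 1/2 .
\]
Indeed, if $G_\theta^{-1}(u)<-t$ then $G_\theta(-t)\ge u$, so $u\le c^rt^{-r}$. The upper tail is symmetric. Hence the weight satisfies $|G_\theta^{-1}(u)|^{p-1}\le c^{p-1}\min\{u,1-u\}^{-(p-1)/r}$.

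\textbf{Step 2: dyadic peeling.} Split $(0,1/2]$ into the layers $I_k\coloneqq(2^{-k-1},2^{-k}]$ for $k\ge1$. On $I_k$ the weight is at most $c^{p-1}2^{(k+1)(p-1)/r}$. Applying Proposition~\ref{prop:peelingtool} with $\alpha=2^{-k}$ and enlarging $I_k$ to $(0,2^{-k}]$ gives
\[
\mathbb E\int_{I_k}\bigl|F_{n,\theta}^{-1}-F_\theta^{-1}\bigr|\,|G_\theta^{-1}|^{p-1}du
\lesssim_r c^{p-1}2^{(k+1)(p-1)/r}\,\frac{a}{\sqrt n}\,2^{-k(1/2-1/r)} .
\]
Proposition~\ref{prop:peelingtool} requires $r>2$, which holds because $r>2p\ge2$.

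\textbf{Step 3: summation.} The layer bounds sum geometrically in $k$ with exponent
\[
\frac{p-1}{r}-\frac12+\frac1r=\frac pr-\frac12<0,
\]
and this is exactly where $r>2p$ is used. The lower half therefore contributes $\lesssim_{p,r} ac^{p-1}/\sqrt n$. The upper half $[1/2,1)$ is treated identically, using the layers $1-u\in(2^{-k-1},2^{-k}]$ and the upper-tail estimate of Remark~\ref{rem:peelingtool} with $\beta=1-2^{-k}$. Adding the two halves gives the claimed bound.

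\textbf{Variants.} Replacing $G_\theta^{-1}$ by $F_\theta^{-1}$ changes $c$ to $a$ in Step 1 and gives $a^p/\sqrt n=M_{\mu,r}(\theta)^{p/r}/\sqrt n$. Exchanging the roles of the two samples uses the $G_{m,\theta}$ versions in Remark~\ref{rem:peelingtool}.

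\textbf{Main obstacle.} The only delicate point is the quantile tail bound in Step 1, in particular handling the generalized inverse at the boundary and the convention at $u=1/2$. That step is routine; all the heavy lifting is in Proposition~\ref{prop:peelingtool}.
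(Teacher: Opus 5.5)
Your argument is correct, but it organizes the peeling differently from the paper. The paper peels according to the \emph{size of the weight}. With $K=M_{\nu,r}(\theta)^{1/r}$, it treats the central set $\{|G_\theta^{-1}|<K\}$ separately through the global $\operatorname{W}_1$ bound of Lemma~\ref{lem:projected-empirical-rate}. It then places each layer $\{2^kK\le|G_\theta^{-1}|<2^{k+1}K\}$ inside $(0,a_k]\cup[b_k,1)$, where $a_k=G_\theta(-2^kK)$ and $b_k=G_\theta(2^kK-)$ satisfy $a_k,\,1-b_k\le 2^{-kr}$ by Markov. Finally it applies Proposition~\ref{prop:peelingtool} and Remark~\ref{rem:peelingtool} with $\alpha=a_k$ and $\beta=b_k$.

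You instead use Markov once to obtain the pointwise envelope $|G_\theta^{-1}(u)|\le M_{\nu,r}(\theta)^{1/r}\min\{u,1-u\}^{-1/r}$, and then peel dyadically in the quantile level $u$ itself. The bookkeeping is equivalent: the paper's $k$-th layer sits at level $\alpha\le 2^{-kr}$, yours at $\alpha=2^{-k}$, and both summands are of the form $\alpha^{1/2-p/r}$. For $r>2p$ your version is slightly leaner. It needs no separate central region, because the layer $(1/4,1/2]$ is absorbed and Proposition~\ref{prop:peelingtool} handles $\alpha\ge 1/4$ internally. It also avoids computing $G_\theta(-2^kK)$ and $G_\theta(2^kK-)$.

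What the paper's value-based layering buys is that it is the decomposition later refined at the critical order $r=2p$. There both infinite-peeling arguments produce a constant, non-summable layer factor. The paper's fix replaces $|G_\theta^{-1}|$ by $H_\theta=\max\{|F_\theta^{-1}|,|G_\theta^{-1}|\}$ and keeps the actual layer masses $S_k$ rather than their Markov bounds, exploiting $\sum_k 2^{2pk}S_k\lesssim_p 1$ together with Cauchy--Schwarz (Lemma~\ref{lem:endpointweightedquantile}). Your envelope builds the Markov bound directly into the weight, so it does not lend itself to that refinement.
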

\begin{proof}
If $M_{\nu,r}(\theta)=0$, then $G_\theta^{-1}(u)=0$ for every $u\in(0,1)$. Since $p>1$, the weighted integral is equal to zero, and the result follows immediately.

We now assume that $M_{\nu,r}(\theta)\in(0,+\infty)$. For simplicity, we set
$
K\coloneqq M_{\nu,r}(\theta)^{1/r}>0.\\
$
We decompose $(0,1)$ according to the size of $|G_\theta^{-1}(u)|$. Define
$$E^* \coloneqq \left\{ u\in(0,1): |G_\theta^{-1}(u)|<K
\right\},
$$
and, for $k\ge0$,
$$ E_k \coloneqq \left\{ u\in(0,1): 2^kK\le |G_\theta^{-1}(u)|<2^{k+1}K\right\}.
$$

On $E^*$, the weight is bounded by $K^{p-1}$. Hence, by Lemma~\ref{lem:projected-empirical-rate} applied (with $Q=\mu$, $q=1, s=r$),
\begin{align*}
\mathbb E\int_{E^*}\left|F_{n,\theta}^{-1}(u)-F_\theta^{-1}(u)\right|\left|G_\theta^{-1}(u)\right|^{p-1}du
&\le K^{p-1} \mathbb E \int_0^1
\left|F_{n,\theta}^{-1}(u)-F_\theta^{-1}(u)
\right|du \\
&\le C_r \frac{ M_{\mu,r}(\theta)^{1/r} K^{p-1}}{\sqrt n}=C_r \frac{ M_{\mu,r}(\theta)^{1/r} M_{\nu,r}(\theta)^{(p-1)/r}}{\sqrt n}.
\end{align*}

We now consider the sets $E_k$. Define
$a_k\coloneqq G_\theta(-2^kK), b_k\coloneqq G_\theta(2^kK-),
$ where
$
G_\theta(2^kK-) \coloneqq \lim_{t\to  \left(2^kK\right)^-}G_\theta(t).$ The condition $|G_\theta^{-1}(u)|\ge 2^kK$ implies
$
u\in(0,a_k]\cup[b_k,1).
$
Therefore, using Proposition~\ref{prop:peelingtool} and its upper-tail analogue, we obtain
$$
\begin{aligned}
&\mathbb E \int_{E_k} \left| F_{n,\theta}^{-1}(u)-F_\theta^{-1}(u) \right| \left| G_\theta^{-1}(u)
\right|^{p-1}du \\
&\le (2^{k+1}K)^{p-1} \mathbb E
\left[\int_0^{a_k} \left| F_{n,\theta}^{-1}(u)-F_\theta^{-1}(u) \right|du
+\int_{b_k}^1\left|F_{n,\theta}^{-1}(u)-F_\theta^{-1}(u)
\right|du \right] \\
&\le C_r2^{(k+1)(p-1)}K^{p-1}\frac{M_{\mu,r}(\theta)^{1/r}}{\sqrt n}\left(a_k^{1/2-1/r}+(1-b_k)^{1/2-1/r}
\right).
\end{aligned}
$$

We now bound $a_k$ and $1-b_k$. By Markov's inequality,
$$a_k =G_\theta(-2^kK)=\mathbb P_{Y\sim\nu}
\left(\langle Y,\theta\rangle\le -2^kK
\right)\le\mathbb P_{Y\sim\nu}
\left(|\langle Y,\theta\rangle|\ge 2^kK
\right)\le2^{-kr}.
$$
Similarly, since $b_k=G_\theta(2^kK-)$, we have
$$1-b_k=\mathbb P_{Y\sim\nu}\left(\langle Y,\theta\rangle\ge 2^kK\right)\le\mathbb P_{Y\sim\nu}\left(|\langle Y,\theta\rangle|\ge 2^kK\right)\le2^{-kr}.
$$
Since $r>2p>2$, we have $1/2-1/r>0$. Consequently,
$$a_k^{1/2-1/r}+(1-b_k)^{1/2-1/r}\le2^{-k(r/2-1)+1}.
$$
Hence,
\begin{equation}\label{eq:peelingcontribution}
\mathbb E\int_{E_k}\left|F_{n,\theta}^{-1}(u)-F_\theta^{-1}(u)\right|\left|G_\theta^{-1}(u)\right|^{p-1}du\lesssim_{p,r}\frac{M_{\mu,r}(\theta)^{1/r}M_{\nu,r}(\theta)^{(p-1)/r}}{\sqrt n}2^{k(p-r/2)}.
\end{equation}

Since $r>2p$, we have $p-r/2<0$. Therefore,
$
\sum_{k\ge0}2^{k(p-r/2)}<+\infty.\\
$
Summing over $k\ge0$ and adding the contribution of $E^*$, we obtain
$$
\mathbb E\int_0^1\left|F_{n,\theta}^{-1}(u)-F_\theta^{-1}(u)\right|\left|
G_\theta^{-1}(u)\right|^{p-1}du \lesssim_{p,r}\frac{M_{\mu,r}(\theta)^{1/r}M_{\nu,r}(\theta)^{(p-1)/r}}{\sqrt n}.
$$
The bound with $F_\theta^{-1}$ as the weight follows from the same
argument with $\nu$ replaced by $\mu$. The corresponding estimates for
the second sample follow by exchanging
$(\mu,n,F_\theta,F_{n,\theta})$ and
$(\nu,m,G_\theta,G_{m,\theta})$.
\end{proof}
Combining Lemmas~\ref{lem:projected-empirical-rate},
\ref{lem:decomposition}, and \ref{lem:weighted-quantile} yields the
upper bound in Theorem~\ref{thm:minimax-noncritical} for the regime
$r>2p$. 
\begin{proof}[Proof sketch for Theorem~\ref{thm:minimax-noncritical}
when $r>2p$.]
For $p=1$, the result follows from
Lemma~\ref{lem:projected-empirical-rate} and the reverse triangle
inequality for $\operatorname W_1$. For $p>1$,
Lemma~\ref{lem:decomposition} reduces the directional error to
$\mathsf{A}_{n,\theta}$, $\mathsf{B}_{m,\theta}$, $\mathsf{C}_{n,\theta}$, and
$\mathsf{D}_{m,\theta}$. Lemma~\ref{lem:projected-empirical-rate} bounds the
unweighted terms $\mathsf{C}_{n,\theta}$ and $\mathsf{D}_{m,\theta}$ by $n^{-1/2}M_{\mu,r}(\theta)^{p/r}$ and $m^{-1/2}M_{\nu,r}(\theta)^{p/r}$ respectively, while Lemma~\ref{lem:weighted-quantile} gives the same
sample-size rates for the weighted terms. Integrating the resulting directional bounds over $\theta$
and applying Hölder's inequality to terms of the form $M_{\mu,r}(\theta)^{1/r} M_{\nu,r}(\theta)^{(p-1)/r}$ yields $\mathcal R_{n,m}^{\mathrm{plug}} \left(\mathcal K_{p,r}(b)\right)
\lesssim_{p,r}b\,n^{-1/2}.$
The complete proof is given in
Appendix~\ref{appendix:upperboundr>2p}.
\end{proof}
\subsection{The regime $p<r<2p$}\label{sec:upperboundp<r<2p}

We now turn to the regime $p<r<2p$. The proof differs from the preceding regime in two respects. First,
the contributions of the successive layers no longer form a summable
series, so the peeling decomposition must be truncated. Second,
Proposition~\ref{prop:peelingtool} is only available when $r>2$,
whereas the present regime also includes $1<r\le2$. The following
lemma resolves both difficulties.
    
\begin{lemma}[Truncated weighted quantile estimate]
\label{lem:truncated-weighted-quantile}
Let $p>1$ and $p<r<2p$. For the fixed $\mu,\nu\in\mathcal K_{p,r}(b)$ and any $\theta\in\mathcal G$, we have
\begin{equation*}
\mathbb E\int_0^1
\left|F_{n,\theta}^{-1}(u)-F_\theta^{-1}(u)\right|\left|G_\theta^{-1}(u)
\right|^{p-1}du\lesssim_{p,r}M_{\mu,r}(\theta)^{1/r}M_{\nu,r}(\theta)^{(p-1)/r}
n^{-(r-p)/r}.
\end{equation*}
The same bound holds with $G_\theta^{-1}$ replaced by
$F_\theta^{-1}$ and with the corresponding right-hand side $M_{\mu,r}(\theta)^{p/r}n^{-(r-p)/r}$.
Also, the analogous estimates obtained by exchanging
$(\mu,n,F_\theta,F_{n,\theta})$ and
$(\nu,m,G_\theta,G_{m,\theta})$ hold.
\end{lemma}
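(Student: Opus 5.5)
The plan is to split $(0,1)$ at a truncation level for the weight $|G_\theta^{-1}|$, treat the part where the weight is large by Hölder's inequality with the $r$-th moment, and treat the part where it is moderate either by the global $\operatorname W_1$ bound or by peeling. Write $L\coloneqq M_{\mu,r}(\theta)^{1/r}$ and $K\coloneqq M_{\nu,r}(\theta)^{1/r}$. If $K=0$ the integral vanishes, as in the proof of Lemma~\ref{lem:weighted-quantile}, so we may assume $K>0$. Fix the level $T\coloneqq K n^{1/r}$ and split $(0,1)$ into
$$
S_\le\coloneqq\{u:|G_\theta^{-1}(u)|\le T\},\qquad S_>\coloneqq\{u:|G_\theta^{-1}(u)|>T\}.
$$
This truncation replaces the nonsummable peeling series of the regime $r>2p$.

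\textbf{Step 1: the large-weight region $S_>$.} I will not use any fluctuation estimate here. Instead, bound $|F_{n,\theta}^{-1}-F_\theta^{-1}|\le|F_{n,\theta}^{-1}|+|F_\theta^{-1}|$ and apply Hölder with exponents $r$ and $r/(r-1)$. Since $\mathbb E\int_0^1|F_{n,\theta}^{-1}|^r\,du=\int_0^1|F_\theta^{-1}|^r\,du=L^r$, Jensen's inequality gives, for both pieces,
$$
\mathbb E\int_{S_>}\bigl(|F_{n,\theta}^{-1}|+|F_\theta^{-1}|\bigr)|G_\theta^{-1}|^{p-1}du\le 2L\Bigl(\int_{S_>}|G_\theta^{-1}|^{\frac{(p-1)r}{r-1}}du\Bigr)^{\frac{r-1}{r}}.
$$
Because $s\coloneqq (p-1)r/(r-1)<r$, the elementary tail bound $\int_{\{|G_\theta^{-1}|>T\}}|G_\theta^{-1}|^s\,du\le K^rT^{s-r}$ applies. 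The right-hand side is then at most $2L K^{r-1}T^{p-r}$, which equals $2LK^{p-1}n^{-(r-p)/r}$ for our choice of $T$.

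\textbf{Step 2: the moderate region $S_\le$.} This step depends on the value of $r$.
\begin{itemize}
\item If $r<2$, bound the weight by $T^{p-1}$ and use Lemma~\ref{lem:projected-empirical-rate} with $q=1$, $s=r$. This gives $T^{p-1}L\,n^{-(r-1)/r}=LK^{p-1}n^{-(r-p)/r}$ directly.
\item If $r>2$, the crude bound loses a power of $n$, so I will peel as in Lemma~\ref{lem:weighted-quantile}. On $E^*$ the bound is $LK^{p-1}n^{-1/2}$. On the layers $E_k$ with $0\le k\le k_0$, where $2^{k_0}\asymp n^{1/r}$, Proposition~\ref{prop:peelingtool} and Remark~\ref{rem:peelingtool} give a contribution of order $LK^{p-1}n^{-1/2}2^{k(p-r/2)}$. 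Now $p-r/2>0$, so the series is increasing and geometric, and it is dominated by its last term: $n^{-1/2}n^{(p-r/2)/r}=n^{-(r-p)/r}$.
\end{itemize}

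\textbf{Step 3: the boundary case $r=2$ (so $1<p<2$); this is the main obstacle.} Neither route works as stated. The global bound of Lemma~\ref{lem:projected-empirical-rate} carries a factor $\log(n+1)$, and Proposition~\ref{prop:peelingtool} requires $r>2$. I would first try to rerun the proof of Proposition~\ref{prop:peelingtool} with $r=2$. There the geometric series on $J_1$ and $J_3$ become arithmetic, which should yield
$$
\mathbb E\int_0^\alpha|F_{n,\theta}^{-1}-F_\theta^{-1}|\,du\lesssim \frac{L}{\sqrt n}\bigl(1+\log_+(n\alpha)\bigr),
$$
where the $\log$ arises only from levels of $F_\theta$ between $1/n$ and $\alpha$. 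Plugging this into the peeling with $\alpha\le 2^{-2k}$ gives layer contributions $LK^{p-1}n^{-1/2}2^{k(p-1)}(1+\log_+(n2^{-2k}))$. The exponential growth in $k$ dominates the logarithm, which vanishes near $k_0$. So the sum is still of order $2^{k_0(p-1)}n^{-1/2}=n^{-(2-p)/2}$, the claimed rate.

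If this modified localized estimate proves delicate, a fallback is to treat $S_\le$ directly through the identity $\int|F_{n,\theta}^{-1}-F_\theta^{-1}|\,du=\int|F_{n,\theta}-F_\theta|\,dx$ restricted to quantile layers.

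\textbf{Step 4: assembling the bound.} Summing Steps 1 and 2 (or 3) gives the stated bound. The variant with weight $|F_\theta^{-1}|^{p-1}$ is the special case $\nu=\mu$, with right-hand side $L^p n^{-(r-p)/r}$. The second-sample estimates follow by symmetry, with $m$ in place of $n$.
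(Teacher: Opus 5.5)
Your proposal is correct, and it departs from the paper's proof in two places.

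\textbf{The large-weight region.} The paper splits $|F_{n,\theta}^{-1}-F_\theta^{-1}|\le|F_{n,\theta}^{-1}|+|F_\theta^{-1}|$ and treats the two pieces separately:
\begin{itemize}
\item the population piece by Hölder with exponent $p$;
\item the empirical piece through $\int_{E_\infty}|F_{n,\theta}^{-1}|^p\,du\le\lambda(E_\infty)\max_i|\langle X_i,\theta\rangle|^p$ and $\mathbb E\max_i|\langle X_i,\theta\rangle|^r\le nM_{\mu,r}(\theta)$.
\end{itemize}
This ends with $2^{-J_n(r-p)}+n^{1/r}2^{-J_n(r-p+1)}\asymp n^{-(r-p)/r}$ for $2^{J_n}\asymp n^{1/r}$, which is the same threshold as your $T=Kn^{1/r}$. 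Your single Hölder step with exponents $r$ and $r/(r-1)$ handles both pieces at once and is cleaner. It works because $\int_0^1|F_{n,\theta}^{-1}|^r\,du$ is the empirical $r$-th moment and is therefore unbiased.

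\textbf{The moderate region, $r\neq 2$.} For $1<r<2$, your one-shot bound $T^{p-1}\,\mathbb E\operatorname W_1$ is exactly what the paper's layer sum collapses to. For $r>2$ you do what the paper does.

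\textbf{The case $r=2$.} This is where the routes genuinely differ. The paper avoids reopening Proposition~\ref{prop:peelingtool}. It bounds each layer via Hölder and the noncritical $\operatorname W_{q_0}$ bound of Lemma~\ref{lem:projected-empirical-rate}, with $1<q_0<2/(3-p)$. The layers then grow like $2^{k(p-3+2/q_0)}$ with positive exponent, and no logarithm ever appears. You instead prove a log-degraded localized estimate $\frac{L}{\sqrt n}(1+\log_+(n\alpha))$ and let the factor $2^{k(p-1)}$ swamp the logarithm. This works: $\sum_{k\le k_0}2^{k(p-1)}(1+k_0-k)\lesssim_p 2^{k_0(p-1)}$. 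The paper's route reuses an existing lemma as a black box; yours avoids the auxiliary exponent $q_0$ but requires redoing the localized estimate.

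\textbf{What to make explicit in that redo.}
\begin{itemize}
\item Only the $J_1$ sum degenerates at $r=2$. The $J_3$ sum $\sum_j 2^{-j(1-1/r)}$ stays geometric.
\item On $J_1$, the bound $\sqrt{F_\theta(x)/n}$ alone, as used in the paper's proof, gives a bound of order $L/\sqrt n$ for every dyadic level, so the sum over levels diverges. You must also use $\mathbb E|F_{n,\theta}(x)-F_\theta(x)|\le 2F_\theta(x)$ on levels with $F_\theta(x)<1/n$. That is what produces the factor $1+\log_+(n\alpha)$.
\item The region $E^*$ and the case $\alpha\ge 1/4$ carry a factor $\log(n+1)$ from the critical $\operatorname W_1$ bound. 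It is absorbed because $n^{-1/2}\log(n+1)\lesssim_p n^{-(2-p)/2}$ for $p>1$.
\end{itemize}
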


\begin{proof}[Proof sketch.]
Set $K=M_{\nu,r}(\theta)^{1/r}$. For an integer $J_n\ge1$, we
decompose $(0,1)$ into the region where
$|G_\theta^{-1}|<K$, the finitely many layers where $2^kK\le |G_\theta^{-1}|<2^{k+1}K$  for $0\le k\le J_n-1$,
and the far-tail region where
$|G_\theta^{-1}|\ge2^{J_n}K$.

On the first region, the weight is bounded, and the contribution is
controlled by the empirical measure estimation bound in $\operatorname W_1$ obtained by applying Lemma~\ref{lem:projected-empirical-rate}. On the intermediate layers, we use Proposition~\ref{prop:peelingtool} when $r>2$ and the global empirical measure estimation bound in $\operatorname W_1$ when $1<r<2$. At $r=2$, a direct use of the critical $\operatorname W_1$ bound on every layer would introduce an additional logarithmic factor. We avoid this loss by applying Hölder's inequality and an empirical measure estimation bound in $\operatorname W_{q_0}$ for a suitably chosen $1<q_0<2/(3-p).$

The far-tail contribution is controlled directly by Markov's and
Hölder's inequalities together with the projected $r$-th moment
bounds. Choosing $J_n$ so as to balance the contributions of the
retained layers and the far-tail remainder yields the rate $n^{-(r-p)/r}$. The complete proof is given in Appendix~\ref{appendix:prooflem:truncated-weighted-quantile}.
\end{proof}
Once Lemma~\ref{lem:truncated-weighted-quantile} is established, the
upper bound in Theorem~\ref{thm:minimax-noncritical} follows by the
same aggregation argument as in the regime $r>2p$. We therefore omit
the repetition.

\subsection{The critical regime $r=2p$}\label{sec:upperboundr=2p}
In this subsection, we focus on the upper bound in
Eq.~\eqref{eq:critical-plugin-risk}. Unlike the two preceding regimes,
Lemma~\ref{lem:projected-empirical-rate} is insufficient to obtain the
desired bound, even when $p=1$. We therefore first establish a sharper empirical measure estimation bound under the powered Wasserstein loss at the critical moment order.
\begin{lemma}\label{lem:projected-empirical-ratev2}
Let $q\ge1$. Let $Q$ be a probability measure on $\mathbb R^d$ and
$\widehat Q_n$ be the empirical measure associated with $n$ i.i.d.\
samples from $Q$. Fix $\theta\in\mathbb S^{d-1}$ such that
$M_{Q,2q}(\theta)<\infty$. Then
$$\mathbb E\operatorname{W}_q^q\left( \Pi^\theta_{\#}\widehat{Q}_n,\Pi^\theta_{\#}Q\right)\lesssim_q \left(M_{Q,2q}(\theta)\right)^{1/2}\sqrt{\log (n+1)n^{-1}}.
$$
\end{lemma}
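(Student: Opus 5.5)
By homogeneity it suffices to treat the normalized case. If $M_{Q,2q}(\theta)=0$, both projected measures equal $\delta_0$ and there is nothing to prove. Otherwise, replacing $\langle X,\theta\rangle$ by $\langle X,\theta\rangle/\lambda$ with $\lambda=M_{Q,2q}(\theta)^{1/(2q)}$ multiplies the left-hand side by $\lambda^{-q}$ and $M_{Q,2q}(\theta)^{1/2}$ by $\lambda^{-q}$. We may therefore assume $M_{Q,2q}(\theta)=1$. Write $F$ and $F_n$ for the distribution functions of $\Pi^\theta_{\#}Q$ and $\Pi^\theta_{\#}\widehat Q_n$, and let $X$ denote a random variable with law $\Pi^\theta_{\#}Q$.

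The first step is the one-dimensional comparison inequality (see \citet{bobkov2019one})
\begin{equation*}
\operatorname{W}_q^q\left(\Pi^\theta_{\#}\widehat Q_n,\Pi^\theta_{\#}Q\right)\le q\,2^{q-1}\int_{\mathbb R}|x|^{q-1}\left|F_n(x)-F(x)\right|dx .
\end{equation*}
If I have to justify it, I would use the quantile representation together with the elementary bound $|a-b|^q\le q2^{q-1}\left|\int_b^a|t|^{q-1}dt\right|$, valid for all real $a,b$. One checks it separately when $a,b$ have the same sign and when they have opposite signs. Integrating this bound over $u\in(0,1)$ and applying Fubini, the region between the two quantile graphs becomes the region between the two distribution functions, which gives the display. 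I expect this comparison step to be the only point needing care; the remaining steps are integral bookkeeping.

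Taking expectations, I will use the pointwise bound
\begin{equation*}
\mathbb E|F_n(x)-F(x)|\le\min\left\{\sqrt{F(x)(1-F(x))/n},\,2\min\{F(x),1-F(x)\}\right\}\le\min\left\{\sqrt{T(x)/n},\,2T(x)\right\},
\end{equation*}
where $T(x)\coloneqq\mathbb P(|X|\ge|x|)$. The last inequality holds because $F(x)\le T(x)$ for $x<0$ and $1-F(x)\le T(x)$ for $x\ge0$. The key identity is
\begin{equation*}
\int_{\mathbb R}|x|^{2q-1}T(x)\,dx=\frac{\mathbb E|X|^{2q}}{q}=\frac1q .
\end{equation*}

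Next fix $R=\max\{n^{1/(2q)},e\}$ and split $\mathbb R$ into three pieces.
\begin{itemize}
\item On $\{|x|\le1\}$, bound $T$ by $1$; the contribution is at most $n^{-1/2}\int_{|x|\le1}|x|^{q-1}dx\le 2n^{-1/2}$.
\item On $\{1<|x|\le R\}$, use the first branch of the minimum and Cauchy--Schwarz:
\begin{equation*}
\frac{1}{\sqrt n}\int_{1<|x|\le R}|x|^{q-1}\sqrt{T(x)}\,dx\le\frac{1}{\sqrt n}\left(\int|x|^{2q-1}T(x)\,dx\right)^{1/2}\left(\int_{1<|x|\le R}\frac{dx}{|x|}\right)^{1/2}\lesssim_q\sqrt{\frac{\log R}{n}} .
\end{equation*}
\item On $\{|x|>R\}$, use the second branch: $2\int_{|x|>R}|x|^{q-1}T(x)\,dx\le 2R^{-q}\int|x|^{2q-1}T(x)\,dx\lesssim_q R^{-q}\le n^{-1/2}$.
\end{itemize}

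Summing the three contributions gives $\mathbb E\operatorname{W}_q^q\lesssim_q n^{-1/2}\left(1+\sqrt{\log R}\right)$. Since $\log R\lesssim_q\log(n+1)$ and $\log(n+1)\ge\log2$ for $n\ge1$, this is $\lesssim_q\sqrt{\log(n+1)/n}$. Undoing the normalization yields the statement.
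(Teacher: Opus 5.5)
Your proof is correct and follows essentially the same route as the paper. Both arguments use the Bobkov--Ledoux comparison inequality $\operatorname{W}_q^q\le q2^{q-1}\int|x|^{q-1}|F_n-F|\,dx$, the pointwise bound by $\min\{2T,\sqrt{T/n}\}$, and a three-region split: Cauchy--Schwarz against $dx/x$ together with the tail-moment identity in the middle region, and $|x|^{q-1}\le R^{-q}|x|^{2q-1}$ beyond the cutoff. The differences are cosmetic: you normalize to $M_{Q,2q}(\theta)=1$, cut at $R=\max\{n^{1/(2q)},e\}$ rather than at $R_2=R_1(n+1)$, and sketch a proof of the comparison inequality instead of citing \citet[Proposition~7.14]{bobkov2019one}.
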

The proof, given in
Appendix~\ref{appendix:proofoflem:projected-empirical-ratev2},
refines the argument of \citet[Corollary~7.18]{bobkov2019one}. By replacing the pointwise Chebyshev control of the critical tail with a Cauchy--Schwarz argument based on the exact tail-moment identity, it improves the rate from
$\log(n+1)/\sqrt n$ to $\sqrt{\log(n+1)/n}$.

When $p=1$, Lemma~\ref{lem:projected-empirical-ratev2} together with the reverse triangle inequality for
$\operatorname W_1$ already yields the upper bound in
Eq.~\eqref{eq:critical-plugin-risk}. As in the preceding regimes, when $p>1$ we must also control the
weighted terms $A_{n,\theta}$ and $B_{m,\theta}$ in
Lemma~\ref{lem:decomposition}. However, neither of the two preceding
peeling arguments attains the target rate at the critical order
$r=2p$. In the infinite peeling argument, the layer factor
$2^{k(p-r/2)}$ becomes constant and is therefore not summable. In the
truncated argument, Eq.~\eqref{eq:peelingcontribution} shows  that the
first $J_n$ layers contribute at order $J_n/\sqrt n$, while the
far-tail remainder (see Appendix~\ref{appendix:prooflem:truncated-weighted-quantile}, Step~4) contains the term
$n^{1/(2p)}2^{-(p+1)J_n}$. Unfortunately, no choice of $J_n$ makes both terms of order $\sqrt{\log(n+1)/n}$.

To overcome this difficulty, rather than applying Markov's inequality
separately to the tail mass of each layer, as in the proof of
Lemma~\ref{lem:weighted-quantile}, we retain the actual tail masses
and control them collectively after summation. This leads to the
following weighted quantile estimate.
\begin{lemma}\label{lem:endpointweightedquantile}
Let $p>1$. For the fixed
$\mu,\nu\in\mathcal K_{p,2p}(b)$ and any $\theta\in\mathcal G$, we have
% Let $p>1$. Let $\mu,\nu\in\mathcal P(\mathbb R^d)$, and fix $\theta\in \mathbb{S}^{d-1}$ such that
% $
% M_{\mu,2p}(\theta)+M_{\nu,2p}(\theta)<\infty.\\
% $
% Let $F_\theta$ and $G_\theta$ be the distribution functions of $\Pi^\theta_\#\mu$ and $\Pi^\theta_\#\nu$, respectively. Let $F_{n,\theta}$ and $G_{m,\theta}$ be the empirical distribution functions associated with $n$ i.i.d. samples from $\Pi^\theta_\#\mu$ and $m$ i.i.d. samples from $\Pi^\theta_\#\nu$, respectively. 
$$
\mathbb E\int_0^1 \left|F_{n,\theta}^{-1}(u)-F_\theta^{-1}(u)\right|
\left|G_\theta^{-1}(u)\right|^{p-1}du
\lesssim_p \left(
M_{\mu,2p}(\theta)+M_{\nu,2p}(\theta)
\right)^{1/2}\sqrt{\log (n+1)n^{-1}}.
$$
The same bound holds with $G_\theta^{-1}$ replaced by
$F_\theta^{-1}$. The analogous estimates obtained by exchanging
$(\mu,n,F_\theta,F_{n,\theta})$ and
$(\nu,m,G_\theta,G_{m,\theta})$ also hold.
\end{lemma}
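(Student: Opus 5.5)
Let $K\coloneqq M_{\nu,2p}(\theta)^{1/(2p)}$ and $L\coloneqq M_{\mu,2p}(\theta)^{1/(2p)}$. As in the proof of Lemma~\ref{lem:weighted-quantile}, the plan is to decompose $(0,1)$ into the bulk $E^*=\{|G_\theta^{-1}|<K\}$ and the layers $E_k=\{2^kK\le|G_\theta^{-1}|<2^{k+1}K\}$. The difference is that we stop bounding each layer's tail mass by Markov's inequality and keep the actual masses $a_k=G_\theta(-2^kK)$ and $1-b_k$.

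\textbf{Bulk.} On $E^*$ the weight is at most $K^{p-1}$. Lemma~\ref{lem:projected-empirical-ratev2} with $q=1$ cannot be used, since we only control a $2p$-th moment and the relevant critical moment for $q=1$ is the second. Instead, we apply Lemma~\ref{lem:projected-empirical-rate} with $q=1$ and $s=2p>2$ (here $p>1$). This gives $K^{p-1}L/\sqrt n\le (M_{\mu,2p}+M_{\nu,2p})^{1/2}/\sqrt n$ by Young's inequality.

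\textbf{Layers.} Proposition~\ref{prop:peelingtool} with $r=2p>2$ bounds the contribution of $E_k$ by
\[
C_p\,\frac{L\,(2^{k}K)^{p-1}}{\sqrt n}\Bigl(a_k^{1/2-1/(2p)}+(1-b_k)^{1/2-1/(2p)}\Bigr).
\]
Write $\tau_k\coloneqq a_k+1-b_k\le\mathbb P(|\langle Y,\theta\rangle|\ge 2^kK)$. The layer term is then $\lesssim L\,n^{-1/2}\bigl((2^kK)^{2p}\tau_k\bigr)^{(p-1)/(2p)}$, since $(2^kK)^{p-1}\tau_k^{(p-1)/(2p)}$ is exactly that quantity. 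The quantities $t_k\coloneqq (2^kK)^{2p}\tau_k$ are each at most $M_{\nu,2p}$, and moreover $\sum_k t_k\lesssim_p \mathbb E|\langle Y,\theta\rangle|^{2p}$, because $\sum_{k:2^kK\le |y|}(2^kK)^{2p}\lesssim |y|^{2p}$ (geometric sum). This is the collective control of the tail masses.

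\textbf{Truncation.} We keep the first $J_n\approx\log_2(n+1)$ layers. H\"older's inequality on the finite sum gives
\[
\sum_{k<J_n}t_k^{(p-1)/(2p)}\le J_n^{(p+1)/(2p)}\Bigl(\sum_k t_k\Bigr)^{(p-1)/(2p)}.
\]
Alone this is not quite $\sqrt{J_n}$, so we refine for the main obstacle. For $k\ge J_n$ the far tail has $\tau_k\le 2^{-2pk}$, which is tiny compared with $1/n$. There we expect to bound the integral directly: $\mathbb E|F_{n,\theta}^{-1}-F_\theta^{-1}|$ is integrated against the weight on a set of $u$-measure $\tau_{J_n}$, handled via Cauchy--Schwarz with the $2p$-moment of $G^{-1}$ on that set and an $L^2$ bound of the quantile error there, mirroring Step~4 of the truncated lemma.

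\textbf{Main obstacle.} The step I expect to be hardest is getting exactly $\sqrt{\log n}$ for the retained layers. I would first try Cauchy--Schwarz in the split form
\[
\sum_k (2^kK)^{p-1}\,c_k\le\Bigl(\sum_k (2^kK)^{2p}\tau_k\Bigr)^{1/2}\Bigl(\sum_k (2^kK)^{-2}\tau_k^{-1}c_k^2\Bigr)^{1/2}.
\]
Here $c_k$ denotes the localized quantile error on the tail set $\{u\le a_k\}\cup\{u\ge b_k\}$. We would use a second-moment version of Proposition~\ref{prop:peelingtool}, namely $c_k\lesssim L\,\tau_k^{1/2-1/(2p)}n^{-1/2}$, together with $\tau_k\le M_{\mu,2p}$-type bounds to make each summand of the second factor $O(1/n)$, uniformly in $k$, times moments. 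Summing over $J_n\asymp\log(n+1)$ layers then gives $\sqrt{\log(n+1)/n}\,(M_{\mu,2p}+M_{\nu,2p})^{1/2}$. This is the same Cauchy--Schwarz-with-exact-tail-moment idea as in Lemma~\ref{lem:projected-empirical-ratev2}. The symmetric statements (weight $F_\theta^{-1}$, or the roles of the samples exchanged) follow verbatim.
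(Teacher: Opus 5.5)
\textbf{The gap is in your retained layers.} Your bulk estimate is fine, and so is your far-tail treatment (layers $k\ge J_n$ with $2^{J_n}\gtrsim n$, handled as in Step~3 of the proof of Lemma~\ref{lem:truncated-weighted-quantile}). A better Cauchy--Schwarz split cannot close this gap, because the per-layer input is itself too lossy. You use $c_k\lesssim M_{\mu,2p}(\theta)^{1/(2p)}\tau_k^{(p-1)/(2p)}n^{-1/2}$ from Proposition~\ref{prop:peelingtool}. This bounds the error of $F_{n,\theta}^{-1}$ on the $\nu$-tail set $\{|G_\theta^{-1}|\ge 2^kK\}$ via Markov's inequality for $\mu$ with its full moment, so it forgets how the $\mu$-mass is spread across scales.

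Given only these bounds and $\sum_k t_k\lesssim_p M_{\nu,2p}(\theta)$, your H\"older estimate $J_n^{(p+1)/(2p)}$ is already optimal. Take the equal-budget configuration $t_k\asymp M_{\nu,2p}(\theta)/J$ on $J\asymp\log n$ layers, realized for instance by $\mu=\nu$ equal to the multiscale measure in the proof of Lemma~\ref{lem:onedimensionalcaser=2p}. There the per-layer bounds add up to order $J^{(p+1)/(2p)}n^{-1/2}$, a factor $J^{1/(2p)}$ above the target, however you sum them.

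Your specific split also fails on its own terms. With that $c_k$, the summand of the second factor is of order $M_{\mu,2p}(\theta)^{1/p}(2^kK)^{-2}\tau_k^{-1/p}n^{-1}$. The only information on $\tau_k$ is the upper bound $\tau_k\le 2^{-2pk}$, which bounds $\tau_k^{-1/p}$ from below, not from above. So the summands blow up when $\tau_k$ is small but positive. Also, $\tau_k$ is a $\nu$-tail probability and is not controlled by $M_{\mu,2p}(\theta)$ in any useful direction.

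\textbf{The missing idea is to retain the actual tail masses of $\mu$ as well as those of $\nu$, on common level sets.} The paper peels with respect to $H_\theta=\max\{|F_\theta^{-1}|,|G_\theta^{-1}|\}$ at levels $2^kM_\theta^{1/(2p)}$, where $M_\theta=M_{\mu,2p}(\theta)+M_{\nu,2p}(\theta)$. It sets $S_j=\lambda\{H_\theta\ge 2^jM_\theta^{1/(2p)}\}$ and proves the localized estimate
$\mathbb E\int_{\{H_\theta\ge 2^kM_\theta^{1/(2p)}\}}|F_{n,\theta}^{-1}-F_\theta^{-1}|\,du\lesssim M_\theta^{1/(2p)}\sum_{j\ge k}2^j\min\{S_j,\sqrt{S_j/n}\}$.

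This comes from Lemma~\ref{lem:Quantile-CDF}. One integrates the pointwise bound $\min\{2T(|x|),\sqrt{T(|x|)/n}\}$, with $T(t)=\mathbb P(|\langle X,\theta\rangle|\ge t)$, over the dyadic shells $2^jM_\theta^{1/(2p)}<|x|\le 2^{j+1}M_\theta^{1/(2p)}$, using $T(|x|)\le S_j$ there. That last step needs the level sets to contain those of $|F_\theta^{-1}|$, which a decomposition by $|G_\theta^{-1}|$ alone does not give.

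Next, multiply by the weight, which is $\lesssim_p 2^{k(p-1)}M_\theta^{(p-1)/(2p)}$ on the $k$-th layer, and exchange the order of summation. This gives $M_\theta^{1/2}\sum_j 2^{jp}\min\{S_j,\sqrt{S_j/n}\}$. Each scale now contributes $2^{jp}\sqrt{S_j/n}$, which is square-summable since $\sum_j 2^{2pj}S_j\lesssim_p 1$. Cauchy--Schwarz over $j\le J_n$ then yields $\sqrt{(J_n+1)/n}$, and the remaining scales contribute at most $2^{-pJ_n}$. Your layer terms $t_k^{(p-1)/(2p)}$, by contrast, are only controlled in $\ell^{2p/(p-1)}$. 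That is exactly where the factor $J_n^{1/(2p)}$ is lost.
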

\begin{proof}[Proof sketch.]
Set $M_\theta
\coloneqq
M_{\mu,2p}(\theta)+M_{\nu,2p}(\theta)$. The case $M_\theta=0$ is immediate, so we assume $M_\theta>0$. 

The proof starts from the following observation. In the two preceding regimes, the peeling argument is based on the size of $\left|G^{-1}_{\theta}\right|$. More precisely, for $t>0$, Markov's inequality gives an upper bound for the Lebesgue measure of the set $\left\{u\in (0,1):\left|G^{-1}_{\theta}(u)\right|\ge t\right\}$ in terms of $M_{\nu,r}(\theta)$ and $t$. Thus, the measure of each region can be controlled using only the information of $G^{-1}_{\theta}$. At the critical order $r=2p$, however, replacing these measures by their Markov bounds is not sufficient. We therefore need to keep their actual values throughout the argument. The difficulty is that, when controlling the empirical quantile error, we also need to control the regions where $\left|F^{-1}_{\theta}\right|$ is large. If the decomposition is defined only through $G^{-1}_{\theta}$, the resulting layer masses do not capture the regions where $F^{-1}_{\theta}$ is large. To resolve this problem, for $u\in (0,1)$, we introduce $H_{\theta}(u) \coloneqq \max \left\{\left|F^{-1}_{\theta}(u)\right|, \left|G^{-1}(u)\right|\right\}.$
Then, for every $t>0$,
\begin{equation*}
\{H_{\theta}\ge t\} = \left\{\left|F^{-1}_{\theta}\right|\ge t\right\} \cup \left\{\left|G^{-1}_{\theta}\right|\ge t\right\},
\end{equation*}
so the same regions can be used to control both quantiles.

% The proof begins with the following observation: In the two preceding regimes, the peeling argument is based on the size of $G^{-1}_{\theta}$. More precisely, on a layer where $\left|G^{-1}_{\theta}(u) \ge t\right|$, Markov's inequality provides an upper bound on the measure of this regions in terms of $M_{\nu,r}(\theta)$ and $t$, meaning it does not need the information from $F^{-1}_{\theta}$. This is sufficient in the previous arguments. At the critical order $r=2p$, to retain the actual measure of each regin, we need to also control where $\left|F^{-1}_{\theta}\right|$ is large. This suggest using a single decomposition that keeps track of both quantiles. To do that, we define $H_{\theta}(u)\coloneqq \max \left\{\left|F^{-1}_{\theta}(u)\right|,\left|G^{-1}_{\theta}(u)\right|\right\}$ for $u \in (0,1)$. Then, for every $t>0$, we have
% \begin{equation*}
% \{H_{\theta} \ge t\} =\left\{ \left|F_{\theta}^{-1}\ge t\right|\right\} \cup \left\{ \left|G_{\theta}^{-1}\ge t\right|\right\},
% \end{equation*}
% so the same regions can be used throughout the argument. Moreover, 
% \begin{equation*}
% \int_{0}^{1}H_{\theta}(u)^{2p}du \le M_{\mu,2p}(\theta) +M_{\nu,2p}(\theta) =M_{\theta},
% \end{equation*}
% so introducing $H_{\theta}$ does not require any additional moment control.

We now decompose $(0,1)$ into the central region
$
E^*=\{H_\theta<M_\theta^{1/(2p)}\}
$
and the layers
\begin{equation*}
E_k= \left\{
2^kM_\theta^{1/(2p)} \le H_\theta
<2^{k+1}M_\theta^{1/(2p)}\right\},
\qquad k\ge0.
\end{equation*}
On $E^*$, the weight is bounded by $M_\theta^{(p-1)/(2p)}$, so the global empirical measure estimation bound in $\operatorname W_1$ in Lemma~\ref{lem:projected-empirical-rate} gives a contribution of order
$M_\theta^{1/2}n^{-1/2}$.

Let $\lambda$ denote the Lebesgue measure on $(0,1)$. For $k\ge0$, define
\begin{equation*}
L_k\coloneqq \left\{ u\in(0,1):
H_\theta(u)\ge 2^kM_\theta^{1/(2p)}
\right\}, \qquad S_k\coloneqq\lambda(L_k),
\qquad m_k\coloneqq \min\left\{S_k,\sqrt{\frac{S_k}{n}}
\right\}.
\end{equation*}
The key point is to retain the actual tail masses $S_k$ rather than
replace them immediately by their Markov bounds as in the proof of Lemma~\ref{lem:weighted-quantile}. An auxiliary
localized estimate, proved in the appendix, gives
\begin{equation*}
\mathbb E \int_{L_k} \left|
F_{n,\theta}^{-1}(u)-F_\theta^{-1}(u)
\right|du\lesssim_p M_\theta^{1/(2p)}\sum_{j\ge k}2^jm_j.
\end{equation*}
Since the weight on $E_k$ is bounded by a constant multiple of
$2^{k(p-1)}M_\theta^{(p-1)/(2p)}$, summing over the layers and
exchanging the order of summation yield
\begin{align*}
\sum_{k\ge0}2^{k(p-1)} \sum_{j\ge k}2^jm_j &= \sum_{j\ge0}2^jm_j\sum_{k=0}^{j}2^{k(p-1)}\lesssim_p \sum_{j\ge0}2^{jp}m_j.
\end{align*}
Thus, it remains to control
$M_\theta^{1/2}\sum_{j\ge0}2^{jp}m_j.$ By the definition of $S_j$ and Tonelli's theorem, we have
\begin{align*}
\sum_{j\ge0}2^{2pj}S_j =\int_{0}^{1}\sum_{j\ge 0}2^{2pj}\mathbbm{1}_{\left\{H_{\theta}(u)\ge 2^jM_{\theta}^{1/(2p)}\right\}}du=\int_{0}^{1}\sum_{\substack{j\ge 0\\
2^j \le \frac{H_{\theta}(u)}{M_{\theta}^{1/(2p)}}}}2^{2pj}du&\lesssim_p
\frac{1}{M_\theta}\int_0^1 H_\theta(u)^{2p}du.
\end{align*}
The definition of $H_\theta$ and the quantile-moment identity then give
\begin{equation*}
\int_0^1 H_\theta(u)^{2p}du \le M_{\mu,2p}(\theta)+M_{\nu,2p}(\theta)=M_\theta.
\end{equation*}
Hence,
\begin{equation*}
\sum_{j\ge0}2^{2pj}S_j\lesssim_p1.
\end{equation*}

% The projected $2p$-th moment condition yields 
% \begin{align*}
% \sum_{j\ge0}2^{2pj}S_j
% &= \int_0^1 \sum_{j\ge0}2^{2pj}
% \mathbf 1_{\{H_\theta(u)\ge2^jM_\theta^{1/(2p)}\}}
% du\\
% &\lesssim_p\frac{1}{M_\theta}\int_0^1H_\theta(u)^{2p}du\lesssim_p\frac{
% M_{\mu,2p}(\theta)+M_{\nu,2p}(\theta)
% }{M_\theta}\lesssim_p1.
% \end{align*}

For any integer $J_n\ge0$, using
$m_j\le\sqrt{S_j/n}$ for $j\le J_n$, Cauchy--Schwarz and the preceding
summability estimate give
\begin{equation*}
\sum_{j=0}^{J_n}2^{jp}m_j \le \frac{1}{\sqrt n}\sum_{j=0}^{J_n}\left(2^{2pj}S_j\right)^{1/2}\lesssim_p\sqrt{\frac{J_n+1}{n}}.
\end{equation*}
For the remaining layers, using the fact that $m_j\le S_j$ gives
\begin{equation*}
\sum_{j>J_n}2^{jp}m_j\le 2^{-pJ_n}\sum_{j>J_n}2^{2pj}S_j\lesssim_p
2^{-pJ_n}.
\end{equation*}
Consequently,
\begin{equation*}
\sum_{j\ge0}2^{jp}m_j\lesssim_p\sqrt{\frac{J_n+1}{n}}+2^{-pJ_n}.
\end{equation*}
Choosing $J_n=\left\lceil(2p)^{-1}\log_2(n+1)\right\rceil$ makes both terms bounded by a constant multiple of $\sqrt{\log(n+1)/n}.$
Combining this estimate with the contribution of $E^*$ proves the
desired weighted quantile bound. The complete proof is given in
Appendix~\ref{appendix:proofoflem:endpointweightedquantile}.
\end{proof}
Combining Lemmas~\ref{lem:decomposition},
\ref{lem:projected-empirical-ratev2}, and
\ref{lem:endpointweightedquantile}, we obtain the upper bound in
Eq.~\eqref{eq:critical-plugin-risk}, stated in Proposition~\ref{prop:r=2p}.

\begin{proof}[Proof of the upper bound in
Eq.~\eqref{eq:critical-plugin-risk}]
Let $\rho_\ell\coloneqq\sqrt{\log(\ell+1)/\ell}.$ When $p=1$, the result follows from
Lemma~\ref{lem:projected-empirical-ratev2} and the reverse triangle
inequality for $\operatorname W_1$.

For $p>1$, Lemma~\ref{lem:decomposition} bounds the directional error
by
$C_p(\mathsf{A}_{n,\theta}+\mathsf{B}_{m,\theta}+\mathsf{C}_{n,\theta}+\mathsf{D}_{m,\theta}).
$
Lemmas~\ref{lem:projected-empirical-ratev2} and
\ref{lem:endpointweightedquantile}, together with
$\sqrt{u+v}\le\sqrt u+\sqrt v,
$
therefore yield
\begin{equation*}
\mathbb E\left|
\operatorname W_p^p\left(\Pi^\theta_{\#}\widehat\mu_n,
\Pi^\theta_{\#}\widehat\nu_m\right)
-\operatorname W_p^p\left(\Pi^\theta_{\#}\mu,\Pi^\theta_{\#}\nu\right)\right|\lesssim_p
\left[M_{\mu,2p}(\theta)^{1/2}+
M_{\nu,2p}(\theta)^{1/2}\right](\rho_n+\rho_m).
\end{equation*}
Since $n\le m$ and $\ell\mapsto\log(\ell+1)/\ell$ is decreasing,
$\rho_m\le\rho_n$. Integrating over $\mathcal G$ and using
$\mu,\nu\in\mathcal K_{p,2p}(b)$, we obtain
\begin{align*}
\mathcal R_{n,m}^{\mathrm{plug}}
\left(\mathcal K_{p,2p}(b)\right)
\lesssim_p\rho_n\int_{\mathcal{G}}\left[M_{\mu,2p}(\theta)^{1/2}+M_{\nu,2p}(\theta)^{1/2}
\right]d\sigma(\theta)\lesssim_p
b\sqrt{\frac{\log(n+1)}{n}}.
\end{align*}
This proves the desired upper bound.
\end{proof}

\section{Lower-bound analysis}\label{sec:prooflowerbound}
In this section, we establish the lower bounds for the two
noncritical regimes in Theorem~\ref{thm:minimax-noncritical}. The
construction used for $p<r<2p$ remains valid at the endpoint $r=p$
and proves Remark~\ref{rem:minimal-moment-order}. We then consider
the critical regime $r=2p$ and establish the lower bounds in
Eqs.~\eqref{eq:critical-plugin-risk}
and~\eqref{eq:critical-minimax-lower}.

\subsection{The noncritical regimes and the endpoint $r=p$}
\label{sec:noncritical-lower-bounds}
The lower bounds in this subsection rely on Le Cam's two-point
method; see \citet{lecam1973convergence} and
\citet[Chapter~2]{tsybakov2008introduction}. We first record the form
of the two-point bound used below.
\subsubsection{Le Cam's two-point bound}

Let
$
(\mu_0,\nu_0),(\mu_1,\nu_1)\in
\mathcal K_{p,r}(b)\times\mathcal K_{p,r}(b),$
and set
$T_i\coloneqq\operatorname{SW}_p^p(\mu_i,\nu_i)
$ for $i\in\{0,1\}$. Le Cam's
two-point method, combined with Pinsker's inequality, gives
\begin{equation}
\label{eq:lecam}
\mathcal R_{n,m}^{\star}\bigl(\mathcal K_{p,r}(b)\bigr)\ge
\frac{|T_1-T_0|}{4}\left(
1-\sqrt{
\frac{n\operatorname{KL}(\mu_0\|\mu_1)
+m\operatorname{KL}(\nu_0\|\nu_1)}{2}}
\right).
\end{equation}
For completeness, a proof of Eq.~\eqref{eq:lecam} is given in
Appendix~\ref{appendix:proofofeqlecam}.

We apply this bound to two different testing constructions, depending
on the moment regime. Throughout the remainder of this subsection,
let $ e_1=(1,0,\ldots,0)^\top\in\mathbb R^d$ and define $\kappa_{p,d} \coloneqq \int_{\mathbb S^{d-1}}|\langle e_1,\theta\rangle|^p\,\sigma(d\theta).$ We have $\kappa_{p,d}>0$; see for example, \citet[Lemma~25]{tran2025minimax}.
\subsubsection{The regime $r>2p$}
\label{sec:lowerboundr>2p}

Set $a\coloneqq(b/\kappa_{p,d})^{1/p}$ and $h\coloneqq1/(4\sqrt n).$
Consider the two testing pairs
\begin{equation*}
(\mu_0,\nu_0) =\left(
\frac{1}{2}\delta_0+\frac12\delta_{ae_1},
\delta_0\right),\qquad
(\mu_1,\nu_1)=\left( \left(\frac12-h\right)\delta_0
+\left(\frac{1}{2}+h\right)\delta_{ae_1},
\delta_0\right).
\end{equation*}
We first verify that these pairs belong to
$\mathcal K_{p,r}(b)\times\mathcal K_{p,r}(b).$ Clearly, $\nu_0=\nu_1=\delta_0\in\mathcal K_{p,r}(b)$. For
$i\in\{0,1\}$, let
$
\lambda_0\coloneqq1/2
$
and
$
\lambda_1\coloneqq1/2+h.
$
For $i \in \{0,1\}$, since $\lambda_i\le1$ , we have
\begin{align*}
\int_{\mathbb S^{d-1}}
M_{\mu_i,r}(\theta)^{p/r}\,\sigma(d\theta)=\lambda_i^{p/r}a^p\int_{\mathbb S^{d-1}}
|\langle e_1,\theta\rangle|^p\,\sigma(d\theta)=
\lambda_i^{p/r}a^p\kappa_{p,d}
\le b.
\end{align*}
Hence both testing pairs belong to the required parameter space.

Since $\nu_0=\nu_1=\delta_0$, we further have
\begin{equation*}
T_i
=
\operatorname{SW}_p^p(\mu_i,\nu_i)
=
\lambda_i a^p\kappa_{p,d},
\qquad i\in\{0,1\}.
\end{equation*}
Consequently,
\begin{equation*}
|T_1-T_0|
=
h a^p\kappa_{p,d}
=
bh.
\end{equation*}

It remains to control the Kullback--Leibler divergence. Since
$\nu_0=\nu_1$,
$
\operatorname{KL}(\nu_0\|\nu_1)=0.
$
Moreover, a direct calculation gives
\begin{equation*}
\operatorname{KL}(\mu_0\|\mu_1)
=\frac{1}{2}\log \left(\frac{1/2}{1/2-h}\right)+\frac{1}{2}\log\left(\frac{1/2}{1/2+h}\right)=
\frac{1}{2}\log\frac{1}{1-4h^2}
\le
\frac{8}{3}h^2,
\end{equation*}
where the inequality follows from $h\le1/4$ and
$
-\log(1-x)\le x/(1-x) 
$ for $x \in [0,1)$. Substituting these estimates into Eq.~\eqref{eq:lecam}, we obtain
\begin{align*}
\mathcal R_{n,m}^{\star}
\bigl(\mathcal K_{p,r}(b)\bigr)\ge
\frac{bh}{4}
\left(
1-
\sqrt{
\frac{
n\operatorname{KL}(\mu_0\|\mu_1)
}{2}
}
\right)\ge
\frac{bh}{4}
\left(
1-2h\sqrt{\frac n3}
\right)=
\frac{b}{16}
\left(
1-\frac{1}{2\sqrt3}
\right)n^{-1/2}.
\end{align*}
This proves the lower bound in
Theorem~\ref{thm:minimax-noncritical} for $r>2p$.

Although this construction is valid for every $r\ge p$, it yields
the optimal rate only when $r>2p$. The regime $p<r<2p$ requires a
different construction.
\subsubsection{The regime $p<r<2p$ and the endpoint $r=p$}
\label{sec:lowerboundp<r<2p}

Set
$\epsilon\coloneqq 1/(2n)$, $a_\epsilon
\coloneqq\left(\frac{b}{\kappa_{p,d}}\right)^{1/p}\epsilon^{-1/r}$ and $z
\coloneqq\frac{1}{4}
\left(\frac{b}{\kappa_{p,d}}\right)^{1/p}$. 
We consider the following testing pairs:
\begin{equation*}
(\mu_0,\nu_0)=\left(\delta_0,\delta_{ze_1}
\right),\qquad(\mu_1,\nu_1)=\left((1-\epsilon)\delta_0+\epsilon\delta_{a_\epsilon e_1},\delta_{ze_1}
\right).
\end{equation*}
We first verify that both testing pairs belong to
$\mathcal K_{p,r}(b)\times\mathcal K_{p,r}(b).$
Clearly, $\mu_0=\delta_0\in\mathcal K_{p,r}(b)$. Moreover,
\begin{equation*}
\int_{\mathbb S^{d-1}}M_{\nu_0,r}(\theta)^{p/r}\,\sigma(d\theta)
=z^p\kappa_{p,d}=4^{-p}b\le b,
\end{equation*}
and the same holds for $\nu_1=\nu_0$. For $\mu_1$, we have
\begin{align*}
\int_{\mathbb S^{d-1}}M_{\mu_1,r}(\theta)^{p/r}\sigma(d\theta)=
\epsilon^{p/r}a_\epsilon^p\kappa_{p,d}=b.
\end{align*}
Hence both testing pairs belong to the required parameter space.

We further have
\begin{equation*}
T_0=\operatorname{SW}_p^p(\mu_0,\nu_0)=z^p\kappa_{p,d},
\end{equation*}
and
\begin{equation*}
T_1=\operatorname{SW}_p^p(\mu_1,\nu_1)=
\left[(1-\epsilon)z^p+\epsilon(a_\epsilon-z)^p\right]\kappa_{p,d}.
\end{equation*}
Since $\epsilon\le1$, we have $a_\epsilon\ge4z.$
Consequently,
\begin{align*}
T_1-T_0=\epsilon\kappa_{p,d}\left[
(a_\epsilon-z)^p-z^p\right]\ge
\left[\left(\frac34\right)^p-\left(\frac14\right)^p\right]\kappa_{p,d}\epsilon a_\epsilon^p=c_{p,r}b\,n^{-(r-p)/r}.
\end{align*}

We now analyze the Kullback--Leibler terms. Since
$\nu_0=\nu_1$, we have
$\operatorname{KL}(\nu_0\|\nu_1)=0.$
Moreover,
\begin{equation*}
\operatorname{KL}(\mu_0\|\mu_1)
=\mu_0\left(\{0\}\right)\log\left(\frac{\mu_0(\{0\})}{\mu_1(\{0\})}\right)=-\log(1-\epsilon)\le \frac{\epsilon}{1-\epsilon}\le
2\epsilon=\frac{1}{n},
\end{equation*}
where we used $-\log (1-x) \le x/(1-x)$ for $x\in [0,1)$ and $0<\epsilon\le1/2$. Combining these estimates with
Eq.~\eqref{eq:lecam}, we obtain
\begin{align*}
\mathcal R_{n,m}^{\star}\bigl(\mathcal K_{p,r}(b)\bigr)\ge\frac{|T_1-T_0|}{4}\left(1-\sqrt{\frac{n\operatorname{KL}(\mu_0\|\mu_1)}{2}
}\right) \gtrsim_{p,r}b\,n^{-(r-p)/r}.
\end{align*}
This proves the lower bound in
Theorem~\ref{thm:minimax-noncritical} for $p<r<2p$.

In addition, both testing pairs consist of distinct distributions,
which proves Remark~\ref{rem:distinct-distributions}. At the endpoint
$r=p$, the same construction gives $\mathcal R_{n,m}^{\star}\bigl(\mathcal K_{p,p}(b)\bigr)
\gtrsim_p b,$
and therefore proves
Remark~\ref{rem:minimal-moment-order}.
\subsection{The critical regime $r=2p$}
\label{sec:critical-lower-bounds}
At the critical moment order, we establish two different lower bounds.
We first prove the matching lower bound for the plug-in risk in
Eq.~\eqref{eq:critical-plugin-risk}. We then establish the minimax
lower bound in Eq.~\eqref{eq:critical-minimax-lower}.
\subsubsection{Lower bound for the plug-in risk}
At the critical order $r=2p$, the construction used for the regime
$r>2p$ yields only the parametric lower bound $bn^{-1/2}$ and therefore
does not match the plug-in upper bound in
Eq.~\eqref{eq:critical-plugin-risk}. To obtain the matching lower
bound, we choose a pair $(\mu^*,\nu^*)$
such that $\mu^*=\nu^*\in\mathcal K_{p,2p}(b)$. Since
$\operatorname{SW}_p^p(\mu^*,\nu^*)=0
$, the definition of the plug-in risk gives
\begin{equation*}
\mathcal R_{n,m}^{\mathrm{plug}}\left(\mathcal K_{p,2p}(b)\right)\ge
\mathbb E\operatorname{SW}_p^p\left(\widehat\mu_n^*,\widehat\nu_m^*\right),
\end{equation*}
where $\widehat\mu_n^*$ and $\widehat\nu_m^*$ are the empirical
measures associated with two independent samples from $\mu^*$.
Thus, it suffices to construct
$\mu^*\in\mathcal K_{p,2p}(b)$ such that
\begin{equation*}
\mathbb E\operatorname{SW}_p^p\left(
\widehat\mu_n^*,\widehat\nu_m^*\right)\gtrsim_pb\sqrt{\frac{\log(n+1)}{n}}.
\end{equation*}

We first construct such a distribution in one dimension. The general
$d$-dimensional result is then obtained by embedding the
one-dimensional construction into $\mathbb R^d$ through the map
$T(x)=xe_1.$
Before proceeding with the construction, we record a two-sample
consequence of \citet[Lemma~3.8]{bobkov2019one}, which provides the
lower bound on empirical tail fluctuations needed in our
one-dimensional construction. The proof is given in
Appendix~\ref{appendix:proofoflem:extendedlemma3.8ledoux}.
\begin{lemma}\label{lem:extendedlemma3.8ledoux}
Let $0<s \le \frac{1}{2}$ and $n\le m$ such that $ns \ge 1$. Let $\widetilde N\sim \text{Binomial}(n,s)$ and $\widetilde M\sim \text{Binomial}(m,s)$
be independent. Then there exists a universal constant $c>0$ such that
\begin{equation*}
\mathbb E\left|\frac{\widetilde N}{n}-\frac{\widetilde M}{m} \right| \ge c\sqrt{\frac{s}{n}}.
\end{equation*}
\end{lemma}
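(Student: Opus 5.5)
Our plan is to reduce the statement to a lower bound on $\mathbb E|Z|$ for the centered difference
\[
Z\coloneqq \Bigl(\frac{\widetilde N}{n}-s\Bigr)-\Bigl(\frac{\widetilde M}{m}-s\Bigr),
\]
whose mean is zero. By independence its variance is $s(1-s)(1/n+1/m)\ge s(1-s)/n\ge s/(2n)$, using $s\le 1/2$. Variance alone does not control the first absolute moment, so we will use a standard moment comparison: for a centered random variable,
\[
\mathbb E Z^2\le (\mathbb E|Z|)^{2/3}(\mathbb E Z^4)^{1/3}
\]
(Hölder applied to $Z^2=|Z|^{2/3}|Z|^{4/3}$). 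This gives $\mathbb E|Z|\ge (\mathbb E Z^2)^{3/2}/(\mathbb E Z^4)^{1/2}$. It then suffices to show that $\mathbb E Z^4\lesssim (s/n)^2$ whenever $ns\ge1$.

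For the fourth moment, we write $Z=U-V$ with $U=\widetilde N/n-s$ and $V=\widetilde M/m-s$, which are independent and centered. Then
\[
\mathbb E Z^4=\mathbb E U^4+6\,\mathbb E U^2\,\mathbb E V^2+\mathbb E V^4 ,
\]
since the odd cross terms vanish by independence and centering. For a Binomial$(n,s)$ variable, the central fourth moment is $ns(1-s)[1+3(n-2)s(1-s)]\le ns+3n^2s^2$. Dividing by $n^4$ gives
\[
\mathbb E U^4\le \frac{s}{n^3}+\frac{3s^2}{n^2}\le \frac{4s^2}{n^2},
\]
where the last step uses $ns\ge1$, i.e.\ $s/n^3\le s^2/n^2$. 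For $V$ we need the same comparison with $m$ in place of $n$, and this holds because $ms\ge ns\ge1$; it gives $\mathbb E V^4\le 4s^2/m^2\le 4s^2/n^2$. Finally, $\mathbb E U^2\,\mathbb E V^2\le (s/n)(s/m)\le s^2/n^2$. Altogether, $\mathbb E Z^4\le 14\,s^2/n^2$.

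Combining the two estimates,
\[
\mathbb E|Z|\ge \frac{(s/(2n))^{3/2}}{\sqrt{14}\,s/n}=c\sqrt{s/n},
\]
with the universal constant $c=2^{-3/2}14^{-1/2}$. This proves the claim.

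The only step that needs care is the fourth-moment bound. It is exactly where the hypothesis $ns\ge1$ enters: without it, the $s/n^3$ term dominates (the Poisson-like regime) and the inequality fails. An alternative route would go through \citet[Lemma~3.8]{bobkov2019one} for the one-sample quantity $\mathbb E|\widetilde N/n-s|$ together with a conditioning or symmetrization argument on $V$. The self-contained Paley--Zygmund-type moment argument above avoids that and handles the second sample cleanly, since $m\ge n$ only makes $V$ smaller.
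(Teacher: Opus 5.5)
Your argument is correct, but it takes a different route from the paper.

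The paper first conditions on $\widetilde N$ and applies Jensen's inequality to get $\mathbb E|\widetilde N/n-\widetilde M/m|\ge \mathbb E|\widetilde N/n-s|$. In other words, it simply discards the second sample. It then invokes \citet[Lemma~3.8]{bobkov2019one}, namely $\mathbb E|\widetilde N-ns|\ge c_0\min\{2ns(1-s),\sqrt{ns(1-s)}\}$, and uses $ns\ge1$ and $s\le 1/2$ to show that this minimum is at least $\sqrt{ns}/\sqrt2$.

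You instead work with the full difference $Z$ and prove the anti-concentration directly, via the H\"older comparison $\mathbb E|Z|\ge(\mathbb E Z^2)^{3/2}/(\mathbb E Z^4)^{1/2}$ and the exact binomial fourth central moment. All of your computations check out. Note that the comparison inequality does not actually require $Z$ to be centered.

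What each approach buys:
\begin{itemize}
\item The paper's route is shorter and never uses $n\le m$, since the Jensen step shows that an independent second sample can only increase the mean absolute deviation. It does, however, rely on the external lemma.
\item Your route is fully self-contained and gives an explicit constant $c=1/\sqrt{112}$, in effect reproving the needed case of the Bobkov--Ledoux lemma. It does use $n\le m$, to get $ms\ge1$ and $1/m\le 1/n$ when bounding $\mathbb E V^4$ and $\mathbb E U^2\,\mathbb E V^2$.
\end{itemize}

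Combining the two --- the paper's Jensen reduction followed by your moment comparison applied to $U$ alone --- would remove the dependence on $n\le m$ and improve the constant to $2^{-5/2}$.
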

We now present the desired one-dimensional construction.
\begin{lemma}\label{lem:onedimensionalcaser=2p}
Let $p\ge 1$ and $B>0$. Assume that $1 \le n \le m$. Then there exists a probability measure $P$ on $\mathbb{R}$ such that
\begin{equation*}
\left(\int_{\mathbb{R}} \left|x\right|^{2p}dP(x)\right)^{1/2} \le B.
\end{equation*}
Moreover, if $\widehat P_n$ and $\widehat P_m$ are independent
empirical measures based on $n$ and $m$ i.i.d.\ samples from $P$,
respectively, then
\begin{equation*}
\mathbb{E}\operatorname{W}^p_p(\widehat{P}_n,\widehat{P}_m)\gtrsim_pB\sqrt{\log(n+1)n^{-1}}.
\end{equation*}
\end{lemma}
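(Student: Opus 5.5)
We will construct $P$ with a heavy tail tuned to the critical moment $2p$ and bound the Wasserstein error from below by the contributions of all dyadic tail levels. By scaling ($x\mapsto \lambda x$ multiplies $\operatorname W_p^p$ by $\lambda^p$ and $(\int|x|^{2p})^{1/2}$ by $\lambda^p$) we may take $B$ of order one. For $n$ bounded the claim is trivial once $P$ is nondegenerate, so we assume $n$ is large.

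\textbf{Construction.} Take $P$ supported on $\{0\}\cup\{x_k:k\ge1\}$ with $x_k=2^{k}$ and masses $P(\{x_k\})=c\,2^{-2pk}/k^{2}$ (rest at $0$), so that $\int|x|^{2p}dP\asymp\sum_k k^{-2}<\infty$. A variant with masses truncated at level $K_n\asymp(2p)^{-1}\log_2 n$ and $P(\{x_k\})=c\,2^{-2pk}/K_n$ also works (it depends on $n$, which the statement allows since $P$ may depend on $n$). We will use the truncated version, since it gives exactly $\log n$.

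\textbf{Lower bound via the CDF.} In one dimension and for $p\ge1$, we use
$$
\operatorname W_p^p(\widehat P_n,\widehat P_m)\ge c_p\int_{\mathbb R}|x|^{p-1}\,|F_n(x)-F_m(x)|\,dx
$$
for measures on $[0,\infty)$ with common atom at $0$. This is justified by comparing quantiles: where the tail counts differ, the quantile functions differ by at least the gap between consecutive support points $x_{k}-x_{k-1}\asymp x_k$ on a set of $u$ of measure $|\bar F_n(x_k)-\bar F_m(x_k)|$, and $|F_n^{-1}-F_m^{-1}|^p\ge$ the corresponding contribution. Concretely, we will show
$$
\operatorname W_p^p\ge c_p\sum_{k}(x_k-x_{k-1})^p\,\bigl|\widehat P_n([x_k,\infty))-\widehat P_m([x_k,\infty))\bigr|,
$$
using that each $u$ in the symmetric difference of the tail quantile sets forces a jump across $[x_{k-1},x_k)$. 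This holds because the set of $u$ with $F_n^{-1}(u)\ge x_k>F_m^{-1}(u)$ or vice versa has measure at least the difference of tail masses, and on it $|F_n^{-1}-F_m^{-1}|\ge x_k-x_{k-1}$. Summing over $k$ overcounts each $u$ by at most the number of crossed levels. Since $|a-b|^p\ge\sum$ of $p$-th powers of increments along a path, this is fine for $p\ge1$.

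\textbf{Applying Lemma~\ref{lem:extendedlemma3.8ledoux}.} The tail count $n\widehat P_n([x_k,\infty))$ is Binomial$(n,s_k)$ with $s_k\asymp 2^{-2pk}/K_n$. For $k\le K_n$ with $ns_k\ge1$, which holds for all $k\le K_n$ after adjusting constants, Lemma~\ref{lem:extendedlemma3.8ledoux} gives expectation at least $c\sqrt{s_k/n}$. Hence
$$
\mathbb E\operatorname W_p^p\gtrsim_p\sum_{k\le K_n}2^{pk}\sqrt{\frac{2^{-2pk}}{K_n n}}=\frac{K_n}{\sqrt{K_n n}}=\sqrt{\frac{K_n}{n}}\asymp\sqrt{\frac{\log(n+1)}{n}}.
$$
The moment budget gives $\int|x|^{2p}dP\asymp\sum_{k\le K_n}2^{2pk}\cdot2^{-2pk}/K_n\asymp1$, which balances exactly.

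\textbf{Main obstacle.} The delicate step is the CDF-to-$\operatorname W_p^p$ comparison with geometric support points, i.e.\ rigorously showing that the level contributions add rather than cancel. We handle it via monotone quantile functions, using $|a-b|^p\ge\sum$ of $p$-th powers of increments along the dyadic grid. We must also check the constraint $ns_k\ge1$, which fixes the choice $K_n=\lfloor(2p)^{-1}\log_2(n/C)\rfloor$.
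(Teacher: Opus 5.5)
Your approach is essentially the paper's. You take a discrete $P$ on a geometric grid and split the $2p$-th moment budget equally over $\asymp\log n$ dyadic levels. You then use the cut inequality $\operatorname{W}_p^p(\widehat P_n,\widehat P_m)\ge\sum_k(x_k-x_{k-1})^p|\widehat P_n(A_k)-\widehat P_m(A_k)|$ and apply Lemma~\ref{lem:extendedlemma3.8ledoux} level by level to obtain order $\sqrt{L/n}$.

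The one difference in route is minor. The paper proves the cut inequality for an arbitrary coupling on the grid and takes the infimum. You prove it on the quantile coupling, using that the sets $\{u:F^{-1}(u)\ge x_k\}$ are nested upper intervals. Both are valid.

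Several details need correcting, however.

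\textbf{The choice of $K_n$.} With $K_n=\lfloor(2p)^{-1}\log_2(n/C)\rfloor$ and masses $c\,2^{-2pk}/K_n$, where $c\lesssim 1$ is fixed by the moment budget, you get $ns_{K_n}\asymp nc\,2^{-2pK_n}/K_n\asymp cC/K_n\to0$. So $ns_k\ge1$ fails on the top levels, and no fixed adjustment of constants absorbs the $1/K_n$ factor. You need $2^{2pK_n}K_n\lesssim n$. One option is the paper's $K_n=\lfloor(4p)^{-1}\log_2 n\rfloor$, which gives $ns_{K_n}\gtrsim\sqrt n/\log n$. Alternatively, keep your $K_n$ and sum only over levels with $ns_k\ge1$; there are still $\asymp\log n$ of them.

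\textbf{The general CDF inequality.} The claimed bound $\operatorname{W}_p^p\ge c_p\int|x|^{p-1}|F_n-F_m|\,dx$ for measures on $[0,\infty)$ sharing an atom at $0$ is false. Compare $\frac12\delta_0+\frac12\delta_x$ with $\frac12\delta_0+\frac12\delta_{x+\epsilon}$: the left side is $\epsilon^p/2$, while the right side is of order $x^{p-1}\epsilon$. Only the grid version, where gaps satisfy $x_k-x_{k-1}\asymp x_k$, holds. That is what you actually prove, so simply drop the general statement.

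\textbf{The untruncated measure.} The $n$-independent measure with masses $c\,2^{-2pk}/k^2$ does not ``also work.'' It gives only order $\log\log n/\sqrt n$. In fact, for any fixed $P$ with finite $2p$-th moment, the argument of Lemma~\ref{lem:projected-empirical-ratev2} gives $o(\sqrt{\log(n+1)/n})$. So the $n$-dependence of $P$ is essential, not just convenient.

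\textbf{Bounded $n$.} The bound must hold uniformly in $m\ge n$, and Lemma~\ref{lem:extendedlemma3.8ledoux} does not cover $n=1$, since $ns\ge1$ and $s\le1/2$ are incompatible there. The paper handles this with $P=\frac12\delta_0+\frac12\delta_a$ and the conditioning--Jensen step $\mathbb E|N/n-M/m|\ge\mathbb E|N/n-1/2|$. That step is what your ``trivial once $P$ is nondegenerate'' should be replaced by.
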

\begin{proof}[Proof sketch.]
We construct $P$ as a discrete probability measure supported on $0=x_0<x_1<\cdots<x_L$. 

We denote $A_k=[x_k,\infty)$ and $S_k=P(A_k) $ for $k=1,\ldots,L$. The key observation, proved in
Appendix~\ref{appendix:proofoflemmaonedimensionalcaser=2p}, is that for
any two probability measures $\widehat P_n$ and $\widehat P_m$
supported on this grid,
\begin{equation*}
\operatorname W_p^p(\widehat P_n,\widehat P_m)\ge\sum_{k=1}^{L}(x_k-x_{k-1})^p\left|\widehat P_n(A_k)-\widehat P_m(A_k)\right|.
\end{equation*}
Taking expectation, and noting that
$n\widehat{P}_n(A_k)\sim\operatorname{Binomial}(n,S_k)$
and
$m\widehat P_m(A_k)\sim\operatorname{Binomial}(m,S_k)
$ are independent, Lemma~\ref{lem:extendedlemma3.8ledoux} yields, provided
$0<S_k\le1/2$ and $nS_k\ge1,$
\begin{equation}\label{eq:usefuleqconstructionr=2p}
\mathbb E\operatorname W_p^p(\widehat P_n,\widehat P_m)\gtrsim\sum_{k=1}^{L}(x_k-x_{k-1})^p \sqrt{\frac{S_k}{n}}.
\end{equation}

This suggests constructing the support points $(x_k)_{k=1}^L$ and
the tail probabilities $(S_k)_{k=1}^L$ so that the following
conditions hold simultaneously:
\begin{itemize}
\item The sequence $(S_k)_{k=1}^L$ is nonincreasing and defines valid tail
probabilities for a probability measure supported on $0=x_0<x_1<\cdots<x_L.$
\item For every $k=1,\ldots,L$,$0<S_k\le1/2$ and $nS_k\ge1,$ so that Lemma~\ref{lem:extendedlemma3.8ledoux} can be applied.
\item The resulting probability measure satisfies the moment  constraint.
\item The right-hand side of Eq.~\eqref{eq:usefuleqconstructionr=2p}
attains the desired lower-bound order $B\sqrt{\log(n+1)n^{-1}}.$
\end{itemize}

We now make this choice more explicit. Set $S_{L+1}=0$. Then,
\begin{equation*}
\int_{\mathbb R}|x|^{2p}dP(x) =\sum_{k=1}^{L}(S_k-S_{k+1})x_k^{2p}\le\sum_{k=1}^{L}S_kx_k^{2p}.
\end{equation*}
A simple way to satisfy the moment constraint is therefore to
distribute the available budget equally among the $L$ terms by
choosing
\begin{equation*}
S_kx_k^{2p}=\frac{B^2}{L},\qquad k=1,\ldots,L.
\end{equation*}
If the support points are chosen in geometric progression,
$x_k=R2^k,$
then $x_k-x_{k-1}\ge x_k/2,$
and consequently
\begin{equation*}
(x_k-x_{k-1})^p\sqrt{\frac{S_k}{n}}\ge\frac{B}{2^p\sqrt{Ln}}.
\end{equation*}
Hence, summing over the $L$ scales gives
\begin{equation*}
\sum_{k=1}^{L}(x_k-x_{k-1})^p
\sqrt{\frac{S_k}{n}}\ge\frac{B\sqrt L}{2^p\sqrt n}.
\end{equation*}
It remains to choose $L$ and $R$ so that all the preceding
conditions are satisfied while allowing $L$ to be proportional to $\log(n+1)$. An explicit choice, together with the complete verification,
is given in Appendix~\ref{appendix:proofoflemmaonedimensionalcaser=2p}.
\end{proof}
With Lemma~\ref{lem:onedimensionalcaser=2p} in hand, we now lift the
one-dimensional construction to $\mathbb R^d$.
\begin{proof}[Proof of the lower bound in
Eq.~\eqref{eq:critical-plugin-risk}]
Set
$B\coloneqq b/\kappa_{p,d}.$
By Lemma~\ref{lem:onedimensionalcaser=2p}, there exists a probability
measure $P$ on $\mathbb R$ such that
$(\mathbb E_{X\sim P}|X|^{2p})^{1/2}\le B
$
and\begin{equation*}
\mathbb E\operatorname W_p^p(\widehat P_n,\widehat P_m)\gtrsim_p B\sqrt{\frac{\log(n+1)}{n}},
\end{equation*}
where $\widehat P_n$ and $\widehat P_m$ are independent empirical
measures based on $n$ and $m$ i.i.d.\ samples from $P$, respectively.

Let $T:\mathbb R\to\mathbb R^d$ be defined by $T(x)=xe_1$, and set
$\mu=\nu\coloneqq T_{\#}P.
$
For $\theta\in\mathbb S^{d-1}$, writing
$
\theta_1=\langle e_1,\theta\rangle,
$
we have
\begin{align*}
\int_{\mathbb S^{d-1}}
M_{\mu,2p}(\theta)^{1/2}\sigma(d\theta)=
\left(\mathbb E_{X\sim P}|X|^{2p}
\right)^{1/2}\int_{\mathbb S^{d-1}}
|\theta_1|^p\,\sigma(d\theta)\le
\kappa_{p,d}B=b.
\end{align*}
Hence $\mu=\nu\in\mathcal K_{p,2p}(b)$.

Moreover, if
$
\widehat\mu_n=T_{\#}\widehat P_n
$
and
$\widehat\nu_m=T_{\#}\widehat P_m,$
then
\begin{align*}
\mathbb E
\operatorname{SW}_p^p(\widehat\mu_n,\widehat\nu_m)=\kappa_{p,d}\mathbb E
\operatorname W_p^p(\widehat P_n,\widehat P_m)\gtrsim_p  b\sqrt{\frac{\log(n+1)}{n}}.
\end{align*}
Since $\mu=\nu$,
$
\operatorname{SW}_p^p(\mu,\nu)=0.
$
Therefore,
\begin{equation*}
\mathcal R_{n,m}^{\mathrm{plug}}
\left(\mathcal K_{p,2p}(b)
\right)\gtrsim_p b\sqrt{\frac{\log(n+1)}{n}}.
\end{equation*}
\end{proof}

\subsubsection{Minimax lower bound}
Although the two-point construction used in the regime $r>2p$
remains valid at the critical order $r=2p$, it yields only the
parametric lower bound $bn^{-1/2}$. Motivated by
\citet{niles2022estimation} and \citet{kim2022minimax}, we replace the single
alternative in Le Cam's two-point argument with a finite mixture of
alternatives. The following lemma formalizes this reduction. Its proof is deferred to
Appendix~\ref{appendix:proofoflem:twopointmixtureversion}.
\begin{lemma}\label{lem:twopointmixtureversion}
Let $\mathcal A$ be a finite index set with $|\mathcal A|\ge 2$, and let
$$ \mu_0,\nu_0,\{\mu_\alpha:\alpha\in\mathcal A\}
\subseteq \mathcal K_{p,2p}(b)
$$
satisfy $\mu_0=\nu_0$ and
$$ \operatorname{SW}_p^p(\mu_\alpha,\nu_0) = \operatorname{SW}_p^p(\mu_\beta,\nu_0) >0
\quad \text{for all }\alpha,\beta\in\mathcal A.
$$
Fix any $\alpha_\star\in\mathcal A$ and define
$$ \mathsf P_0 \coloneqq \mu_0^{\otimes n},
\quad \mathsf P_1 \coloneqq \frac{1}{|\mathcal A|} \sum_{\alpha\in\mathcal A}
\mu_\alpha^{\otimes n}.
$$
Then
$$ \mathcal R_{n,m}^{\star} \left(\mathcal K_{p,2p}(b)\right) \ge \frac{\operatorname{SW}_p^p(\mu_{\alpha_\star},\nu_0)}{4} \left[
1-\operatorname{TV}(\mathsf P_0,\mathsf P_1) \right].
$$
\end{lemma}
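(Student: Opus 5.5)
The plan is to reduce to a binary testing problem between two hypotheses on the full two-sample law, with the $\nu$-sample uninformative, and then run the usual Le Cam reduction from estimation to testing. Write $\Delta\coloneqq\operatorname{SW}_p^p(\mu_{\alpha_\star},\nu_0)$; by assumption $\operatorname{SW}_p^p(\mu_\alpha,\nu_0)=\Delta$ for every $\alpha\in\mathcal A$, while $\operatorname{SW}_p^p(\mu_0,\nu_0)=0$ because $\mu_0=\nu_0$.

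Fix an arbitrary estimator $\widehat T$, a measurable function of $(X_{1:n},Y_{1:m})$. Define the joint laws $\mathsf Q_0\coloneqq\mu_0^{\otimes n}\otimes\nu_0^{\otimes m}$ and $\mathsf Q_1\coloneqq|\mathcal A|^{-1}\sum_\alpha\mu_\alpha^{\otimes n}\otimes\nu_0^{\otimes m}=\mathsf P_1\otimes\nu_0^{\otimes m}$. Since every pair $(\mu_\alpha,\nu_0)$ and $(\mu_0,\nu_0)$ lies in $\mathcal K_{p,2p}(b)^2$, the worst-case risk is at least
\begin{equation*}
\frac12\,\mathbb E_{\mathsf Q_0}|\widehat T|+\frac12\cdot\frac1{|\mathcal A|}\sum_{\alpha}\mathbb E_{\mu_\alpha,\nu_0}|\widehat T-\Delta|
=\frac12\,\mathbb E_{\mathsf Q_0}|\widehat T|+\frac12\,\mathbb E_{\mathsf Q_1}|\widehat T-\Delta|.
\end{equation*}
Here the equality uses that every alternative shares the same target value $\Delta$; this is the only place that hypothesis is needed.

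Next use Markov's inequality: $\mathbb E_{\mathsf Q_0}|\widehat T|\ge\frac\Delta2\mathsf Q_0(|\widehat T|\ge\Delta/2)$ and $\mathbb E_{\mathsf Q_1}|\widehat T-\Delta|\ge\frac\Delta2\mathsf Q_1(|\widehat T-\Delta|\ge\Delta/2)$. Let $A\coloneqq\{|\widehat T|<\Delta/2\}$. By the triangle inequality, $A$ is contained in $\{|\widehat T-\Delta|>\Delta/2\}$. The sum is then at least $\frac\Delta4[\mathsf Q_0(A^c)+\mathsf Q_1(A)]\ge\frac\Delta4[1-\operatorname{TV}(\mathsf Q_0,\mathsf Q_1)]$, using $\mathsf Q_0(A)-\mathsf Q_1(A)\le\operatorname{TV}$. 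Taking the infimum over $\widehat T$ gives the bound with $\operatorname{TV}(\mathsf Q_0,\mathsf Q_1)$ in place of $\operatorname{TV}(\mathsf P_0,\mathsf P_1)$.

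Finally, $\mathsf Q_i=\mathsf P_i\otimes\nu_0^{\otimes m}$ share the same second factor, so $\operatorname{TV}(\mathsf Q_0,\mathsf Q_1)=\operatorname{TV}(\mathsf P_0,\mathsf P_1)$. To see this, take a density of each $\mathsf P_i$ with respect to a common dominating measure and integrate out the $Y$-coordinates. The only point needing care is the averaging step, which rests on the common value of $\operatorname{SW}_p^p(\mu_\alpha,\nu_0)$ so that the mixture acts as a single alternative with a well-defined target. The rest is routine, and positivity of $\Delta$ makes the bound nontrivial.
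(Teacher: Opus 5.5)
Your proof is correct and follows the paper's argument: you average over the alternatives, using the common value $\operatorname{SW}_p^p(\mu_\alpha,\nu_0)$, to pass to the mixture $\mathsf P_1\otimes\nu_0^{\otimes m}$, then apply Le Cam's two-point reduction, and finally drop the common factor $\nu_0^{\otimes m}$ from the total variation. The only cosmetic difference is that you use a Markov bound with the threshold event $\{|\widehat T|<\Delta/2\}$ where the paper uses the closest-point test $\Psi$; these are the same test up to the boundary and yield the same constant $1/4$.
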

The preceding lemma shows that it suffices to construct a family of
alternatives that are equally separated from the null in the powered
Sliced Wasserstein functional, while the corresponding mixture
observation law remains close to the null law in total variation.
Motivated by the Rademacher-sign construction in
\citet[Appendix~H]{kim2022minimax}, we introduce the following
general block construction.

Fix an integer $L\ge1$ and
\begin{equation*}
0<a_1<b_1<a_2<b_2<\cdots<a_L<b_L.
\end{equation*}
For $k=1,\ldots,L$, set $x_k=a_ke_1$ and $y_k=b_ke_1.$ Let $\omega_1,\ldots,\omega_L>0$ and $\gamma_1,\ldots,\gamma_L\ge0$ satisfy
\begin{equation*}
\Omega\coloneqq\sum_{k=1}^L\omega_k\le1,\qquad\gamma_k\le\frac{\omega_k}{2},\quad k=1,\ldots,L.
\end{equation*}
Define
\begin{equation*}
\mu_0\coloneqq(1-\Omega)\delta_0+\sum_{k=1}^L\frac{\omega_k}{2}\left(\delta_{x_k}+\delta_{y_k}\right),
\end{equation*}
and, for
$\varepsilon=(\varepsilon_1,\ldots,\varepsilon_L)\in\{-1,1\}^L,$
define
\begin{equation*}
\mu_\varepsilon\coloneqq
(1-\Omega)\delta_0+\sum_{k=1}^L\left(\frac{\omega_k}{2}+\varepsilon_k\gamma_k\right)\delta_{x_k}+\sum_{k=1}^L\left(\frac{\omega_k}{2}-\varepsilon_k\gamma_k\right)\delta_{y_k}.
\end{equation*}
The following lemma records the two properties of this construction
that will be used below; its proof is deferred to
Appendix~\ref{appendix:proofoflem:rademacher-block-construction}.

\begin{lemma}\label{lem:rademacher-block-construction}
For every $\varepsilon\in\{-1,1\}^L$, $\mu_0$ and $\mu_\varepsilon$
are probability measures and
\begin{equation*}
\operatorname{SW}_p^p(\mu_\varepsilon,\mu_0)=\kappa_{p,d}\sum_{k=1}^L
\gamma_k(b_k-a_k)^p.
\end{equation*}
Moreover, defining
\begin{equation*}
\mathsf P_0\coloneqq\mu_0^{\otimes n},\qquad\mathsf P_1\coloneqq
2^{-L}\sum_{\varepsilon\in\{-1,1\}^L}
\mu_\varepsilon^{\otimes n},
\end{equation*}
we have
\begin{equation*}
\operatorname{TV}(\mathsf P_0,\mathsf P_1)
\le\frac12\left[\exp\left(8n^2\sum_{k=1}^L\frac{\gamma_k^4}{\omega_k^2}
\right)-1\right]^{1/2}.
\end{equation*}
\end{lemma}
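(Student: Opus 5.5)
The plan is to check the three claims in turn: that these are probability measures, the closed form for the sliced distance, and the total-variation bound. The first two are direct computations. The last goes through the $\chi^2$-divergence and a Rademacher moment generating function, in the style of \citet[Appendix~H]{kim2022minimax}.

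\textbf{Probability measures.} Since $0\le\gamma_k\le\omega_k/2$, every weight $\omega_k/2\pm\varepsilon_k\gamma_k$ is nonnegative. The weight $1-\Omega$ is also nonnegative because $\Omega\le1$. The $\pm\varepsilon_k\gamma_k$ contributions cancel within each block, so the total mass is $1$.

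\textbf{The sliced distance.} Fix $\theta$ and write $\theta_1=\langle e_1,\theta\rangle$.
\begin{itemize}
\item Both $\Pi^\theta_{\#}\mu_0$ and $\Pi^\theta_{\#}\mu_\varepsilon$ are supported on the points $0,\theta_1a_1,\theta_1b_1,\ldots,\theta_1a_L,\theta_1b_L$. Since $0<a_1<b_1<\cdots<b_L$, the order of these points is preserved when $\theta_1>0$ and reversed when $\theta_1<0$. If $\theta_1=0$, both projections are $\delta_0$.
\item Using the quantile representation of $\operatorname W_p^p$, the two CDFs coincide outside the open blocks between $\theta_1a_k$ and $\theta_1b_k$. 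Inside the $k$-th block they differ by exactly $\gamma_k$.
\item Consequently the two quantile functions agree except on a $u$-set of length $\gamma_k$ for each $k$. On that set one quantile equals $\theta_1a_k$ and the other equals $\theta_1b_k$.
\end{itemize}
This gives
\[
\operatorname W_p^p\bigl(\Pi^\theta_{\#}\mu_\varepsilon,\Pi^\theta_{\#}\mu_0\bigr)=|\theta_1|^p\sum_{k=1}^L\gamma_k(b_k-a_k)^p .
\]
Integrating over $\sigma$ yields the factor $\kappa_{p,d}$. An equivalent way to see the block computation is that the distance is $\int|F-G|$-type in each block with monotone rearrangement. The only care needed is to make sure the blocks do not interact, and the strict ordering of the $a_k,b_k$ guarantees this.

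\textbf{Total variation.} Use $\operatorname{TV}(\mathsf P_0,\mathsf P_1)\le\frac12\sqrt{\chi^2(\mathsf P_1\|\mathsf P_0)}$. Note that $\mu_\varepsilon\ll\mu_0$, since $\mu_0$ charges every support point. Expanding the square of the mixture gives
\[
1+\chi^2(\mathsf P_1\|\mathsf P_0)=\mathbb E_{\varepsilon,\varepsilon'}\Bigl(\int\frac{d\mu_\varepsilon\,d\mu_{\varepsilon'}}{d\mu_0}\Bigr)^{n},
\]
where $\varepsilon,\varepsilon'$ are independent and uniform on $\{-1,1\}^L$.

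The atom at $0$ contributes $1-\Omega$ to the inner integral. At $x_k$ the contribution is
\[
\frac{(\omega_k/2+\varepsilon_k\gamma_k)(\omega_k/2+\varepsilon'_k\gamma_k)}{\omega_k/2}=\frac{\omega_k}{2}+(\varepsilon_k+\varepsilon'_k)\gamma_k+\frac{2\varepsilon_k\varepsilon'_k\gamma_k^2}{\omega_k}.
\]
At $y_k$ the contribution is the same expression with the linear term negated. Summing over both atoms and all blocks, the inner integral equals $1+x$ with $x\coloneqq4\sum_k\eta_k\gamma_k^2/\omega_k$, where $\eta_k\coloneqq\varepsilon_k\varepsilon'_k$ are i.i.d. Rademacher variables.

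To bound the $n$-th power:
\begin{itemize}
\item $1+x$ is an integral of nonnegative functions, so $1+x\ge0$ and hence $(1+x)^n\le e^{nx}$.
\item The $\eta_k$ are independent, so $\mathbb E\,e^{nx}=\prod_k\cosh(4n\gamma_k^2/\omega_k)$.
\item Since $\cosh c\le e^{c^2/2}$, this product is at most $\exp\bigl(8n^2\sum_k\gamma_k^4/\omega_k^2\bigr)$.
\end{itemize}
Substituting into the $\chi^2$ bound gives the stated inequality. The main point to get right is the exact cancellation of the linear terms in the inner integral; after that, everything is a routine bound on the Rademacher moment generating function.
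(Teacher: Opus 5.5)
Your proposal is correct and follows essentially the same route as the paper. Both arguments check nonnegativity and total mass, match the quantiles within each block to obtain $|\langle e_1,\theta\rangle|^p\sum_k\gamma_k(b_k-a_k)^p$, and bound the second moment of the mixture likelihood ratio (your $\chi^2$) by $\prod_k\cosh(4n\gamma_k^2/\omega_k)\le\exp(8n^2\sum_k\gamma_k^4/\omega_k^2)$.

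Two of your steps make precise what the paper only asserts. You check that the inner sum $1+x$ is nonnegative (in fact $|x|\le\Omega\le1$) before using $(1+x)^n\le e^{nx}$, and you justify the blockwise quantile matching explicitly via the CDFs.
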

Combining Lemmas~\ref{lem:twopointmixtureversion} and
\ref{lem:rademacher-block-construction}, provided that $\mu_0,\mu_\varepsilon\in\mathcal K_{p,2p}(b)$ for every $\varepsilon\in\{-1,1\}^L$, we obtain
\begin{align}
\mathcal R_{n,m}^{\star}\bigl(\mathcal K_{p,2p}(b)\bigr)\ge\frac{\kappa_{p,d}}{4}\sum_{k=1}^L
\gamma_k(b_k-a_k)^p\left\{1-\frac12\left[\exp\left(8n^2\sum_{k=1}^L\frac{\gamma_k^4}{\omega_k^2}
\right)-1\right]^{1/2}\right\}.
\label{eq:critical-minimax-reduction}
\end{align}
Thus, the parameter choice is guided by two competing objectives:
we want to make
$\sum_{k=1}^L\gamma_k(b_k-a_k)^p$ as large as possible, while keeping
$n^2\sum_{k=1}^L\gamma_k^4/\omega_k^2$
bounded and ensuring that all the constructed measures satisfy the
projected moment constraint.

To make this choice more transparent, we first examine the projected
moment constraint. For every $\varepsilon\in\{-1,1\}^L$,
\begin{align*}
\int_{\mathbb S^{d-1}}M_{\mu_\varepsilon,2p}(\theta)^{1/2}\sigma(d\theta)=\kappa_{p,d}\left[\sum_{k=1}^L\left(\frac{\omega_k}{2}+\varepsilon_k\gamma_k\right)a_k^{2p}
+\sum_{k=1}^L\left(\frac{\omega_k}{2}-\varepsilon_k\gamma_k\right)b_k^{2p}\right]^{1/2}.
\end{align*}
Since $\gamma_k\le \omega_k/2$ and $a_k<b_k$, we obtain
\begin{equation*}
\int_{\mathbb S^{d-1}}M_{\mu_\varepsilon,2p}(\theta)^{1/2}\sigma(d\theta)\le\kappa_{p,d}\left(2\sum_{k=1}^L \omega_kb_k^{2p}\right)^{1/2}.
\end{equation*}
Similarly,
\begin{align*}
\int_{\mathbb S^{d-1}}M_{\mu_0,2p}(\theta)^{1/2}\sigma(d\theta)=
\kappa_{p,d}\left[\sum_{k=1}^L\frac{\omega_k}{2}\left(a_k^{2p}+b_k^{2p}\right)\right]^{1/2}\le\kappa_{p,d}
\left(\sum_{k=1}^L\omega_kb_k^{2p}\right)^{1/2}.
\end{align*}
Hence, a sufficient condition for
$\mu_0,\mu_\varepsilon\in\mathcal K_{p,2p}(b)$ for every
$\varepsilon\in\{-1,1\}^L$ is
\begin{equation*}
\sum_{k=1}^L\omega_kb_k^{2p}\le\frac{b^2}{2\kappa_{p,d}^2}.
\end{equation*}
We therefore distribute this upper-bound budget equally among the
$L$ blocks by imposing
\begin{equation}
\label{eq:critical-minimax-moment-allocation}
\omega_kb_k^{2p}=\frac{b^2}{2\kappa_{p,d}^2L},
\qquad k=1,\ldots,L.
\end{equation}

We next consider the total variation term. A simple way to keep this quantity bounded uniformly in $L$ is to
distribute the corresponding budget equally among the $L$ blocks.
We therefore choose
\begin{equation}
\label{eq:critical-minimax-perturbation}
\gamma_k=\frac{1}{4}\left(\frac{\omega_k}{n\sqrt L}\right)^{1/2},\qquad k=1,\ldots,L,
\end{equation}
for which
\begin{equation*}
8n^2\sum_{k=1}^L\frac{\gamma_k^4}{\omega_k^2}
=\frac{1}{32}.
\end{equation*}
Consequently,
\begin{equation*}
\operatorname{TV}(\mathsf P_0,\mathsf P_1)\le\frac{1}{2}\left(e^{1/32}-1\right)^{1/2}<\frac{1}{4}.
\end{equation*}

Equation~\eqref{eq:critical-minimax-moment-allocation} and the ordering
of $(b_k)_{k=1}^L$ imply that $(\gamma_k)_{k=1}^L$ must be decreasing.
Moreover, Eq.~\eqref{eq:critical-minimax-perturbation} shows that the
admissibility condition $\gamma_k\le \omega_k/2$ is ensured whenever $\omega_k\ge\frac{1}{4n\sqrt L}.$ This suggests choosing $(\omega_k)_{k=1}^L$ as a decreasing geometric sequence and then defining $(b_k)_{k=1}^L$ through
Eq.~\eqref{eq:critical-minimax-moment-allocation}.

On the other hand, Eq.~\eqref{eq:critical-minimax-reduction} and the
preceding total variation bound yield
\begin{align*}
\mathcal R_{n,m}^{\star}\bigl(\mathcal K_{p,2p}(b)\bigr)\ge\frac{3\kappa_{p,d}}{16}
\sum_{k=1}^L\gamma_k(b_k-a_k)^p =\frac{3b}{64\sqrt2\,\sqrt n\,L^{3/4}}
\sum_{k=1}^L\left(1-\frac{a_k}{b_k}\right)^p.
\end{align*}
We may therefore set $a_k=cb_k$ for $c\in(0,1)$, which gives
\begin{equation*}
\mathcal R_{n,m}^{\star}\left(\mathcal K_{p,2p}(b)\right)\ge\frac{3(1-c)^p}{64\sqrt2}b\frac{L^{1/4}}{\sqrt n}.
\end{equation*}
The geometric ratio of $(\omega_k)$ and the constant $c$ can be chosen so
that the ordering and probability constraints hold while allowing $L$ to be proportional to $\log (n+1)$. An explicit choice and the
complete verification are given in Appendix~\ref{appendix:proofoftheminimaxlowerboundr=2p}. For that
choice, the preceding bound yields
\begin{equation*}
\mathcal R_{n,m}^{\star}\left(\mathcal K_{p,2p}(b)\right)\gtrsim_p b\frac{\left[\log(n+1)\right]^{1/4}}{\sqrt n}.
\end{equation*}
It is unclear whether a different choice of parameters within the
present construction can yield a stronger minimax lower bound.
More generally, determining the exact minimax rate at the critical
moment order $r=2p$ remains an open problem.
\begin{appendix}
\section*{Appendices}\label{appn} %% if no title is needed, leave empty \section*{}.
% Appendices should be provided in \verb|{appendix}| environment,
% before Acknowledgements.

% If there is only one appendix,
% then please refer to it in text as \ldots\ in the \hyperref[appn]{Appendix}.
\end{appendix}
%%%%%%%%%%%%%%%%%%%%%%%%%%%%%%%%%%%%%%%%%%%%%%
%% Example with multiple Appendixes:        %%
%%%%%%%%%%%%%%%%%%%%%%%%%%%%%%%%%%%%%%%%%%%%%%
\begin{appendix}
\section{Proof of Lemma~\ref{lem:PropertiesK_pr}}\label{appendixA}
\begin{proof}
We first prove that
$ \mathcal K_{p,r}(b)\subseteq\mathcal P_p(\mathbb R^d).$
Define
$$\kappa_{p,d}\coloneqq \int_{\mathbb S^{d-1}} \left|\langle e_1,\theta\rangle\right|^pd\sigma(\theta),
$$
where $e_1=(1,0,\ldots,0)^\top\in\mathbb R^d$. By rotational invariance of $\sigma$ (see, for example,
\citet[Proposition~11.24]{peyre2025optimal}), for every
$x\in\mathbb R^d$,
\begin{equation}
\label{eq:spherical-moment-identity}
\int_{\mathbb S^{d-1}}\left|\langle x,\theta\rangle\right|^p
d\sigma(\theta)=\kappa_{p,d}\|x\|^p.
\end{equation}
Moreover, $\kappa_{p,d}>0$.

Let $\mu\in\mathcal K_{p,r}(b)$ and let $X\sim\mu$. Since the integrand
is nonnegative, Tonelli's theorem and
\eqref{eq:spherical-moment-identity} yield
\begin{equation*}
\kappa_{p,d}\,
\mathbb E_{X\sim\mu}\|X\|^p= \mathbb E_{X\sim\mu} \left[
\int_{\mathbb S^{d-1}} \left|\langle X,\theta\rangle\right|^p d\sigma(\theta)
\right]= \int_{\mathbb S^{d-1}}
\mathbb E_{X\sim\mu} \left|\langle X,\theta\rangle\right|^p d\sigma(\theta).
\end{equation*}
Since $r\ge p$, the monotonicity of moments gives, for every
$\theta\in\mathbb S^{d-1}$,
$$
\mathbb E_{X\sim\mu}
\left|\langle X,\theta\rangle\right|^p
\le \left( \mathbb E_{X\sim\mu}
\left|\langle X,\theta\rangle\right|^r
\right)^{p/r} = M_{\mu,r}(\theta)^{p/r}.
$$
Consequently,
$$ \kappa_{p,d} \mathbb E_{X\sim\mu}\|X\|^p \le \int_{\mathbb S^{d-1}} M_{\mu,r}(\theta)^{p/r} d\sigma(\theta)\le b.
$$
Since $\kappa_{p,d}>0$, it follows that
$$
\mathbb E_{X\sim\mu}\|X\|^p \le \frac{b}{\kappa_{p,d}} <+\infty.
$$
Thus,
$
\mu\in\mathcal P_p(\mathbb R^d).
$

We next prove the inclusion for distributions with bounded Euclidean
$r$-th moments. Let $\mu\in\mathcal P_r(\mathbb R^d)$ satisfy $\mathbb E_{X\sim\mu}\|X\|^r\le B.$
For every $\theta\in\mathbb S^{d-1}$, the Cauchy--Schwarz inequality
gives
$$
|\langle X,\theta\rangle| \le \|X\|\|\theta\| = \|X\|.
$$
Therefore,
$$ M_{\mu,r}(\theta) =
\mathbb E_{X\sim\mu} \left|\langle X,\theta\rangle\right|^r \le \mathbb E_{X\sim\mu}\|X\|^r \le B.
$$
Since $\sigma$ is a probability measure,
$$\int_{\mathbb S^{d-1}} M_{\mu,r}(\theta)^{p/r} d\sigma(\theta) \le B^{p/r}.
$$
Hence,
$$
\mu\in\mathcal K_{p,r}\left(B^{p/r}\right).
$$

Suppose now that $\operatorname{supp}(\mu)\subseteq\mathbb B(0,R).$
Then $\|X\|\le R$ almost surely, and hence
$$
\mathbb E_{X\sim\mu}\|X\|^r\le R^r.
$$
Applying the preceding inclusion with $B=R^r$, we obtain
$$
\mu \in \mathcal K_{p,r} \left( (R^r)^{p/r} \right) = \mathcal K_{p,r}(R^p).
$$

Finally, let $\mu,\nu\in\mathcal K_{p,r}(b)$. For every
$\theta\in\mathbb S^{d-1}$, the triangle inequality for
$\operatorname{W}_p$ gives
$$ \operatorname{W}_p \left(
\Pi^\theta_{\#}\mu, \Pi^\theta_{\#}\nu
\right)\le \operatorname{W}_p
\left(\Pi^\theta_{\#}\mu,\delta_0
\right) + \operatorname{W}_p
\left(\Pi^\theta_{\#}\nu,\delta_0 \right).
$$
Using $(a+c)^p\le2^{p-1}(a^p+c^p) $ for $a,c\ge0, p\ge 1$, we obtain
\begin{align*}
\operatorname{W}_p^p \left(
\Pi^\theta_{\#}\mu,\Pi^\theta_{\#}\nu\right)&\le 2^{p-1} \operatorname{W}_p^p
\left( \Pi^\theta_{\#}\mu,\delta_0
\right)+2^{p-1}
\operatorname{W}_p^p\left(\Pi^\theta_{\#}\nu,\delta_0\right)\\&=2^{p-1}
\mathbb E_{X\sim\mu}\left|\langle X,\theta\rangle\right|^p+2^{p-1}
\mathbb E_{Y\sim\nu}\left|\langle Y,\theta\rangle\right|^p\\&\le
2^{p-1} \left(
M_{\mu,r}(\theta)^{p/r}+M_{\nu,r}(\theta)^{p/r}\right),
\end{align*}
where the last inequality follows again from $r\ge p$. Integrating
over $\mathbb S^{d-1}$ yields
\begin{align*}
\operatorname{SW}_p^p(\mu,\nu)
&\le 2^{p-1} \int_{\mathbb S^{d-1}}
M_{\mu,r}(\theta)^{p/r} d\sigma(\theta)+
2^{p-1} \int_{\mathbb S^{d-1}}
M_{\nu,r}(\theta)^{p/r} d\sigma(\theta)\\
&\le 2^{p-1}b+2^{p-1}b=2^p b.
\end{align*}
This completes the proof.
\end{proof}
\section{Proofs of the Technical Results in Section~\ref{sec:technicallemmas}}
In this appendix, we provide the complete proofs of the technical results stated in Section~\ref{sec:technicallemmas}.
\subsection{Proof of Lemma~\ref{lem:projected-empirical-rate}}\label{appendix:proofofonesamplebounds}
\begin{proof}
Fix $\theta\in \mathbb{S}^{d-1}$ such that $M_{Q,s}(\theta)<\infty$. Assume first that $s\neq 2q$. By applying \citet[Theorem~1]{fournier2015rate} in dimension one with moment order $s$, we obtain $$\mathbb E\left[\operatorname{W}_q^q \left( \Pi^\theta_{\#}\widehat{Q}_n,\Pi^\theta_{\#}Q\right) \right]\le C_{q,s}\left(M_{Q,s}(\theta)\right)^{q/s} \left(n^{-1/2}+n^{-(s-q)/s}\right). $$ If $q<s<2q$, then $(s-q)/s<1/2$. Hence $n^{-1/2}\le n^{-(s-q)/s}.$ Therefore, $$ n^{-1/2}+n^{-(s-q)/s}\le2n^{-(s-q)/s}. $$ If $s>2q$, then $(s-q)/s>1/2$. Hence $n^{-(s-q)/s}\le n^{-1/2}.$ Therefore, $$ n^{-1/2}+n^{-(s-q)/s}\le 2n^{-1/2}. $$ Absorbing the factor $2$ into the constant $C_{q,s}$ gives the desired bound in both the first and the last regimes.

It remains to consider the critical case $s=2q$. This case is not stated explicitly in \citet[Theorem~1]{fournier2015rate}, but it follows from the argument in Step~2 in the proof of their Theorem~1. Since the ambient dimension is one and $q\ge1$, the condition $q>1/2$ required in that argument is automatically satisfied. We therefore obtain
\begin{equation*}
\mathbb E\left[\operatorname{W}_q^q \left( \Pi^\theta_{\#}\widehat{Q}_n,\Pi^\theta_{\#}Q\right) \right] \lesssim_{q}M_{Q,s}\left(\theta\right)^{q/s} \sum_{\ell \ge 0}2^{q\ell}\min \left\{2^{-s\ell},\left(\frac{2^{-s\ell}}{n}\right)^{1/2}\right\}.
\end{equation*}
Since $s=2q$, we have $2^{q\ell}2^{-s\ell}=2^{-q\ell}$, whereas
$2^{q\ell}\sqrt{2^{-s\ell}n^{-1}}=n^{-1/2}$. Thus
\begin{equation*}
\mathbb E\left[\operatorname{W}_q^q \left( \Pi^\theta_{\#}\widehat{Q}_n,\Pi^\theta_{\#}Q\right) \right]\lesssim_{q}M_{Q,s}(\theta)^{q/s}S_n,
\end{equation*}
where $S_n\coloneqq \sum_{\ell\ge 0} \min \left\{2^{-q\ell},n^{-1/2}\right\}$. For $n=1$, we have
\begin{equation*}
S_1=\sum_{\ell \ge 0}2^{-q\ell}=\frac{1}{1-2^{-q}},
\end{equation*}
which is bounded by a constant multiple of $\log 2$. Assume now that $n \ge 2$ and define
$$
L_n\coloneqq \left\lceil \frac{\log_2 n}{2q} \right\rceil.
$$
Since $n\ge2$, we have $\log_2 n>0$, and hence $L_n\ge1$. Moreover, by the definition of the ceiling function,
$$L_n\ge\frac{\log_2 n}{2q}.$$
Therefore,
$$ 2^{-qL_n} \le 2^{-q\frac{\log_2 n}{2q}}
= 2^{-\frac12\log_2 n} =n^{-1/2}.
$$
We split
$$ S_n = \sum_{\ell=0}^{L_n-1} \min\left\{
2^{-q\ell}, n^{-1/2} \right\} + \sum_{\ell=L_n}^{\infty} \min\left\{ 2^{-q\ell}, n^{-1/2} \right\}.
$$
We have
$$ \sum_{\ell=0}^{L_n-1}\min\left\{ 2^{-q\ell},
n^{-1/2} \right\} \le L_n n^{-1/2} \quad 
\sum_{\ell=L_n}^{\infty}
\min\left\{ 2^{-q\ell}, n^{-1/2} \right\}
\le \sum_{\ell=L_n}^{\infty}2^{-q\ell} =
\frac{2^{-qL_n}}{1-2^{-q}} \le \frac{1}{1-2^{-q}}n^{-1/2}.
$$
Combining the two estimates, we obtain
$$
S_n \le L_nn^{-1/2} + \frac{1}{1-2^{-q}}n^{-1/2} = \left( L_n+\frac{1}{1-2^{-q}}
\right)n^{-1/2}.
$$
Moreover, we also have
$$ L_n \le \frac{\log_2 n}{2q}+1 = \frac{\log n}{2q\log 2}+1 \le \frac{\log (n+1)}{2q\log 2}+1.
$$
As a consequence,
$$
S_n\le\left(\frac{\log(n+1)}{2q\log 2}+1+\frac{1}{1-2^{-q}}
\right)n^{-1/2}.
$$
Since $\log(n+1)\ge \log 2$ for every $n\ge1$, the constant terms can be absorbed into a multiple of $\log(n+1)$. Hence,
$$S_n\lesssim_{q}n^{-1/2}\log(n+1).
$$
Combining the preceding estimates yields
\begin{equation*}
\mathbb{E}\left[\operatorname{W}^q_q\left(\Pi^{\theta}_{\#}\widehat{Q}_n,\Pi^{\theta}_{\#}Q\right)\right]\lesssim_{q}M_{Q,s}\left(\theta\right)^{q/s}n^{-1/2}\log(n+1)
\end{equation*}
when $s=2q$. This proves the desired estimate in all three regimes.
\end{proof}

\subsection{Proof of Lemma~\ref{lem:decomposition}}\label{appendix:proofoflemdecomposition}
To prove Lemma~\ref{lem:decomposition}, we first need the following elementary result.
\begin{lemma}\label{lem:sec5basicinequality}
Let $p> 1$. Then, for all $a,b\ge 0$,
$$|a^p-b^p|\le p2^{p-1}
\left(b^{p-1}|a-b|+|a-b|^p\right).
$$
Consequently, for all $x,y\in\mathbb{R}$,
$$
\left||x+y|^p-|x|^p\right|\le
p2^{p-1}\left(
|x|^{p-1}|y|+|y|^p\right).
$$
\end{lemma}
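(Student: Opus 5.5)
The first inequality is elementary, and the second follows from it by choosing $a$ and $b$ appropriately. For the first inequality, assume without loss of generality that $a\neq b$ and set $h\coloneqq|a-b|$. By the mean value theorem applied to $t\mapsto t^p$ on the interval between $a$ and $b$, we have
\[
|a^p-b^p|=p\,\xi^{p-1}h
\]
for some $\xi$ lying between $a$ and $b$. In particular $\xi\le \max\{a,b\}\le b+h$.

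Since $p>1$, the map $t\mapsto t^{p-1}$ is nondecreasing on $[0,\infty)$. Hence
\[
\xi^{p-1}\le (b+h)^{p-1}\le 2^{p-1}\max\{b,h\}^{p-1}\le 2^{p-1}\bigl(b^{p-1}+h^{p-1}\bigr),
\]
and multiplying by $ph$ gives
\[
|a^p-b^p|\le p2^{p-1}\bigl(b^{p-1}h+h^{p}\bigr).
\]
The point of using $\max\{b,h\}$ here is that it avoids any case distinction between $p\le 2$ (where $t^{p-1}$ is concave) and $p>2$ (where it is convex). The constant $2^{p-1}$ is generous but sufficient for our purposes.

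For the second inequality, apply the first with $a=|x+y|$ and $b=|x|$. The reverse triangle inequality gives $|a-b|=\bigl||x+y|-|x|\bigr|\le |y|$. The right-hand side $b^{p-1}|a-b|+|a-b|^p$ is nondecreasing in $|a-b|$, so we may replace $|a-b|$ by $|y|$ in it. This yields
\[
\bigl||x+y|^p-|x|^p\bigr|\le p2^{p-1}\bigl(|x|^{p-1}|y|+|y|^p\bigr).
\]

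Neither step presents a real obstacle. The only point requiring care is the monotonicity of the bound in $h$, which is what justifies substituting $|y|$ for $|a-b|$ in the final step.
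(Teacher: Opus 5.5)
Your proof is correct and follows essentially the same route as the paper: the mean value theorem, the bound $\max\{a,b\}\le b+|a-b|$, the elementary inequality $(u+v)^{p-1}\le 2^{p-1}(u^{p-1}+v^{p-1})$, and then the choice $a=|x+y|$, $b=|x|$ together with the reverse triangle inequality. Your remark that the right-hand side is nondecreasing in $|a-b|$ makes explicit a substitution step that the paper leaves implicit.
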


\begin{proof}
We first prove the inequality for $a,b\ge0$. Consider the function
$f(t)=t^p$ on $[0,\infty)$. By the mean value theorem, there exists a point
$\xi$ between $a$ and $b$ such that
$$
a^p-b^p=f'( \xi )(a-b)=p\xi^{p-1}(a-b).
$$
Hence
$$|a^p-b^p|\le p|a-b|\max(a,b)^{p-1}.
$$
Moreover,
$$\max(a,b)\le b+|a-b|.
$$
Therefore,
$$|a^p-b^p| \le p|a-b| \left(b+|a-b|\right)^{p-1}.
$$
Using the elementary inequality
$$
(u+v)^{p-1} \le2^{p-1}\left(u^{p-1}+v^{p-1}\right),\qquad u,v\ge0,
$$
we obtain
$$ |a^p-b^p| \le p2^{p-1}|a-b| \left( b^{p-1}+|a-b|^{p-1} \right).
$$
Hence
$$
|a^p-b^p| \le p2^{p-1} \left( b^{p-1}|a-b|+|a-b|^p \right).
$$

Now take
$$ a=|x+y|, \qquad b=|x|.
$$
By the reverse triangle inequality,
$$|a-b|=\left||x+y|-|x|\right|\le|y|.
$$
Substituting this into the previous estimate gives
$$ \left||x+y|^p-|x|^p\right|\le p2^{p-1}\left(|x|^{p-1}|y|+|y|^p\right).
$$
This proves the claim.
\end{proof}
% Now we give the proof of Lemma~\ref{lem:decomposition}
\begin{proof}[Proof of Lemma~\ref{lem:decomposition}]
% Throughout the proof, $C_p>0$ denotes a constant depending only on $p$, whose value may change from line to line.

Since $\theta \in \mathcal{G}$ and $r \ge p$, the projected population
measures have finite $p$-th moments. The projected empirical measures are finitely supported and hence also have finite $p$-th moments. Therefore, by the one-dimensional quantile representation of the Wasserstein distance (see, for example, \citet[Theorem~2.10]{bobkov2019one}), we have
$$
\operatorname{W}_p^p
\left( \Pi^\theta_{\#}\widehat{\mu}_n, \Pi^\theta_{\#}\widehat{\nu}_m \right)
=\int_0^1\left|F_{n,\theta}^{-1}(u)-G_{m,\theta}^{-1}(u)\right|^pdu
$$
and
$$
\operatorname{W}_p^p \left( \Pi^\theta_{\#}\mu, \Pi^\theta_{\#}\nu
\right)=\int_0^1\left|F_\theta^{-1}(u)-G_\theta^{-1}(u)\right|^pdu.
$$
Therefore,
$$
\left|\operatorname{W}_p^p\left(\Pi^\theta_{\#}\widehat{\mu}_n,
\Pi^\theta_{\#}\widehat{\nu}_m \right)-\operatorname{W}_p^p\left(\Pi^\theta_{\#}\mu,\Pi^\theta_{\#}\nu\right)\right|\le
\int_0^1\left|\left|
F_{n,\theta}^{-1}(u)-G_{m,\theta}^{-1}(u)
\right|^p-\left|F_\theta^{-1}(u)-G_\theta^{-1}(u)\right|^p
\right|du.
$$

For each $u\in(0,1)$, set
$$
x(u)=F_\theta^{-1}(u)-G_\theta^{-1}(u)
$$
and
$$
y(u)=\left(F_{n,\theta}^{-1}(u)-F_\theta^{-1}(u)\right)
-\left(G_{m,\theta}^{-1}(u)-G_\theta^{-1}(u)
\right).
$$
Then
$$
F_{n,\theta}^{-1}(u)-G_{m,\theta}^{-1}(u)=
x(u)+y(u).
$$
Using Lemma~\ref{lem:sec5basicinequality}, we obtain
$$
\begin{aligned}
&\left|
\left|
F_{n,\theta}^{-1}(u)-G_{m,\theta}^{-1}(u)
\right|^p-\left|F_\theta^{-1}(u)-G_\theta^{-1}(u)\right|^p\right|
\\
&\le
C_p\left|F_\theta^{-1}(u)-G_\theta^{-1}(u)
\right|^{p-1}\left|\left(F_{n,\theta}^{-1}(u)-F_\theta^{-1}(u)\right)
-\left(G_{m,\theta}^{-1}(u)-G_\theta^{-1}(u)\right)\right|
\\
&\quad
+C_p\left|\left(F_{n,\theta}^{-1}(u)-F_\theta^{-1}(u)
\right)-\left(G_{m,\theta}^{-1}(u)-G_\theta^{-1}(u)\right)\right|^p.
\end{aligned}
$$

Next, we use
$$
\left|F_\theta^{-1}(u)-G_\theta^{-1}(u)
\right|^{p-1}\le C_p\left(|F_\theta^{-1}(u)|^{p-1}+|G_\theta^{-1}(u)|^{p-1}\right).$$
Thus,
$$
\begin{aligned}
&\left|\left|F_{n,\theta}^{-1}(u)-G_{m,\theta}^{-1}(u)\right|^p-\left|
F_\theta^{-1}(u)-G_\theta^{-1}(u)\right|^p\right|
\\
&\le C_p \left(
|F_\theta^{-1}(u)|^{p-1}+|G_\theta^{-1}(u)|^{p-1}\right)\left|F_{n,\theta}^{-1}(u)-F_\theta^{-1}(u)\right|
\\
&\quad
+C_p\left(|F_\theta^{-1}(u)|^{p-1}+
|G_\theta^{-1}(u)|^{p-1}\right)
\left|G_{m,\theta}^{-1}(u)-G_\theta^{-1}(u)
\right|
\\
&\quad
+C_p\left|F_{n,\theta}^{-1}(u)-F_\theta^{-1}(u)\right|^p
+C_p\left|G_{m,\theta}^{-1}(u)-G_\theta^{-1}(u)\right|^p.
\end{aligned}
$$
Integrating over $u\in(0,1)$ and taking expectation gives
$$
\mathbb{E}_{\mu,\nu}
\left|
\operatorname{W}_p^p\left(\Pi^\theta_{\#}\widehat{\mu}_n,\Pi^\theta_{\#}\widehat{\nu}_m
\right)-\operatorname{W}_p^p\left(\Pi^\theta_{\#}\mu,
\Pi^\theta_{\#}\nu\right)\right|
\lesssim_{p}\left(\mathsf{A}_{n,\theta}+\mathsf{B}_{m,\theta}+\mathsf{C}_{n,\theta}+\mathsf{D}_{m,\theta}
\right).
$$
This concludes the proof.
\end{proof}
\subsection{Proof of Proposition~\ref{prop:peelingtool}}\label{appendix:proofofprop:peelingtool}
We first establish two auxiliary lemmas that will be used in the proof of
Proposition~\ref{prop:peelingtool}.

It is well known that, for one-dimensional probability measures with finite first moments,
\begin{equation*}
\mathbb{E}\int_0^1
\left|F_{n,\theta}^{-1}(u)-F_\theta^{-1}(u)\right|du=\mathbb{E}\int_{\mathbb R}
\left|F_{n,\theta}(x)-F_\theta(x)\right|dx;
\end{equation*}
see, for instance, \citet[Theorem~2.10]{bobkov2019one}. For our purposes, however, we need localized versions of this identity near the two endpoints of $(0,1)$. The following lemma provides the required representations.
\begin{lemma}\label{lem:Quantile-CDF}
Fix $\theta\in \mathcal{G}$, where $\mathcal{G}$ is defined in Eq.~\eqref{eq:gooddirections}. Denote by $F_{\theta}$ the distribution
function of $\Pi^\theta_{\#}\mu$, and by $F_{n,\theta}$ the empirical
distribution function of $\Pi^\theta_{\#}\widehat{\mu}_n$, where
$\widehat{\mu}_n$ is the empirical measure associated with $n$ i.i.d. samples
from $\mu$. Then, for every $\alpha,\beta\in[0,1]$,
\begin{equation*}
\int_0^\alpha\left|F_{n,\theta}^{-1}(u)-F_{\theta}^{-1}(u)\right|du=
\int_{\mathbb{R}}
\left|\min\{F_{n,\theta}(x),\alpha\}
-\min\{F_{\theta}(x),\alpha\}
\right|dx,
\end{equation*}
and
\begin{equation*}
\int_\beta^1\left|F_{n,\theta}^{-1}(u)-F_{\theta}^{-1}(u)\right|du=
\int_{\mathbb{R}}\left|
\max\{F_{n,\theta}(x),\beta\}-
\max\{F_{\theta}(x),\beta\}\right|dx.
\end{equation*}
\end{lemma}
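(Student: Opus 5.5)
We prove the first identity via the layer-cake (Fubini) representation of $|a-b|$ as the Lebesgue measure of the region between two values. The second identity follows from the first by the reflection $x\mapsto -x$, $u\mapsto 1-u$, or by repeating the argument verbatim.

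\emph{Step 1: a two-dimensional region.} For fixed $u\in(0,1)$ and reals $a,b$, write
\[
|a-b|=\int_{\mathbb R}\bigl|\mathbbm 1_{\{x\ge a\}}-\mathbbm 1_{\{x\ge b\}}\bigr|\,dx .
\]
Apply this with $a=F_{n,\theta}^{-1}(u)$ and $b=F_\theta^{-1}(u)$, and use the Galois relation $F^{-1}(u)\le x\iff u\le F(x)$, valid for the generalized inverse of any distribution function. This gives
\[
\bigl|\mathbbm 1_{\{x\ge F_{n,\theta}^{-1}(u)\}}-\mathbbm 1_{\{x\ge F_\theta^{-1}(u)\}}\bigr|=\bigl|\mathbbm 1_{\{u\le F_{n,\theta}(x)\}}-\mathbbm 1_{\{u\le F_\theta(x)\}}\bigr|.
\]

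\emph{Step 2: Tonelli.} The integrand is nonnegative and jointly measurable in $(u,x)$. Integrating over $u\in(0,\alpha)$ and exchanging the order of integration, the inner integral for fixed $x$ equals the Lebesgue measure of the symmetric difference of $(0,\min\{F_{n,\theta}(x),\alpha\}]$ and $(0,\min\{F_\theta(x),\alpha\}]$. Since both are intervals starting at $0$, this measure is
\[
\bigl|\min\{F_{n,\theta}(x),\alpha\}-\min\{F_\theta(x),\alpha\}\bigr|.
\]
This is exactly the first identity. Both sides are allowed to be $+\infty$ a priori, but they are finite here: $\theta\in\mathcal G$ gives $\Pi^\theta_{\#}\mu$ a finite first moment, and the empirical measure has finite support.

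\emph{Step 3: the upper tail.} The same argument applies with $u\in(\beta,1)$. For fixed $x$, the set $\{u\in(\beta,1):u\le F(x)\}$ is $(\beta,\max\{F(x),\beta\}]$, whose length is $\max\{F(x),\beta\}-\beta$. The symmetric difference for the two distribution functions then has measure
\[
\bigl|\max\{F_{n,\theta}(x),\beta\}-\max\{F_\theta(x),\beta\}\bigr|.
\]

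The main obstacle is bookkeeping rather than analysis. One must check the Galois equivalence carefully at atoms, using right-continuity of $F$ and the infimum definition of $F^{-1}$, and handle the endpoints ($u$ near $0$ or $1$, and $\alpha\in\{0,1\}$). Null-set discrepancies from the choice between $\le$ and $<$ do not affect the integrals.
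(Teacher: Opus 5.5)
Your proposal is correct and follows essentially the same route as the paper: write $|a-b|$ as an integral of the difference of indicators, convert them via the equivalence $F^{-1}(u)\le x \iff u\le F(x)$, exchange the order of integration by Tonelli, and compute the inner $u$-integral as $|\min\{F_{n,\theta}(x),\alpha\}-\min\{F_\theta(x),\alpha\}|$ (respectively with $\max$ and $\beta$). Your symmetric-difference description of the inner integral is just a slightly more explicit version of the paper's ``direct calculation.''
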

\begin{proof}
Recall that, for every $u \in (0,1)$, and $x \in \mathbb{R}$,
\begin{equation*}
F^{-1}_{\theta}(u) \le x \Leftrightarrow u \le F_{\theta}(x),
\end{equation*}
and 
\begin{equation*}
F^{-1}_{n,\theta}(u) \le x \Leftrightarrow u \le F_{n,\theta}(x).
\end{equation*}
A proof of these equivalences can be found in
\citet[Lemma~A.3]{bobkov2019one}. Moreover, for every $a,b \in \mathbb{R}$, we have
\begin{equation*}
\left|a-b\right| = \int_{\mathbb{R}}\left|\mathbbm{1}_{\{a \le x\}}-\mathbbm{1}_{\{b \le x\}}\right|dx.
\end{equation*}
Therefore,
\begin{equation*}
\left|F^{-1}_{n,\theta}(u)-F^{-1}_{\theta}(u)\right|= \int_{\mathbb{R}}\left|\mathbbm{1}_{\{u \le F_{n,\theta}(x)\}}-\mathbbm{1}_{\{u \le F_{\theta}(x)\}}\right|dx
\end{equation*}
We treat the two localized identities separately.

\textbf{Lower tail.} Fix $\alpha \in [0,1]$. Since the integrand is nonnegative, Tonelli's theorem gives
\begin{equation*}
\int_{0}^{\alpha}\left|F^{-1}_{n,\theta}(u)-F^{-1}_{\theta}(u)\right| du= \int_{\mathbb{R}}\int_0^{\alpha}\left|\mathbbm{1}_{\{u\le F_{n,\theta}(x)\}}-\mathbbm{1}_{\{u \le F_{\theta}(x)\}}\right|dudx.
\end{equation*}
Besides, a direct calculation gives, for every $x\in \mathbb{R}$
\begin{equation*}
\int_0^{\alpha} \left|\mathbbm{1}_{\{u\le F_{n,\theta}(x)\}}-\mathbbm{1}_{\{u \le F_{\theta}(x)\}}\right|du=\left|\min \{F_{n,\theta}(x),\alpha\}-\min \{F_{\theta}(x),\alpha\}\right|.
\end{equation*}
Consequently,
\begin{equation*}
\int_0^\alpha\left|F_{n,\theta}^{-1}(u)-F_{\theta}^{-1}(u)\right|du=
\int_{\mathbb{R}}
\left|\min\{F_{n,\theta}(x),\alpha\}
-\min\{F_{\theta}(x),\alpha\}
\right|dx.
\end{equation*}

\textbf{Upper tail.} Fix $\beta \in [0,1]$. Again, by Tonelli's theorem,
\begin{equation*}
\int_{\beta }^{1}\left|F^{-1}_{n,\theta}(u)-F^{-1}_{\theta}(u)\right|du = \int_{\mathbb{R}}\int_\beta^{1}\left|\mathbbm{1}_{\{u\le F_{n,\theta}(x)\}}-\mathbbm{1}_{\{u \le F_{\theta}(x)\}}\right|dudx.
\end{equation*}
Similarly, a direct calculation gives, for every $x\in\mathbb{R}$,
\begin{equation*}
\int_{\beta}^1 \left|\mathbbm{1}_{\{u \le F_{n,\theta}(x)\}}-\mathbbm{1}_{\{u \le F_{\theta}(x)\}}\right|du= \left|\max \{F_{n,\theta}(x),\beta\}-\max \{F_{\theta}(x),\beta\}\right|.
\end{equation*}
As a result,
\begin{equation*}
\int_\beta^1\left|F_{n,\theta}^{-1}(u)-F_{\theta}^{-1}(u)\right|du=
\int_{\mathbb{R}}\left|
\max\{F_{n,\theta}(x),\beta\}-
\max\{F_{\theta}(x),\beta\}\right|dx.
\end{equation*}
\end{proof}
We also need a simple estimate on the Lebesgue measure of CDF-level intervals.
\begin{lemma}[Length bound for CDF-level intervals]\label{lem:cdf-level-interval}
Let $\mu\in\mathcal K_{p,r}(b)$, where $\mathcal K_{p,r}(b)$ is defined in Eq.~\eqref{sec1:eqclass}. Fix $\theta\in \mathcal{G}$, where $\mathcal{G}$ is defined in Eq.~\eqref{eq:gooddirections}, and let $F_\theta$ denote the distribution function of $\Pi^\theta_{\#}\mu$. Then, for any $0<a<c<1$,
$$ \left| \left\{ x\in\mathbb R:
a<F_\theta(x)\le c\right\}
\right| \le 2M_{\mu,r}(\theta)^{1/r}
\max\left\{ a^{-1/r}, (1-c)^{-1/r} \right\},
$$
where $M_{\mu,r}(\theta) \coloneqq \mathbb E_{X\sim\mu}|\langle X,\theta\rangle|^r.
$
\end{lemma}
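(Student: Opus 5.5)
The plan is to split the level set at the median and bound each half by Markov's inequality applied to the projected variable $Z\coloneqq\langle X,\theta\rangle$, $X\sim\mu$. Since $\theta\in\mathcal G$, we have $M_{\mu,r}(\theta)=\mathbb E|Z|^r<\infty$, and $F_\theta$ is the distribution function of $Z$. Because $F_\theta$ is nondecreasing, the set $I\coloneqq\{x: a<F_\theta(x)\le c\}$ is an interval. It therefore suffices to locate its endpoints: we show that $I\subseteq[-t_1,t_2]$ with $t_1\le M_{\mu,r}(\theta)^{1/r}a^{-1/r}$ and $t_2\le M_{\mu,r}(\theta)^{1/r}(1-c)^{-1/r}$. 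Then $|I|\le t_1+t_2\le 2M_{\mu,r}(\theta)^{1/r}\max\{a^{-1/r},(1-c)^{-1/r}\}$.

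\textbf{Left endpoint.} Take $x\in I$ with $x<0$. Then $a<F_\theta(x)=\mathbb P(Z\le x)\le\mathbb P(|Z|\ge |x|)$. Markov's inequality gives $\mathbb P(|Z|\ge|x|)\le M_{\mu,r}(\theta)|x|^{-r}$. Hence $|x|<M_{\mu,r}(\theta)^{1/r}a^{-1/r}$.

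\textbf{Right endpoint.} Take $x\in I$ with $x>0$. Then $1-c\le 1-F_\theta(x)=\mathbb P(Z>x)\le M_{\mu,r}(\theta)x^{-r}$. Hence $x\le M_{\mu,r}(\theta)^{1/r}(1-c)^{-1/r}$.

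Combining the two bounds, $I\subseteq\bigl(-M_{\mu,r}(\theta)^{1/r}a^{-1/r},\,M_{\mu,r}(\theta)^{1/r}(1-c)^{-1/r}\bigr]$, and the length bound follows.

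Degenerate cases need no separate treatment. If $M_{\mu,r}(\theta)=0$, then $Z=0$ almost surely, so $F_\theta$ takes only the values $0$ and $1$; since $0<a<c<1$, the set $I$ is empty. The argument above also covers this case directly, since it forces $I\subseteq\{0\}$, which has measure zero.

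There is no real obstacle here. The only care needed is with strict versus non-strict inequalities in the Markov step, and these do not affect Lebesgue measure.
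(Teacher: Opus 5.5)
Your proof is correct and is essentially the paper's argument. The paper also applies Markov's inequality to each tail to show $\{a<F_\theta\le c\}\subseteq(-R_a,R_c]$ with $R_a=(M_{\mu,r}(\theta)/a)^{1/r}$ and $R_c=(M_{\mu,r}(\theta)/(1-c))^{1/r}$, then bounds $R_a+R_c$ by twice the maximum. The paper treats $M_{\mu,r}(\theta)=0$ as a separate case, whereas you note that the same Markov step already covers it. One small wording point: your opening sentence says you split at the median, but you actually split at the origin.
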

\begin{proof}
By the definition of $\mathcal{G}$, we have $M_{\mu,r}(\theta)<+\infty$. 

We first consider the degenerate case $M_{\mu,r}(\theta)=0$. In this case,
$$
|\langle X,\theta\rangle|=0
\quad\text{almost surely}.
$$
Hence
$$
F_\theta(x) =
\begin{cases}
0, & x<0,\\1, & x\ge0.
\end{cases}
$$
Since $0<a<c<1$, we have
$$\left\{x\in\mathbb R:a<F_\theta(x)\le c\right\} = \emptyset.
$$
Therefore,
$$
\left| \left\{ x\in\mathbb R: a<F_\theta(x)\le c \right\} \right| =0.
$$
The desired inequality follows immediately in this case.

We now assume that $0<M_{\mu,r}(\theta)<+\infty$. Define
$$
R_a\coloneqq\left(\frac{M_{\mu,r}(\theta)}{a}\right)^{1/r},\qquad
R_c\coloneqq
\left(\frac{M_{\mu,r}(\theta)}{1-c}\right)^{1/r}.
$$
We claim that
$$\left\{x\in\mathbb R:
a<F_\theta(x)\le c
\right\}\subseteq(-R_a,R_c].
$$

Indeed, for every $x\le -R_a<0$, Markov's inequality gives
$$
F_\theta(x) =
\mathbb P_{X\sim\mu}\left(\langle X,\theta\rangle\le x\right) \le
\mathbb P_{X\sim\mu}\left(|\langle X,\theta\rangle|\ge |x|\right) \le
\frac{M_{\mu,r}(\theta)}{|x|^r}
\le \frac{M_{\mu,r}(\theta)}{R_a^r}
= a.
$$
Thus, if $F_\theta(x)>a$, then necessarily $x>-R_a$.

Similarly, for every $x>R_c>0$, Markov's inequality gives
$$
1-F_\theta(x) = \mathbb P_{X\sim\mu}\left(\langle X,\theta\rangle>x\right) \le \mathbb P_{X\sim\mu}\left(|\langle X,\theta\rangle|\ge x\right)
\le
\frac{M_{\mu,r}(\theta)}{x^r}
< \frac{M_{\mu,r}(\theta)}{R_c^r} = 1-c.
$$
Hence $F_\theta(x)>c$ for every $x>R_c$. Therefore, if $F_\theta(x)\le c$, then necessarily $x\le R_c$.

Combining the two observations, we obtain
$$
\left\{x\in\mathbb R:
a<F_\theta(x)\le c\right\}
\subset(-R_a,R_c].
$$
Consequently,
$$\left|\left\{
x\in\mathbb R:a<F_\theta(x)\le c
\right\}\right|
\le|(-R_a,R_c]|=R_a+R_c.
$$
Finally,
$$R_a+R_c
=M_{\mu,r}(\theta)^{1/r}
\left(a^{-1/r}+(1-c)^{-1/r}
\right)\le
2M_{\mu,r}(\theta)^{1/r}\max\left\{
a^{-1/r},(1-c)^{-1/r}
\right\}.
$$
This proves the lemma.
\end{proof}
We are now ready to prove Proposition~\ref{prop:peelingtool}.
\begin{proof}[Proof of Proposition~\ref{prop:peelingtool}]
The case $\alpha=0$ is trivial. Hence, throughout the proof, we assume that $0<\alpha\le 1$.

We first consider the degenerate case $M_{\mu,r}(\theta)=0$. In this case,
$$
|\langle X,\theta\rangle|=0
\quad\text{almost surely}.
$$
Hence, almost surely
$$
F_\theta(x)=F_{n,\theta}(x) =
\begin{cases}
0, & x<0,\\
1, & x\ge 0.
\end{cases}
$$
Consequently, for every $u\in(0,1)$,
$$
F_{n,\theta}^{-1}(u)=F_\theta^{-1}(u)=0.
$$
Since the right-hand side of the desired estimate is also equal to zero when $M_{\mu,r}(\theta)=0$, the result follows in this case.

We now assume that $M_{\mu,r}(\theta)\in(0,+\infty)$. By Lemma~\ref{lem:Quantile-CDF}, we have
$$ \int_0^\alpha \left| F_{n,\theta}^{-1}(u)-F_\theta^{-1}(u)\right|du=\int_{\mathbb R}\left|
\min\{F_{n,\theta}(x),\alpha\}-\min\{F_\theta(x),\alpha\}
\right|dx.
$$
We split the proof into two regimes, according to whether $\alpha<1/4$ or $\alpha\ge1/4$.

Assume first that $0<\alpha<1/4$. We decompose $\mathbb R$ into the following three regions.

\textbf{Case 1.} Let
$$
J_1=\left\{x\in\mathbb R:F_\theta(x)\le 2\alpha\right\}.
$$
For every $x\in J_1$, we have
$$
\left|
\min\{F_{n,\theta}(x),\alpha\}-\min\{F_\theta(x),\alpha\}
\right|\le\left|F_{n,\theta}(x)-F_\theta(x)\right|.
$$
Therefore,
\begin{align*}
\mathbb E\left|\min\{F_{n,\theta}(x),\alpha\}
-\min\{F_\theta(x),\alpha\}\right|
&\le\mathbb E\left|F_{n,\theta}(x)-F_\theta(x)\right| \\
&\le\sqrt{\frac{F_\theta(x)(1-F_\theta(x))}{n}} \\
&\le\sqrt{\frac{F_\theta(x)}{n}}.
\end{align*}

For $j\ge0$, define
$$
D_j
\coloneqq\left\{x\in\mathbb R:2^{-j}\alpha<F_\theta(x)\le 2^{-j+1}\alpha\right\}.
$$
The sets $(D_j)_{j\ge0}$ cover the region $\{x\in\mathbb R:0<F_\theta(x)\le 2\alpha\}$, while the set where $F_\theta(x)=0$ does not contribute to the integral. Hence,
\allowdisplaybreaks
\begin{align*}
\int_{J_1}\mathbb E\left|\min\{F_{n,\theta}(x),\alpha\}
-\min\{F_\theta(x),\alpha\}\right|dx 
&\le\int_{J_1}\sqrt{\frac{F_\theta(x)}{n}}dx\\
&\le\sum_{j\ge0}\int_{D_j}\sqrt{\frac{F_\theta(x)}{n}}
dx\\
&\le\sum_{j\ge0}\sqrt{\frac{2^{-j+1}\alpha}{n}}|D_j| \\
&\le\sum_{j\ge0}\sqrt{\frac{2^{-j+1}\alpha}{n}}2M_{\mu,r}(\theta)^{1/r}\left(\min\{2^{-j}\alpha,1-2^{-j+1}\alpha\}
\right)^{-1/r} \\
&\le\sum_{j\ge0}\sqrt{\frac{2^{-j+1}\alpha}{n}}2M_{\mu,r}(\theta)^{1/r}(2^{-j}\alpha)^{-1/r} \\
&\lesssim_{r}\frac{M_{\mu,r}(\theta)^{1/r}}{\sqrt n}
\alpha^{1/2-1/r}\sum_{j\ge0}2^{-j(1/2-1/r)} \\
&\lesssim_{r}\frac{M_{\mu,r}(\theta)^{1/r}}{\sqrt n}\alpha^{1/2-1/r}.
\end{align*}
Here we used Lemma~\ref{lem:cdf-level-interval}. Moreover, since $\alpha<1/4$, we have
$$
2^{-j}\alpha<\frac14
\qquad\text{and}\qquad
1-2^{-j+1}\alpha>\frac12,
$$
so that
$$
\min\{2^{-j}\alpha,1-2^{-j+1}\alpha\}=2^{-j}\alpha.
$$
Besides, the last inequality follows from the summability of
$\sum_{j\ge0}2^{-j(1/2-1/r)}$, since $r>2$.

\textbf{Case 2.} Let
$$
J_2=\left\{x\in\mathbb R:2\alpha<F_\theta(x)\le\frac34\right\}.
$$
For $x\in J_2$, we have $F_\theta(x)>\alpha$, and hence
$$
\min\{F_\theta(x),\alpha\}=\alpha.
$$
Therefore,
\begin{align*}
\left|\min\{F_{n,\theta}(x),\alpha\}-
\min\{F_\theta(x),\alpha\}\right|
&=\left|\min\{F_{n,\theta}(x),\alpha\}-\alpha\right| \\
&=\left(\alpha-F_{n,\theta}(x)\right)_+ \\
&\le\alpha\mathbf 1_{\{F_{n,\theta}(x)\le \alpha\}}.
\end{align*}
Since $F_\theta(x)>2\alpha$, we obtain
\begin{align*}
\mathbb E\left|\min\{F_{n,\theta}(x),\alpha\}-
\min\{F_\theta(x),\alpha\}\right|
&\le\alpha\mathbb P\left(F_{n,\theta}(x)\le\alpha\right) \\
&\le\alpha\mathbb P\left(F_{n,\theta}(x)\le\frac{F_\theta(x)}{2}\right) \\
&\le\alpha\exp\left(-\frac n8 F_\theta(x)\right) \\
&\le\alpha\exp\left(-\frac{n\alpha}{4}\right).
\end{align*}
The third inequality follows from a Chernoff bound for Bernoulli sums; see, for example, the proof of \citet[Corollary~2.3.4]{Vershynin_2026}.

Combining this estimate with Lemma~\ref{lem:cdf-level-interval}, we get
\begin{align*}
\int_{J_2}\mathbb{E} \left| \min\{F_{n,\theta}(x),\alpha\}- \min\{F_\theta(x),\alpha\}
\right|dx &
\le 2M_{\mu,r}(\theta)^{1/r} \alpha\exp\left(-\frac{n\alpha}{4}\right)\left(\min\left\{2\alpha,\frac14\right\}\right)^{-1/r} \\
&\le C_rM_{\mu,r}(\theta)^{1/r} \alpha^{1-1/r} \exp\left(-\frac{n\alpha}{4}\right) \\
&=C_rM_{\mu,r}(\theta)^{1/r}\frac{\alpha^{1/2-1/r}}{\sqrt n}\sqrt{n\alpha}\exp\left(-\frac{n\alpha}{4}\right) \\
&\lesssim_{r}\frac{M_{\mu,r}(\theta)^{1/r}}{\sqrt n} \alpha^{1/2-1/r}.
\end{align*}
In the second inequality, we used $\alpha<1/4$, which implies $\min\left\{2\alpha,\frac14\right\}\ge \alpha.$
The last inequality follows from the fact that $
\sup_{y\ge0}\sqrt y e^{-y/4}<+\infty.$

\textbf{Case 3.} Let
$$
J_3=\left\{x\in\mathbb{R}:\frac34<F_\theta(x)\le1\right\}.
$$
As in Case 2, for $x\in J_3$, we have
$$
\min\{F_\theta(x),\alpha\}=\alpha
$$
and therefore
$$
\left| \min\{F_{n,\theta}(x),\alpha\}-\min\{F_\theta(x),\alpha\}\right|\le\alpha\mathbf 1_{\{F_{n,\theta}(x)\le\alpha\}}.
$$
Since $F_\theta(x)>3/4$ and $\alpha<1/4$, we have
$$F_\theta(x)-\alpha>\frac12.
$$
By Chebyshev's inequality,
\begin{align*}
\mathbb P(F_{n,\theta}(x)\le\alpha)
&\le \mathbb P\left( |F_{n,\theta}(x)-F_\theta(x)|
\ge F_\theta(x)-\alpha
\right) \\
&\le\mathbb P\left(|F_{n,\theta}(x)-F_\theta(x)|\ge
\frac{1}{2}\right) \\
&\le 4\operatorname{Var}(F_{n,\theta}(x)) \\
&=\frac{4F_\theta(x)(1-F_\theta(x))}{n} \\
&\le\frac{4(1-F_\theta(x))}{n}.
\end{align*}
Thus,
\begin{align*}
\int_{J_3}\mathbb E\left|\min\{F_{n,\theta}(x),\alpha\}-\min\{F_\theta(x),\alpha\}\right|dx 
&\le\frac{4\alpha}{n}\int_{J_3}\left(1-F_\theta(x)\right)dx.
\end{align*}

For $j\ge0$, define
$$
E_j \coloneqq \left\{ x\in\mathbb R: 1-2^{-j-2}<F_\theta(x)\le 1-2^{-j-3} \right\}.
$$
Then
\begin{align*}
\int_{J_3}\left(1-F_\theta(x)\right)dx
\le \sum_{j\ge0}\int_{E_j}\left(1-F_\theta(x)\right)dx
\le\sum_{j\ge0}2^{-j-2}|E_j|.
\end{align*}
By Lemma~\ref{lem:cdf-level-interval},
$$ |E_j| \le 2M_{\mu,r}(\theta)^{1/r} \left(\min\{1-2^{-j-2},2^{-j-3}\}\right)^{-1/r}.$$
Since $\min\{1-2^{-j-2},2^{-j-3}\}=2^{-j-3}$,
we get
$$
|E_j|\le C_rM_{\mu,r}(\theta)^{1/r}2^{j/r}.
$$
Therefore,
$$
\int_{J_3}\left(1-F_\theta(x)\right)dx
\le C_rM_{\mu,r}(\theta)^{1/r} \sum_{j\ge0}2^{-j}2^{j/r}\le C_rM_{\mu,r}(\theta)^{1/r},
$$
where the last inequality follows from $1-1/r>0$. Consequently,
\begin{align*}
\int_{J_3}\mathbb E \left| \min\{F_{n,\theta}(x),\alpha\}
-\min\{F_\theta(x),\alpha\} \right|dx
&\lesssim_{r}\frac{\alpha}{n}M_{\mu,r}(\theta)^{1/r} \\
&\lesssim_{r} \frac{M_{\mu,r}(\theta)^{1/r}}{\sqrt n} \alpha^{1/2-1/r}.
\end{align*}
In the last inequality, we used $n^{-1}\le n^{-1/2}$ and, since $r>2$ and $0<\alpha<1$, we have
$$
\alpha\le \alpha^{1/2-1/r}.
$$

It remains to consider the case $\alpha\ge1/4$. In this case, we use the direct bound
$$ \mathbb E\int_0^\alpha \left| F_{n,\theta}^{-1}(u)-F_\theta^{-1}(u)\right|du \le \mathbb E\int_0^1
\left|F_{n,\theta}^{-1}(u)-F_\theta^{-1}(u)
\right|du=\mathbb E \left[
\operatorname W_1 \left( \Pi^\theta_{\#}\widehat\mu_n,
\Pi^\theta_{\#}\mu
\right)
\right].
$$
Therefore, by Lemma~\ref{lem:projected-empirical-rate} applied with $q=1$, 
$$
\mathbb E\int_0^\alpha \left| F_{n,\theta}^{-1}(u)-F_\theta^{-1}(u) \right|du\le C_rM_{\mu,r}(\theta)^{1/r}n^{-1/2}.
$$
Since $\alpha\ge 1/4$ and $1/2-1/r>0$, we have
$$ \alpha^{1/2-1/r} \ge \left(\frac14\right)^{1/2-1/r}.
$$
Equivalently,
$$
1 \le 4^{1/2-1/r}\alpha^{1/2-1/r}.
$$
Thus,
$$
M_{\mu,r}(\theta)^{1/r}n^{-1/2} \le 4^{1/2-1/r} \frac{M_{\mu,r}(\theta)^{1/r}}{\sqrt n} \alpha^{1/2-1/r}.
$$
Absorbing the factor $4^{1/2-1/r}$ into the constant $C_r$, we obtain
$$ \mathbb E\int_0^\alpha \left| F_{n,\theta}^{-1}(u)-F_\theta^{-1}(u) \right|du
\lesssim_r \frac{M_{\mu,r}(\theta)^{1/r}}{\sqrt n} \alpha^{1/2-1/r}.
$$
This completes the proof.
\end{proof}

\section{Omitted Proofs from Sections~\ref{sec:upperboundr>2p}--\ref{sec:upperboundr=2p}}
\subsection{The regime $r>2p$}\label{appendix:upperboundr>2p}
\begin{proof}[Proof of Theorem~\ref{thm:minimax-noncritical} in the regime $r>2p$]
The case $p=1$ has already been established in the discussion of Lemma~\ref{lem:projected-empirical-rate}. We therefore assume that $p>1$.
By Lemma~\ref{lem:decomposition}, for each fixed $\theta\in \mathcal{G}$, the one-dimensional error is bounded by
$C_p\left(\mathsf{A}_{n,\theta}+\mathsf{B}_{m,\theta}+\mathsf{C}_{n,\theta}+\mathsf{D}_{m,\theta}\right).
$
We first control the terms $\mathsf{C}_{n,\theta}$ and $\mathsf{D}_{m,\theta}$. By Lemma~\ref{lem:projected-empirical-rate}, since $r>2p$, we have
$$\mathsf{C}_{n,\theta}\lesssim_{p,r} \frac{M_{\mu,r}(\theta)^{p/r}}{\sqrt n},
\qquad
\mathsf{D}_{m,\theta}\lesssim_{p,r} \frac{M_{\nu,r}(\theta)^{p/r}}{\sqrt m}.
$$
Therefore, using $\mu,\nu\in\mathcal K_{p,r}(b)$ and $n\le m$,
$$
\int_{\mathcal{G}} \mathsf{C}_{n,\theta}d\sigma(\theta) \lesssim_{p,r} \frac{b}{\sqrt n},
\qquad
\int_\mathcal{G}\mathsf{D}_{m,\theta}d\sigma(\theta)\lesssim_{p,r} \frac{b}{\sqrt n}.
$$

It remains to control the weighted terms $\mathsf{A}_{n,\theta}$ and $\mathsf{B}_{m,\theta}$. By Lemma~\ref{lem:weighted-quantile}, we have
$$\mathsf{A}_{n,\theta}\lesssim_{p,r} \frac{M_{\mu,r}(\theta)^{p/r}
+M_{\mu,r}(\theta)^{1/r}M_{\nu,r}(\theta)^{(p-1)/r}
}{\sqrt n}.$$
Similarly,
$$\mathsf{B}_{m,\theta}\lesssim_{p,r}\frac{M_{\nu,r}(\theta)^{p/r}+M_{\nu,r}(\theta)^{1/r}M_{\mu,r}(\theta)^{(p-1)/r}}{\sqrt m}.
$$

We now integrate these estimates over $\mathcal{G}$. First,
$$\int_\mathcal{G} M_{\mu,r}(\theta)^{p/r}d\sigma(\theta)\le b,
\quad\int_\mathcal{G} M_{\nu,r}(\theta)^{p/r}d\sigma(\theta)\le b.
$$
Moreover, by Hölder's inequality,
\begin{align*}
\int_\mathcal{G}
M_{\mu,r}(\theta)^{1/r}M_{\nu,r}(\theta)^{(p-1)/r}
d\sigma(\theta)&\le\left(\int_\mathcal{G} M_{\mu,r}(\theta)^{p/r}\,d\sigma(\theta)\right)^{1/p}\left(
\int_\mathcal{G} M_{\nu,r}(\theta)^{p/r}\,d\sigma(\theta)
\right)^{(p-1)/p}\\
&\le b.
\end{align*}

Similarly, we have
$$
\int_{\mathcal{G}} M_{\nu,r}(\theta)^{1/r}M_{\mu,r}(\theta)^{(p-1)/r}d\sigma(\theta)\le b.
$$
Consequently, using again $n\le m$,
$$
\int_\mathcal{G}\mathsf{A}_{n,\theta}\,d\sigma(\theta)\lesssim_{p,r}\frac{b}{\sqrt n},
\qquad
\int_\mathcal{G}\mathsf{B}_{m,\theta}\,d\sigma(\theta)\lesssim_{p,r}\frac{b}{\sqrt n}.
$$

Combining the estimates for $\mathsf{A}_{n,\theta}$, $\mathsf{B}_{m,\theta}$, $\mathsf{C}_{n,\theta}$, and $\mathsf{D}_{m,\theta}$, we obtain
$$
\mathbb E\left|
\operatorname{SW}_p^p(\widehat\mu_n,\widehat\nu_m)
-\operatorname{SW}_p^p(\mu,\nu)
\right|\lesssim_{p,r} b n^{-1/2}.
$$
We complete the proof by taking the supremum over $\mu,\nu \in \mathcal{K}_{p,r}(b)$.
\end{proof}
\subsection{The regime $p<r<2p$}
\subsubsection{Proof of Lemma~\ref{lem:truncated-weighted-quantile}}\label{appendix:prooflem:truncated-weighted-quantile}
\begin{proof}
We prove the first estimate. The second estimate follows by taking the weight distribution to be $\mu$ instead of $\nu$, and the estimates involving $G_{m,\theta}^{-1}$ are obtained by exchanging the roles of $(\mu,n,F^{-1}_\theta,F^{-1}_{n,\theta})$ and $(\nu,m,G^{-1}_\theta,G^{-1}_{m,\theta})$.

If $M_{\nu,r}(\theta)=0$, then $G_\theta^{-1}(u)=0$ for every $u\in(0,1)$. Consequently, the desired estimate follows immediately. Hence, we assume that $M_{\nu,r}(\theta)\in(0,+\infty).$
Set $K\coloneqq M_{\nu,r}(\theta)^{1/r}>0.$ Let $J_n\ge1$ be an integer to be chosen later. We decompose $(0,1)$ according to the size of $|G_\theta^{-1}(u)|$. Define
$$E^*\coloneqq\left\{u\in(0,1): |G_\theta^{-1}(u)|<K\right\},$$ for $0\le k\le J_n-1$,
$$E_k \coloneqq \left\{ u\in(0,1): 2^kK\le |G_\theta^{-1}(u)|<2^{k+1}K \right\},$$
and $$E_\infty\coloneqq \left\{u\in(0,1):|G_\theta^{-1}(u)|\ge 2^{J_n}K
\right\}.$$
Then $$(0,1)\subset E^*\cup\bigcup_{k=0}^{J_n-1}E_k\cup E_\infty.
$$

\textbf{Step 1: Control on the region $E^*$.} 
On $E^*$, we have $$|G_\theta^{-1}(u)|^{p-1} \le K^{p-1}= M_{\nu,r}(\theta)^{(p-1)/r}.$$
Therefore, by Lemma~\ref{lem:projected-empirical-rate} (with $q=1$ and $s=r$), we obtain
\begin{align*}
\mathbb E\int_{E^*}\left|F_{n,\theta}^{-1}(u)-F_\theta^{-1}(u)\right|\left|G_\theta^{-1}(u)
\right|^{p-1}du &\le
M_{\nu,r}(\theta)^{(p-1)/r}
\mathbb E
\int_0^1\left|
F_{n,\theta}^{-1}(u)-F_\theta^{-1}(u)\right|
du\\
& \le C_r M_{\mu,r}(\theta)^{1/r} M_{\nu,r}(\theta)^{(p-1)/r}\rho_n,
\end{align*}
where
$$
\rho_n=
\begin{cases}
n^{-(r-1)/r}, & 1<r<2,\\
n^{-1/2}\log(n+1), &r=2,\\
n^{-1/2}, & r>2.
\end{cases}
$$
It is worth noting that $\rho_n$ is always bounded by the desired rate in the regime $p<r<2p$. Indeed:
\begin{itemize}
\item If $1<r<2$, then, since $p>1$, we have
$
n^{-(r-1)/r}\le n^{-(r-p)/r}.
$
\item If $r>2$, then, since $r<2p$, we have
$
n^{-1/2}\le n^{-(r-p)/r}.
$
\item If $r=2$, then the condition $p<r<2p$ implies $1<p<2$. In this case, for all $n\ge1$,
$$
\log(n+1)\le \frac{2}{p-1}n^{(p-1)/2}.
$$
Therefore,
$$
n^{-1/2}\log(n+1)
\le
\frac{2}{p-1}n^{-(2-p)/2}.
$$
\end{itemize}
Consequently, in all cases under the condition $p<r<2p$, we have
$\rho_n\le C_{p,r}n^{-(r-p)/r}.
$
Hence
$$\mathbb E \int_{E^*} \left|
F_{n,\theta}^{-1}(u)-F_\theta^{-1}(u) \right|
\left| G_\theta^{-1}(u)\right|^{p-1}du
\le C_{p,r} M_{\mu,r}(\theta)^{1/r} M_{\nu,r}(\theta)^{(p-1)/r} n^{-(r-p)/r}.$$
\textbf{Step 2: Control on the dyadic regions $E_k$, $0\le k\le J_n-1$.}
On $E_k$, we have
$|G_\theta^{-1}(u)|^{p-1}\le(2^{k+1}K)^{p-1}.\\
$
Define $$A_k \coloneqq \left\{ u\in(0,1): |G_\theta^{-1}(u)|\ge 2^kK \right\}.$$

We split the estimate of the shell contribution into three cases.

First, assume that $r>2$. Using Proposition~\ref{prop:peelingtool} and following the same process in Lemma~\ref{lem:weighted-quantile}, we obtain
$$\mathbb E \int_{E_k} \left|F_{n,\theta}^{-1}(u)-F_\theta^{-1}(u) \right|\left| G_\theta^{-1}(u)\right|^{p-1} du \lesssim_{p,r}M_{\mu,r}(\theta)^{1/r} M_{\nu,r}(\theta)^{(p-1)/r}n^{-1/2} 2^{k(p-r/2)}.$$
As a result, summing over $0\le k\le J_n-1$, and using $p-r/2>0$, we get
$$\sum_{k=0}^{J_n-1} \mathbb E\int_{E_k}\left|F_{n,\theta}^{-1}(u)-F_\theta^{-1}(u)\right|\left|G_\theta^{-1}(u)\right|^{p-1} du \lesssim_{p,r} M_{\mu,r}(\theta)^{1/r} M_{\nu,r}(\theta)^{(p-1)/r}
n^{-1/2}2^{J_n(p-r/2)}.$$

Second, assume that $1 <r < 2$. In this case, we use Lemma~\ref{lem:projected-empirical-rate}. To be more specific, we have
\begin{align*}
\mathbb E \int_{E_k} \left|
F_{n,\theta}^{-1}(u)-F_\theta^{-1}(u)
\right| \left| G_\theta^{-1}(u)
\right|^{p-1} du &\le 2^{(k+1)(p-1)}K^{p-1}\mathbb{E}\int_{E_k}\left|
F_{n,\theta}^{-1}(u)-F_\theta^{-1}(u)
\right| du\\
& \le 2^{(k+1)(p-1)}M_{\nu,r}(\theta)^{(p-1)/r}\mathbb{E}\left(\operatorname{W}_1(\Pi^{\theta}_{\#}\widehat{\mu}_n,\Pi^{\theta}_{\#}\mu)\right)\\
& \lesssim_{p,r}2^{k(p-1)} M_{\nu,r}(\theta)^{(p-1)/r}M_{\mu,r}(\theta)^{1/r}n^{-(r-1)/r}
\end{align*}
Summing over $0\le k \le J_n-1$, and using $p>1$, we obtain
$$\sum_{k=0}^{J_n-1} \mathbb E\int_{E_k}\left|F_{n,\theta}^{-1}(u)-F_\theta^{-1}(u)\right|\left|G_\theta^{-1}(u)
\right|^{p-1} du \lesssim_{p,r} M_{\mu,r}(\theta)^{1/r} M_{\nu,r}(\theta)^{(p-1)/r}n^{-(r-1)/r}2^{J_n(p-1)}.$$

Third, assume that $r=2$. Since $p<r<2p$, we have $1<p<2$. Therefore, we can choose $q_0=q_0(p)$ such that $1<q_0<2/(3-p).$

On $E_k$, we have
$$|G_\theta^{-1}(u)|^{p-1}\le(2^{k+1}K)^{p-1}.$$
Denote
$$A_k=\left\{u\in(0,1):|G_\theta^{-1}(u)|\ge 2^kK\right\}.
$$
Then $E_k\subset A_k$. Moreover, for the Lebesgue measure $\lambda$ on $(0,1)$, we also have
$$\lambda(A_k)=\mathbb P_{U\sim\operatorname{Unif}(0,1)}(U\in A_k)=
\mathbb P_{U\sim\operatorname{Unif}(0,1)}\left(\left|G_\theta^{-1}(U)\right|\ge 2^kK\right)=\mathbb P_{Y\sim\nu}
\left(|\langle Y,\theta\rangle|\ge 2^kK
\right).
$$
In the last equality, we used the fact that if $U\sim\operatorname{Unif}(0,1)$, then $G_\theta^{-1}(U)$ has distribution $\Pi^\theta_{\#}\nu$. \\
Then, using Markov's inequality, we have
\begin{equation*}
\lambda(A_k) \le \frac{M_{\nu,2}(\theta)}{(2^{k}K)^2}=2^{-2k}.
\end{equation*}

By Hölder's inequality with exponent $q_0$, we have
$$
\begin{aligned}
\int_{E_k}\left|F_{n,\theta}^{-1}(u)-F_\theta^{-1}(u)\right|du
&\le\int_{A_k}\left|F_{n,\theta}^{-1}(u)-F_\theta^{-1}(u)\right|du \\
&\le\lambda(A_k)^{1-1/q_0}\left(\int_0^1
\left|F_{n,\theta}^{-1}(u)-F_\theta^{-1}(u)\right|^{q_0}du
\right)^{1/q_0}.
\end{aligned}
$$
Taking expectation and using Jensen's inequality, we obtain
$$\begin{aligned}\mathbb E\int_{E_k}\left|F_{n,\theta}^{-1}(u)-F_\theta^{-1}(u)\right|du
&\le\lambda(A_k)^{1-1/q_0}\left[\mathbb E\int_0^1
\left|F_{n,\theta}^{-1}(u)-F_\theta^{-1}(u)\right|^{q_0}du
\right]^{1/q_0} \\
&=\lambda(A_k)^{1-1/q_0}\left[\mathbb E\operatorname{W}_{q_0}^{q_0}
\left(\Pi^\theta_{\#}\widehat{\mu}_n,\Pi^\theta_{\#}\mu
\right)\right]^{1/q_0}.
\end{aligned}
$$
Since $q_0>1$ and $r=2$, we have $q_0<2<2q_0.$ Therefore, by Lemma~\ref{lem:projected-empirical-rate} applied with $q=q_0$ and $s=2$,
$$\mathbb E\operatorname{W}_{q_0}^{q_0} \left( \Pi^\theta_{\#}\widehat{\mu}_n,\Pi^\theta_{\#}\mu \right) 
\lesssim_{q_0}M_{\mu,2}(\theta)^{q_0/2}n^{-(2-q_0)/2}.
$$
Combining this estimate with $\lambda(A_k)\le 2^{-2k}$ gives
$$\mathbb E \int_{E_k} \left|F_{n,\theta}^{-1}(u)-F_\theta^{-1}(u)\right|du\lesssim_{q_0} M_{\mu,2}(\theta)^{1/2} 2^{-2k(1-1/q_0)}n^{-(2-q_0)/(2q_0)}.
$$
Consequently,
$$\begin{aligned}
&\mathbb E\int_{E_k}\left|F_{n,\theta}^{-1}(u)-F_\theta^{-1}(u)\right|
\left|G_\theta^{-1}(u)\right|^{p-1}du \\
&\quad\lesssim_{p,q_0}M_{\mu,2}(\theta)^{1/2}M_{\nu,2}(\theta)^{(p-1)/2}n^{-(2-q_0)/(2q_0)}2^{k(p-3+2/q_0)}.
\end{aligned}
$$
By our choice of $q_0$, we have $p-3+\frac{2}{q_0}>0.$ Hence,
$$ \sum_{k=0}^{J_n-1} 2^{k(p-3+2/q_0)}\lesssim_{p,q_0} 2^{J_n(p-3+2/q_0)}.
$$
Therefore,
$$\begin{aligned}
&\sum_{k=0}^{J_n-1}\mathbb E\int_{E_k}\left|F_{n,\theta}^{-1}(u)-F_\theta^{-1}(u)\right|\left|G_\theta^{-1}(u)\right|^{p-1}du \\
&\quad\lesssim_{p,q_0}M_{\mu,2}(\theta)^{1/2}M_{\nu,2}(\theta)^{(p-1)/2}n^{-(2-q_0)/(2q_0)}2^{J_n(p-3+2/q_0)}.
\end{aligned}
$$
% Choose $K_n$ so that
% $$
% 2^{K_n}\le 2n^{1/2}.
% $$
% Then
% $$
% \begin{aligned}
% n^{-(2-q_0)/(2q_0)} 2^{K_n(p-3+2/q_0)}
% &\le
% C_{p,q_0} n^{-(2-q_0)/(2q_0)}
% n^{(p-3+2/q_0)/2} \\
% &= C_{p,q_0} n^{-(2-p)/2}.
% \end{aligned}
% $$
% Hence,
% $$
% \sum_{k=0}^{K_n-1} \mathbb E \int_{E_k}
% \left|F_{n,\theta}^{-1}(u)-F_\theta^{-1}(u)\right| \left|G_\theta^{-1}(u)\right|^{p-1}du \le C_{p,q_0} M_{\mu,2}(\theta)^{1/2}
% M_{\nu,2}(\theta)^{(p-1)/2} n^{-(2-p)/2}.
% $$
% Since $r=2$, this is exactly
% $$
% C_{p,r} M_{\mu,r}(\theta)^{1/r} M_{\nu,r}(\theta)^{(p-1)/r} n^{-(r-p)/r}.
% $$
\textbf{Step 3: Control on the far-tail region $E_\infty$.}
Let $\lambda$ denote the Lebesgue measure on $(0,1)$. By the same quantile representation and Markov argument used in Step 2, with threshold $2^{J_n}K$, we obtain
% $$\lambda(E_{\infty})=
% \mathbb P_{U\sim\operatorname{Unif}(0,1)}(U\in E_{\infty})=
% \mathbb P_{U\sim\operatorname{Unif}(0,1)}\left(\left|G_\theta^{-1}(U)\right|\ge 2^{K_n}K\right)=\mathbb P_{Y\sim\nu}
% \left(|\langle Y,\theta\rangle|\ge 2^{K_n}K
% \right),
% $$
% where $\lambda$ denotes the Lebesgue measure on $(0,1)$. In the last equality, we used the fact that if $U\sim\operatorname{Unif}(0,1)$, then $G_\theta^{-1}(U)$ has distribution $\Pi^\theta_{\#}\nu$. 
% Then, using Markov's inequality, we have
\begin{equation*}
\lambda(E_{\infty}) \le \frac{M_{\nu,r}(\theta)}{(2^{J_n}K)^r}=2^{-J_nr}.
\end{equation*}
Using the triangle inequality, we have
\begin{equation*}
\begin{aligned}
&\mathbb{E}\int_{E_\infty} \left|F^{-1}_{n,\theta}(u)-F^{-1}_{\theta}(u)\right|
\left|G^{-1}_{\theta}(u)\right|^{p-1}d u \\
&\quad \le 
\underbrace{\int_{E_{\infty}}\left|F^{-1}_{\theta}(u)\right|
\left|G^{-1}_{\theta}(u)\right|^{p-1}du}_{I_1} +\underbrace{\mathbb{E}\int_{E_{\infty}}\left|F^{-1}_{n,\theta}(u)\right|\left|G^{-1}_{\theta}(u)\right|^{p-1} du }_{I_2}.
\end{aligned}
\end{equation*}

\textbf{Step 3.1:  Control $I_1$.} By Hölder's inequality, 
\begin{equation*}
\int_{E_{\infty}}\left|F^{-1}_{\theta}(u)\right|\left|G^{-1}_{\theta}(u)\right|^{p-1}du \le \left(\int_{E_{\infty}}\left|F^{-1}_{\theta}(u)\right|^pdu\right)^{1/p}\left(\int_{E_{\infty}}\left|G^{-1}_{\theta}(u)\right|^pdu\right)^{(p-1)/p}.
\end{equation*}
On the other hand, by Hölder's inequality, we also have
\begin{equation*}
\int_{E_{\infty}}\left|F^{-1}_{\theta}(u)\right|^pdu =\int_{0}^{1}\left|F^{-1}_{\theta}(u)\right|^p\mathbbm{1}_{E_{\infty}}du\le \left(\int_{0}^{1}\left|F^{-1}_{\theta}(u)\right|^rdu\right)^{p/r}\left(\lambda(E_{\infty})\right)^{(r-p)/r}
\end{equation*}
Using the fact that $\lambda(E_{\infty})\le 2^{-J_nr}$ and also the definition of $M_{\mu,r}(\theta)$, we obtain
\begin{equation*}
\int_{E_{\infty}}\left|F^{-1}_{\theta}(u)\right|^pdu  \le M_{\mu,r}(\theta)^{p/r}2^{-J_n(r-p)}.
\end{equation*}
Moreover, since $\left|G^{-1}_{\theta}(u)\right|\ge 2^{J_n}K$ on $E_{\infty}$ and $p<r$, we obtain
\begin{equation*}
\int_{E_\infty}\left|G^{-1}_{\theta}(u)\right|^pdu \le \left(2^{J_n}K\right)^{p-r}\int_{0}^{1}\left|G^{-1}_{\theta}(u)\right|^rdu=\left(2^{J_n}K\right)^{p-r}M_{\nu,r}(\theta)=M_{\nu,r}(\theta)^{p/r}2^{-J_n(r-p)}.
\end{equation*}
Therefore, 
\begin{equation*}
\int_{E_\infty}\left|F^{-1}_{\theta}(u)\right|\left|G^{-1}_{\theta}(u)\right|^{p-1}du \le M_{\mu,r}(\theta)^{1/r}M_{\nu,r}(\theta)^{(p-1)/r}2^{-J_n(r-p)}.
\end{equation*}
\textbf{Step 3.2: Control $I_2$.} By Hölder's inequality, 
\begin{equation*}
\int_{E_{\infty}} \left|F^{-1}_{n,\theta}(u)\right|\left|G^{-1}_{\theta}(u)\right|^{p-1}du \le \left(\int_{E_\infty}\left|F^{-1}_{n,\theta}(u)\right|^pdu\right)^{1/p}\left(\int_{E_\infty}\left|G^{-1}_{\theta}(u)\right|^pdu\right)^{(p-1)/p}.
\end{equation*}
Taking expectation and using Jensen's inequality, we have
\begin{equation*}
\mathbb{E}\int_{E_{\infty}} \left|F^{-1}_{n,\theta}(u)\right|\left|G^{-1}_{\theta}(u)\right|^{p-1}du \le \left(\mathbb{E}\int_{E_\infty}\left|F^{-1}_{n,\theta}(u)\right|^pdu\right)^{1/p}\left(\int_{E_\infty}\left|G^{-1}_{\theta}(u)\right|^pdu\right)^{(p-1)/p}.
\end{equation*}
Since $F^{-1}_{n,\theta}$ only takes values among the projected observations, we have
\begin{equation*}
\int_{E_{\infty}}\left|F^{-1}_{n,\theta}(u)\right|^pdu \le \lambda(E_{\infty})\max_{1 \le i \le n} \left|\langle X_i,\theta\rangle\right|^p \le 2^{-J_nr}\max_{1 \le i \le n} \left|\langle X_i,\theta\rangle\right|^p. 
\end{equation*}
Consequently, with $p<r$, we have
$$\mathbb{E}\int_{E_{\infty}}\left|F^{-1}_{n,\theta}(u)\right|^pdu  \le 2^{-J_nr}\mathbb{E} \max_{1 \le i \le n} \left|\langle X_i,\theta\rangle \right|^p \le 2^{-J_nr}\left(\mathbb{E} \max_{1 \le i \le n} \left|\langle X_i,\theta\rangle \right|^r\right)^{p/r}.$$
Using
$\max_{1\le i\le n}|\langle X_i,\theta\rangle|^r
\le\sum_{i=1}^n|\langle X_i,\theta\rangle|^r$, we deduce that
\begin{equation*}
\mathbb{E}\int_{E_{\infty}}\left|F^{-1}_{n,\theta}(u)\right|^pdu
\le M_{\mu,r}(\theta)^{p/r}n^{p/r}2^{-J_nr}.
\end{equation*}
% \begin{align*}
% \mathbb{E}\int_{E_{\infty}}\left|F^{-1}_{n,\theta}(u)\right|^pdu & \le 2^{-K_nr}\mathbb{E} \max_{1 \le i \le n} \left|\langle X_i,\theta\rangle \right|^p\\
% &\le 2^{-K_nr}\left(\mathbb{E} \max_{1 \le i \le n} \left|\langle X_i,\theta\rangle \right|^r\right)^{p/r}\\
% & \le 2^{-K_nr}\left(\sum_{i=1}^{n}\mathbb{E}\left|\langle X_i,\theta\rangle \right|^r\right)^{p/r}\\
% &= M_{\mu,r}(\theta)^{p/r}n^{p/r}2^{-K_nr}.
% \end{align*}
Combining this estimate with the previous bound on the $G^{-1}_{\theta}$ in \textbf{Step 3.1} gives
\begin{equation*}
\mathbb{E}\int_{E_\infty} \left|F^{-1}_{n,\theta}(u)\right|\left|G^{-1}_{\theta}(u)\right|^{p-1}du \lesssim_{p,r}M_{\mu,r}(\theta)^{1/r}M_{\nu,r}(\theta)^{(p-1)/r}n^{1/r}2^{-J_n(r-p+1)}.
\end{equation*}

Consequently, combining the estimates for $I_1$ and $I_2$, we obtain
\begin{equation*}
\begin{aligned}
&\mathbb{E}\int_{E_{\infty}}\left|F^{-1}_{n,\theta}(u)-F^{-1}_{\theta}(u)\right|\left|G^{-1}_{\theta}(u)\right|^{p-1}du \\
&\quad \lesssim_{p,r}M_{\mu,r}(\theta)^{1/r}
M_{\nu,r}(\theta)^{(p-1)/r}\left(2^{-J_n(r-p)}+n^{1/r}2^{-J_n(r-p+1)}
\right).
\end{aligned}
\end{equation*}
\textbf{Step 4: Choosing $J_n$.}
Combining the preceding steps, we obtain that
\begin{align*}
&\mathbb E\int_0^1\left|F_{n,\theta}^{-1}(u)-F_\theta^{-1}(u)\right|
\left|G_\theta^{-1}(u)\right|^{p-1}du \\
&\quad \lesssim_{p,r}M_{\mu,r}(\theta)^{1/r}
M_{\nu,r}(\theta)^{(p-1)/r}\left[n^{-(r-p)/r}+\mathcal{T}_{n,J_n}+2^{-J_n(r-p)}+n^{1/r}2^{-J_n(r-p+1)}\right],
\end{align*}
where
\begin{equation*}
\mathcal{T}_{n,J_n}=\begin{cases}n^{-(r-1)/r}2^{J_n(p-1)}, & 1 <r<2\\
n^{-(2-q_0)/(2q_0)}2^{J_n(p-3+2/q_0)}, & r=2\\
n^{-1/2}2^{J_n(p-r/2)},& r>2.
\end{cases}
\end{equation*}
We now choose $J_n=\max\left\{1,\left\lceil \frac{1}{r}\log_2 n\right\rceil\right\}.\\
$
With this choice, each term in the bracket is bounded by a constant multiple of $n^{-(r-p)/r}$. Therefore,
$$\mathbb E\int_0^1\left|F_{n,\theta}^{-1}(u)-F_\theta^{-1}(u)\right|
\left|G_\theta^{-1}(u)\right|^{p-1}du\lesssim_{p,r}M_{\mu,r}(\theta)^{1/r}M_{\nu,r}(\theta)^{(p-1)/r}n^{-(r-p)/r}.
$$
\end{proof}
\subsubsection{Proof of Theorem~\ref{thm:minimax-noncritical} in the regime $p<r<2p$}
\begin{proof}[Proof of Theorem~\ref{thm:minimax-noncritical} in the regime
$p<r<2p$]
The case $p=1$ follows directly from Lemma~\ref{lem:projected-empirical-rate}
and the reverse triangle inequality for $\operatorname{W}_1$. We therefore assume that $p>1$.

The proof then follows the same aggregation argument as in the regime
$r>2p$. Indeed, Lemma~\ref{lem:decomposition} reduces the directional error
to the four terms $\mathsf{A}_{n,\theta}$, $\mathsf{B}_{m,\theta}$, $\mathsf{C}_{n,\theta}$, and
$\mathsf{D}_{m,\theta}$. By Lemma~\ref{lem:projected-empirical-rate}, since
$p<r<2p$,
$$\mathsf{C}_{n,\theta}
\lesssim_{p,r}M_{\mu,r}(\theta)^{p/r}n^{-(r-p)/r},
\qquad \mathsf{D}_{m,\theta}
\lesssim_{p,r}M_{\nu,r}(\theta)^{p/r}m^{-(r-p)/r},
$$
while Lemma~\ref{lem:truncated-weighted-quantile} gives the same
sample-size rates for $\mathsf{A}_{n,\theta}$ and $\mathsf{B}_{m,\theta}$.

Integrating over $\mathcal{G}$ and applying Hölder's inequality exactly as in the
proof for the regime $r>2p$, together with
$\mu,\nu\in\mathcal K_{p,r}(b)$ and $n\le m$, yields
$$
\mathbb E\left|
\operatorname{SW}_p^p(\widehat\mu_n,\widehat\nu_m)-
\operatorname{SW}_p^p(\mu,\nu)
\right|\lesssim_{p,r}b\,n^{-(r-p)/r}.
$$
Taking the supremum over $\mu,\nu\in\mathcal K_{p,r}(b)$ completes the proof.
\end{proof}
\subsection{The regime $r=2p$}
\subsubsection{Proof of Lemma~\ref{lem:projected-empirical-ratev2}}\label{appendix:proofoflem:projected-empirical-ratev2}
\begin{proof}
Fix $\theta\in\mathbb S^{d-1}$ such that $M_{Q,2q}(\theta)<\infty$. If $\mathbb{E}_{X \sim Q}\left|\langle X,\theta\rangle \right|^{2q}=0$, then $\Pi^{\theta}_{\#}Q=\delta_{0}$ and $\Pi^{\theta}_{\#}\widehat{Q}_n=\delta_{0}$ almost surely. Hence the desired estimate is immediate. Thus, in the sequel, we assume that $\mathbb{E}_{X \sim Q}\left|\langle X,\theta\rangle \right|^{2q}>0$.

Applying \citet[Proposition~7.14]{bobkov2019one} then using Tonelli's theorem, we obtain
\begin{equation*}
\mathbb E\left[\operatorname{W}_q^q\left( \Pi^\theta_{\#}\widehat{Q}_n,\Pi^\theta_{\#}Q\right) \right] \le q2^{q-1}\int_{-\infty}^{+\infty} \left|x\right|^{q-1}\mathbb{E}\left|F_{n,\theta}(x)-F_{\theta}(x)\right|dx, 
\end{equation*}
where $F_{\theta}$ and $F_{n,\theta}$ denote the cumulative distribution functions of $\Pi^\theta_{\#}Q$ and $\Pi^\theta_{\#}\widehat{Q}_n$, respectively. We emphasize that this notation is used only within the present proof.

Following the proof of \citet[Corollary~7.18]{bobkov2019one}, we obtain
\begin{equation*}
\mathbb E\left[\operatorname{W}_q^q\left( \Pi^\theta_{\#}\widehat{Q}_n,\Pi^\theta_{\#}Q\right) \right]  \le q2^{q-1}\int_{-\infty}^{+\infty}\left|x\right|^{q-1}\min \left\{2F_{\theta}(x)(1-F_{\theta}(x)),\sqrt{\frac{F_{\theta}(x)(1-F_{\theta}(x))}{n}}\right\}dx.
\end{equation*}
Denote $T(u) \coloneqq \mathbb{P}_{X\sim Q}\left(\left|\langle X,\theta\rangle \right|\ge u\right)$. Then, for every $x\in\mathbb{R}$, we have
\begin{equation*}
F_{\theta}(x)(1-F_{\theta}(x)) \le \min \left\{F_{\theta}(x),1-F_{\theta}(x)\right\} \le T\left(\left|x\right|\right).
\end{equation*}
Moreover, the function $s \mapsto \min \left\{2s,\sqrt{\dfrac{s}{n}}\right\}$ is nondecreasing on $[0,\infty)$. Therefore,
\begin{align*}
\mathbb E\left[\operatorname{W}_q^q\left( \Pi^\theta_{\#}\widehat{Q}_n,\Pi^\theta_{\#}Q\right) \right] &\le q2^{q-1}\int_{-\infty}^{+\infty}\left|x\right|^{q-1}\min \left\{2T\left(\left|x\right|\right),\sqrt{\frac{T\left(\left|x\right|\right)}{n}}\right\}dx\\
&= q2^{q}\int_{0}^{+\infty}x^{q-1}\min \left\{2T\left(x\right),\sqrt{\frac{T\left(x\right)}{n}}\right\}dx.
\end{align*}
We now decompose the integral into three main parts: from $0$ to $R_1$, from $R_1$ to $R_2$, and from $R_2$ to $+\infty$, where the cut-off levels $R_1$ and $R_2$ will be chosen later.

For the first term, since $T(x)\le 1$ for all $x\ge 0$, we have
\begin{align*}
I_1\coloneqq q2^{q}\int_{0}^{R_1}x^{q-1}\min \left\{2T\left(x\right),\sqrt{\frac{T\left(x\right)}{n}}\right\}dx
\le \frac{2^{q} R_1^q}{\sqrt{n}}.
\end{align*}
Choosing $R_1\coloneqq \left(\mathbb{E}_{X \sim Q}\left|\langle X,\theta\rangle \right|^{2q}\right)^{1/(2q)}$, we obtain
\begin{equation*}
I_1 \le \frac{2^q\left(\mathbb{E}_{X \sim Q}\left|\langle X,\theta\rangle \right|^{2q}\right)^{1/2}}{\sqrt{n}} \lesssim_{q} \left(M_{Q,2q}(\theta)\right)^{1/2}\sqrt{\log(n+1)n^{-1}}.
\end{equation*}

For the second term, we have
\begin{equation*}
I_2 \coloneqq q2^{q}\int_{R_1}^{R_2}x^{q-1}\min\left\{2T(x),\sqrt{\frac{T(x)}{n}}\right\}dx \le \frac{q2^{q}}{\sqrt{n}}\int_{R_1}^{R_2}\sqrt{T(x)x^{2q}}\frac{dx}{x}.
\end{equation*}
Using Hölder's inequality, we have
\begin{equation*}
I_2 \le \frac{q2^{q}}{\sqrt{n}}\left(\int_{R_1}^{R_2}\frac{dx}{x}\right)^{1/2}\left(\int_{R_1}^{R_2}T(x)x^{2q}\frac{dx}{x}\right)^{1/2}\le\frac{q2^{q}}{\sqrt{n}}\sqrt{\log \frac{R_2}{R_1}}\left(\int_{0}^{+\infty}T(x)x^{2q-1}dx\right)^{1/2}.
\end{equation*}
Using the tail integral formula for moments, we have
\begin{equation*}
\displaystyle \int_{0}^{+\infty}T(x)x^{2q-1}dx=\dfrac{M_{Q,2q}(\theta)}{2q}.
\end{equation*}
Hence,
\begin{equation*}
I_2 \le \frac{q2^q}{\sqrt{n}}\sqrt{\log \frac{R_2}{R_1}}\sqrt{\frac{M_{Q,2q}(\theta)}{2q}}.
\end{equation*}
Choosing $R_2\coloneqq R_1(n+1)$, we finally obtain
\begin{equation*}
I_2 \lesssim_{q}\left(M_{Q,2q}(\theta)\right)^{1/2}\sqrt{\log(n+1)n^{-1}}.
\end{equation*}

For the last term, we have
\begin{equation*}
I_3 \coloneqq q2^{q}\int_{R_2}^{+\infty}x^{q-1}\min \left\{2T(x),\sqrt{\frac{T(x)}{n}}\right\}dx \le q2^{q+1}\int_{R_2}^{+\infty}x^{q-1}T(x) dx.
\end{equation*}
Since $x \ge R_2$, we have $x^{q-1}\le R_2^{-q}x^{2q-1}$. Therefore,
\begin{equation*}
I_3 \le q2^{q+1}R_{2}^{-q}\int_{0}^{+\infty}x^{2q-1}T(x)dx = q2^{q+1}R^{-q}_2\frac{M_{Q,2q}(\theta)}{2q}.
\end{equation*}
Using the definition of $R_2$, we obtain
\begin{equation*}
I_3 \lesssim_{q} (M_{Q,2q}(\theta))^{1/2}\frac{1}{(n+1)^q} \lesssim_{q} \left(M_{Q,2q}(\theta)\right)^{1/2}\sqrt{\log(n+1)n^{-1}}
\end{equation*}
Finally, combining the bounds for $I_1, I_2$ and $I_3$ completes the proof.
\end{proof}
\subsubsection{Proof of Lemma~\ref{lem:endpointweightedquantile}}\label{appendix:proofoflem:endpointweightedquantile}
\begin{proof}
% Throughout the proof, $C_p>0$ denotes a constant depending only on $p$, while $C,c>0$ denote universal constants. The values of $C_p$ and $C$ may change from line to line.
Set 
\begin{equation*}
M_{\theta}\coloneqq M_{\mu,2p}(\theta) +M_{\nu,2p}(\theta).
\end{equation*}
If $M_{\theta}=0$, then $M_{\mu,2p}(\theta)=M_{\nu,2p}(\theta)=0$. Thus, $F^{-1}_{\theta}(u)=G^{-1}_{\theta}(u)=0$ for every $u \in (0,1)$ and the desired estimate follows immediately. We therefore assume that $M_{\theta}>0.$

Define
\begin{equation*}
H_{\theta}(u)\coloneqq \max \left\{\left|F^{-1}_{\theta}(u)\right|, \left|G^{-1}_{\theta}(u)\right| \right\}, \quad u \in (0,1).
\end{equation*}
To obtain the desired estimate, it is enough to show that
$$\mathbb{E}\displaystyle \int_{0}^{1}\left|F^{-1}_{n,\theta}(u)-F^{-1}_{\theta}(u)\right|H_{\theta}(u)^{p-1}du\lesssim_{p}\left(M_{\mu,2p}(\theta)+M_{\nu,2p}(\theta)\right)^{1/2}\sqrt{\frac{\log(n+1)}{n}}.$$
We decompose $(0,1)$ according to the size of $H_{\theta}(u)$. Define
$$E^* \coloneqq \left\{ u\in(0,1): H_\theta(u)< M^{1/(2p)}_{\theta}
\right\},
$$
and, for $k\ge0$,
$$ E_k \coloneqq \left\{ u\in(0,1): 2^kM^{1/(2p)}_{\theta}\le H_\theta(u)<2^{k+1}M^{1/(2p)}_{\theta}\right\}.$$
Then $$(0,1)=E^*\cup\bigcup_{k\ge0}E_k.$$

\textbf{Step 1: Control on the region $E^*$.} 
On $E^*$, we have
\begin{equation*}
H^{p-1}_{\theta}(u)\le M_{\theta}^{(p-1)/(2p)}.
\end{equation*}
Therefore, by Lemma~\ref{lem:projected-empirical-rate} (with $q=1$, $s=2p$), we get
\begin{equation*}
\mathbb{E}\int_{E^*}\left|F^{-1}_{n,\theta}(u)-F^{-1}_{\theta}(u)\right|H^{p-1}_{\theta}(u) du \lesssim_{p}M_{\theta}^{(p-1)/(2p)}\left(M_{\mu,2p}(\theta)\right)^{1/(2p)}n^{-1/2}\lesssim_{p}M^{1/2}_{\theta}\sqrt{\frac{\log (n+1)}{n}}.
\end{equation*}

\textbf{Step 2: Control on the remaining regions $E_k$ for $k \ge 0$.}
In this step, we first need the following auxiliary lemma.

\medskip\noindent\textbf{Auxiliary lemma.} For $k\ge 0$, we set
\begin{equation*}
L_{k}\coloneqq \left\{u \in (0,1):H_{\theta}(u)\ge 2^kM_{\theta}^{1/(2p)}\right\}.
\end{equation*}
Let $S_k\coloneqq \lambda(L_k)$ where $\lambda$ denotes Lebesgue measure on $(0,1)$. Define
\begin{equation*}
m_{k}\coloneqq \min\left\{S_k,\sqrt{\frac{S_k}{n}}\right\}.
\end{equation*}
Then there exists a universal constant $C>0$ such that, for every $k\ge 0$,
\begin{equation*}
\mathbb{E}\int_{L_k}\left|F^{-1}_{n,\theta}(u)-F^{-1}_{\theta}(u)\right|du \le CM_{\theta}^{1/(2p)}\sum_{j\ge k}2^jm_j.
\end{equation*}
\begin{proof}[Proof of the auxiliary lemma]
Since $F^{-1}_{\theta}$ and $G^{-1}_{\theta}$ are non-decreasing, each set of the form
\begin{equation*}
\left\{u \in (0,1):\left|F^{-1}_{\theta}(u)\right|\ge 2^kM^{1/(2p)}_{\theta}\right\} \quad \text{or} \quad \left\{u \in (0,1):\left|G^{-1}_{\theta}(u)\right|\ge 2^kM^{1/(2p)}_{\theta}\right\}
\end{equation*}
is the union of a left endpoint interval and a right endpoint interval. Therefore, $L_k$ is the union of at most one left endpoint interval and at most one right endpoint interval. We treat only the left endpoint since the right endpoint interval can be handled similarly.

Let $a_k$ denote the length of the left endpoint part of $L_k$. Then, by the definition, we have $0 \le a_k\le S_{k}$. By Lemma~\ref{lem:Quantile-CDF} and Tonelli's theorem, we have
$$ \mathbb{E}\int_0^{a_k} \left| F_{n,\theta}^{-1}(u)-F_\theta^{-1}(u)\right|du=\int_{\mathbb R}\mathbb{E}\left|
\min\{F_{n,\theta}(x),a_k\}-\min\{F_\theta(x),a_k\}
\right|dx.
$$
We split the integral in $x$ into two regions: $\left|x\right| \le 2^kM^{1/(2p)}_{\theta}$ and $\left|x\right| > 2^kM^{1/(2p)}_{\theta}$.

Since the map $y \mapsto \min\{y,a_k\}$ is $1$-Lipschitz, for any $x \in \mathbb{R}$, we have
\begin{equation*}
\mathbb{E}\left|
\min\{F_{n,\theta}(x),a_k\}-\min\{F_\theta(x),a_k\}
\right| \le \mathbb{E}\left|F_{n,\theta}(x)-F_{\theta}(x)\right|\le \sqrt{\frac{F_{\theta}(x)(1-F_{\theta}(x))}{n}}.
\end{equation*}
If $F_{\theta}(x) \le 2S_k$, then 
\begin{equation*}
\mathbb{E}\left|
\min\{F_{n,\theta}(x),a_k\}-\min\{F_\theta(x),a_k\}
\right| \le \sqrt{\frac{2S_k}{n}}.
\end{equation*}
If $F_{\theta}(x)>2S_k$, then $a_k\le S_k<F_{\theta}(x)/2$. As a consequence, 
\begin{equation*}
\left|\min\{F_{n,\theta}(x),a_k\}-\min\{F_\theta(x),a_k\}
\right|=\left|\min\{F_{n,\theta}(x),a_k\}-a_k
\right| \le a_k\mathbbm{1}_{\left\{F_{n,\theta}(x)\le a_k\right\}}.
\end{equation*}
Since $a_k \le S_k$, we have
\begin{equation*}
\mathbb{E}\left|
\min\{F_{n,\theta}(x),a_k\}-\min\{F_\theta(x),a_k\}
\right|\le S_k\mathbb{P}\left(F_{n,\theta}(x)\le S_k\right).
\end{equation*}
Using $S_k \le F_{\theta}(x) /2$ and Chernoff's inequality (see, for example, the proof of \citet[Corollary~2.3.4]{Vershynin_2026}), we have
\begin{equation*}
\mathbb{P}\left(F_{n,\theta}(x)\le S_k\right) \le \mathbb{P}\left(F_{n,\theta}(x)\le F_{\theta}(x)/2\right)\le \exp\left(-cnF_\theta(x)\right)\le \exp(-cnS_k).
\end{equation*}
Thus,
\begin{equation*}
S_k\mathbb{P}\left(F_{n,\theta}(x)\le S_k\right) \le S_k\exp\left(-cnS_k\right)=\sqrt{\frac{S_k}{n}}\sqrt{nS_k} \exp\left(-cnS_k\right)\le C\sqrt{\frac{S_k}{n}},
\end{equation*}
where we used the fact that the function $z\mapsto \sqrt ze^{-cz}$ is bounded on $[0,+\infty)$.
On the other hand, we also have
\begin{equation*}
\left|\min\{F_{n,\theta}(x),a_k\}-\min\{F_\theta(x),a_k\}
\right|\le a_k \le S_k.
\end{equation*}
Consequently, combining the estimates above, for any $x \in \mathbb{R}$, we have
\begin{equation*}
\mathbb{E}
\left|\min\left\{F_{n,\theta}(x),a_k\right\}-\min\left\{F_{\theta}(x),a_k\right\}\right|
\le Cm_k.
\end{equation*}
Hence,
\begin{equation*}
\int_{\left|x\right|\le 2^kM^{1/(2p)}_{\theta}}\mathbb{E}\left|\min\left\{F_{n,\theta}(x),a_k\right\}-\min\left\{F_{\theta}(x),a_k\right\}\right|dx \le C2^kM_{\theta}^{1/(2p)}m_k.
\end{equation*}

For the second region, as we did in the proof of Lemma~\ref{lem:projected-empirical-ratev2}, following the proof in \citet[Corollary~7.17]{bobkov2019one}, we have
\begin{align*}
\mathbb{E}\left|
\min\{F_{n,\theta}(x),a_k\}-\min\{F_\theta(x),a_k\}
\right|& \le \mathbb{E}\left|F_{n,\theta}(x)-F_{\theta}(x)\right|\\
& \le \min \left\{2F_{\theta}(x)\left(1-F_{\theta}(x)\right),\sqrt{\frac{F_{\theta}(x)\left(1-F_{\theta}(x)\right)}{n}}\right\}\\
& \le \min \left\{2T(\left|x\right|),\sqrt{\frac{T(\left|x\right|)}{n}}\right\},
\end{align*}
where $T(t)=\mathbb{P}_{X\sim\mu}\left(\left|\langle X,\theta\rangle\right|\ge t\right)$ for $t\ge0$.

Now, suppose that $2^jM^{1/(2p)}_{\theta}< \left|x\right|\le 2^{j+1}M^{1/(2p)}_{\theta}$ for some $j \ge k$. Since $\left|F^{-1}_{\theta}(u)\right| \le H_{\theta}(u)$ for all $u\in (0,1)$, we have
\begin{equation*}
\lambda\left\{u \in (0,1):\left|F^{-1}_{\theta}(u)\right|\ge \left|x\right|\right\} \le \lambda\left(L_j\right)=S_j.
\end{equation*}
On the other hand, using the fact that if $U\sim \text{Unif}(0,1)$ then $F^{-1}_{\theta}(U)$ has distribution $\Pi^{\theta}_{\#}\mu$, we obtain
\begin{equation*}
\lambda\left\{u \in (0,1):\left|F^{-1}_{\theta}(u)\right|\ge \left|x\right|\right\}=\mathbb{P}_{X \sim \mu}\left(\left|\langle X,\theta\rangle\right|\ge \left|x\right|\right)=T(\left|x\right|).
\end{equation*}
Consequently, 
\begin{equation*}
\min \left\{2T\left(\left|x\right|\right),\sqrt{\frac{T\left(\left|x\right|\right)}{n}}\right\} \le \min \left\{2S_j,\sqrt{\frac{S_j}{n}}\right\}\le  2\min \left\{S_j,\sqrt{\frac{S_j}{n}}\right\}=2m_j.
\end{equation*}
Moreover, the Lebesgue measure of the shell
\begin{equation*}
\left\{x \in \mathbb{R}:2^jM^{1/(2p)}_{\theta} < |x| \le 2^{j+1} M^{1/(2p)}_{\theta}\right\}
\end{equation*}
is equal to $2^{j+1}M_{\theta}^{1/(2p)}$. Hence,
\begin{align*}
&\int_{\left|x\right|>2^kM_\theta^{1/(2p)}}
\mathbb E\left|\min\left\{F_{n,\theta}(x),a_k\right\}-\min\left\{F_\theta(x),a_k\right\}\right|dx \\
&\quad =\sum_{j\ge k}\int_{2^jM_\theta^{1/(2p)}<\left|x\right|\le 2^{j+1}M_\theta^{1/(2p)}}\mathbb E\left|\min\left\{F_{n,\theta}(x),a_k\right\}-\min\left\{F_\theta(x),a_k\right\}
\right|dx \\
&\quad \lesssim M_\theta^{1/(2p)} \sum_{j\ge k}2^jm_j.
\end{align*}
Combining this estimate with the bound obtained on the region $\left|x\right|\le 2^kM^{1/(2p)}_{\theta}$ yields the desired estimate for the left endpoint part. Together with the analogous estimate for the right endpoint part, this completes the proof.
\end{proof}
We now return to the weighted integral. On $E_k$, we have $H^{p-1}_{\theta}(u)\le \left(2^{k+1}M^{1/(2p)}_{\theta}\right)^{p-1}$. Moreover, since $E_k\subset L_k$, the auxiliary lemma gives
\begin{equation*}
\mathbb{E}\int_{E_k}\left|F^{-1}_{n,\theta}(u)-F^{-1}_{\theta}(u)\right|H_{\theta}(u)^{p-1}du \lesssim_{p}M^{1/2}_{\theta}2^{k(p-1)}\sum_{j\ge k}2^jm_j.
\end{equation*}
On the other hand, since $p>1$, we have
\begin{equation*}
\sum_{k\ge 0}2^{k(p-1)}\sum_{j\ge k}2^jm_j=\sum_{j\ge 0}2^jm_j\sum_{k=0}^{j}2^{k(p-1)}\lesssim_{p}\sum_{j \ge 0}2^{jp}m_j.
\end{equation*}
Thus
\begin{equation*}
\sum_{k\ge 0}\mathbb{E}\int_{E_k}\left|F^{-1}_{n,\theta}(u)-F^{-1}_{\theta}(u)\right|H_{\theta}(u)^{p-1}du\lesssim_{p}M^{1/2}_{\theta}\sum_{j\ge 0}2^{jp}m_j.
\end{equation*}

Now, it remains to bound the last sum. By the definition of $S_j$ and Tonelli's theorem, we have
\begin{equation*}
\sum_{j\ge 0}2^{2pj}S_j = \int_{0}^{1}\sum_{j\ge 0}2^{2pj}\mathbbm{1}_{\left\{H_{\theta}(u)\ge 2^jM^{1/(2p)}_{\theta}\right\}}du.
\end{equation*}
For every $u \in (0,1)$, we have
\begin{equation*}
\sum_{j \ge 0}2^{2pj}\mathbbm{1}_{\left\{H_{\theta}(u)\ge 2^jM^{1/(2p)}_{\theta}\right\}} \lesssim_{p}M^{-1}_{\theta}H_{\theta}(u)^{2p}.
\end{equation*}
Therefore,
\begin{equation*}
\sum_{j\ge 0}2^{2pj}S_j \lesssim_{p}M^{-1}_{\theta} \int_0^1H_{\theta}(u)^{2p}du.
\end{equation*}
On the other hand, by the definition of $H_{\theta}$, we have
\begin{equation*}
\int_{0}^{1}H_{\theta}(u)^{2p}du \le \int_{0}^{1}\left|F^{-1}_{\theta}(u)\right|^{2p}du +\int_{0}^{1}\left|G^{-1}_{\theta}(u)\right|^{2p}du=M_{\theta}.
\end{equation*}
Hence, we deduce
\begin{equation*}
\sum_{j\ge 0}2^{2pj}S_j\lesssim_{p}1.
\end{equation*}
Let $J_n$ be a positive integer, to be chosen later. For $j \le J_n$, using the definition of $m_j$ and Cauchy--Schwarz inequality, we have
\begin{equation*}
\sum_{j=0}^{J_n}2^{jp}m_j \le \sum_{j=0}^{J_n}\sqrt{\frac{2^{2pj}S_j}{n}} \le \sqrt{\frac{J_n+1}{n}}\left(\sum_{j=0}^{J_n}2^{2pj}S_j\right)^{1/2}\lesssim_{p}\sqrt{\frac{J_n+1}{n}}.
\end{equation*}
On the other hand, for $j>J_n$, we have
\begin{equation*}
\sum_{j>J_n}2^{jp}m_j \le\sum_{j>J_n}2^{jp}S_j=\sum_{j>J_n}2^{-jp}2^{2jp}S_j  \le 2^{-pJ_n}\sum_{j >J_n}2^{2jp}S_j\lesssim_{p}2^{-pJ_n}.
\end{equation*}
Choose $J_n\coloneqq \left\lceil \frac{\log_2(n+1)}{2p}\right\rceil
$. By the definition of the ceiling function,
\begin{equation*}
J_n+1 \le \frac{\log(n+1)}{2p\log 2} +2 \lesssim_{p}\log(n+1),
\end{equation*}
where we use $\log(n+1) \ge \log 2$. Moreover,
\begin{equation*}
2^{-pJ_n} \le 2^{-1/2 \log_2(n+1)}=(n+1)^{-1/2}\lesssim\sqrt{\frac{\log(n+1)}{n}}.
\end{equation*}
Consequently,
\begin{equation*}
\sqrt{\frac{J_n+1}{n}} +2^{-pJ_n}\lesssim_{p}\sqrt{\frac{\log(n+1)}{n}}.
\end{equation*}
Hence,
\begin{equation*}
\sum_{k\ge 0}\mathbb{E}\int_{E_k}\left|F^{-1}_{n,\theta}(u)-F^{-1}_{\theta}(u)\right|H_{\theta}(u)^{p-1}du\lesssim_{p}M^{1/2}_{\theta}\sqrt{\frac{\log(n+1)}{n}}.
\end{equation*}
Combining this estimate with the estimate obtained in Step 1 completes the proof.
\end{proof}
\section{Omitted Proofs from Section~\ref{sec:prooflowerbound}}
\subsection{Proof of the two-point bound in Eq.~(\ref{eq:lecam})}
\label{appendix:proofofeqlecam}
Let $(\mu_0,\nu_0),(\mu_1,\nu_1)\in \mathcal{K}_{p,r}(b)\times \mathcal{K}_{p,r}(b)$, and define
$$ T_0\coloneqq \operatorname{SW}_p^p(\mu_0,\nu_0),
\qquad T_1\coloneqq \operatorname{SW}_p^p(\mu_1,\nu_1).
$$
For any estimator $\widehat T=\widehat T(X_1,\dots,X_n,Y_1,\dots,Y_m)$, we have
\begin{align*}
\sup_{\mu,\nu\in \mathcal{K}_{p,r}(b)}\mathbb{E}_{\mu,\nu}\left|\widehat{T}_{n,m}-\operatorname{SW}_p^p(\mu,\nu)\right|&\ge \max \left\{\mathbb{E}_{\mu_0,\nu_0}\left|\widehat{T}_{n,m}-T_0\right|,\mathbb{E}_{\mu_1,\nu_1}\left|\widehat{T}_{n,m}-T_1\right|\right\}\\&\ge \frac{1}{2}\mathbb{E}_{\mu_0,\nu_0}\left|\widehat{T}_{n,m}-T_0\right|+\frac{1}{2}\mathbb{E}_{\mu_1,\nu_1}\left|\widehat{T}_{n,m}-T_1\right|,
\end{align*}
where $\mathbb E_{\mu_i,\nu_i}$ denotes expectation with respect to the joint law $\mu_i^{\otimes n}\otimes \nu_i^{\otimes m}$ for $i=0,1$.\\
We define $\Psi(\widehat{T}_{n,m})$ by
\begin{equation*}
\Psi(\widehat{T}_{n,m})\coloneqq \begin{cases}
0, & \text{if } \left|\widehat{T}_{n,m}-T_0\right| \le \left|\widehat{T}_{n,m}-T_1\right|,\\
1, & \text{if } \left|\widehat{T}_{n,m}-T_0\right| > \left|\widehat{T}_{n,m}-T_1\right|.
\end{cases}
\end{equation*}
On the one hand, we have
\begin{equation*}
\left|\widehat{T}_{n,m}-T_0\right|\ge \left|\widehat{T}_{n,m}-T_0\right|\mathbbm{1}_{\{\Psi(\widehat{T}_{n,m})=1\}}.
\end{equation*}
Moreover, by the definition of $\Psi(\widehat{T}_{n,m})$, on the event $\{\Psi(\widehat{T}_{n,m})=1\}$, we have $\left|\widehat{T}_{n,m}-T_0\right|>\left|\widehat{T}_{n,m}-T_1\right|$. Therefore, by the triangle inequality,
\begin{equation*}
\left|T_1-T_0\right|\le \left|T_1-\widehat{T}_{n,m}\right|+\left|\widehat{T}_{n,m}-T_0\right|\le 2\left|\widehat{T}_{n,m}-T_0\right|.
\end{equation*}
Thus, on the event $\{\Psi(\widehat{T}_{n,m})=1\}$, we have $\left|\widehat{T}_{n,m}-T_0\right|\ge \left|T_1-T_0\right|/{2}$. Hence,
\begin{equation*}
\mathbb{E}_{\mu_0,\nu_0}\left|\widehat{T}_{n,m}-T_0\right|\ge \frac{\left|T_1-T_0\right|}{2}\mathbb{P}_{\mu_0,\nu_0}\left(\Psi(\widehat{T}_{n,m})=1\right).
\end{equation*}
Similarly, we can prove that
\begin{equation*}
\mathbb{E}_{\mu_1,\nu_1}\left|\widehat{T}_{n,m}-T_1\right|\ge \frac{\left|T_1-T_0\right|}{2}\mathbb{P}_{\mu_1,\nu_1}\left(\Psi(\widehat{T}_{n,m})=0\right).
\end{equation*}
Consequently, we have
\begin{align*}
\sup_{\mu,\nu\in \mathcal{K}_{p,r}(b)}\mathbb{E}_{\mu,\nu}\left|\widehat{T}_{n,m}-\operatorname{SW}_p^p(\mu,\nu)\right|&\ge \frac{\left|T_1-T_0\right|}{4}\left[\mathbb{P}_{\mu_0,\nu_0}\left(\Psi(\widehat{T}_{n,m})=1\right)+\mathbb{P}_{\mu_1,\nu_1}\left(\Psi(\widehat{T}_{n,m})=0\right)\right]\\
&= \frac{\left|T_1-T_0\right|}{4}\left[1-\mathbb{P}_{\mu_0,\nu_0}\left(\Psi(\widehat{T}_{n,m})=0\right)+\mathbb{P}_{\mu_1,\nu_1}\left(\Psi(\widehat{T}_{n,m})=0\right)\right]\\
&\ge \frac{\left|T_1-T_0\right|}{4}\left(1-\operatorname{TV}\left(\mu_0^{\otimes n}\otimes \nu_0^{\otimes m},\mu_1^{\otimes n}\otimes \nu_1^{\otimes m}\right)\right)\\
&\ge \frac{\left|T_1-T_0\right|}{4}\left[1-\sqrt{\frac{1}{2}\operatorname{KL}\left(\mu_0^{\otimes n}\otimes \nu_0^{\otimes m}\,\|\,\mu_1^{\otimes n}\otimes \nu_1^{\otimes m}\right)}\right]\\
& = \frac{\left|T_1-T_0\right|}{4}\left[1-\sqrt{\frac{n\operatorname{KL}(\mu_0\|\mu_1)+m\operatorname{KL}(\nu_0\|\nu_1)}{2}}\right],
\end{align*}
where the second inequality follows from the definition of total variation
(see \citet[Definition~2.4]{tsybakov2008introduction}), the third follows
from Pinsker's inequality \citep[Lemma~2.5]{tsybakov2008introduction}, and
the last equality follows from the additivity of the Kullback--Leibler
divergence over product measures.\\
Taking the infimum over all estimators $\widehat{T}_{n,m}$, we obtain
\begin{equation*}
\mathcal{R}_{n,m}^{\star}\big(\mathcal K_{p,r}(b)\big)\ge \frac{\left|T_1-T_0\right|}{4}\left(1-\sqrt{\frac{n\operatorname{KL}(\mu_0\|\mu_1)+m\operatorname{KL}(\nu_0\|\nu_1)}{2}}\right).
\end{equation*}
\subsection{Proofs for the critical plug-in lower bound}
\subsubsection{Proof of Lemma~\ref{lem:extendedlemma3.8ledoux}}\label{appendix:proofoflem:extendedlemma3.8ledoux}
\begin{proof}
Conditioning on $\widetilde N$ and using Jensen's inequality with the convex function $z\mapsto |z|$, we obtain
\begin{align*}
\mathbb E\left[ \left| \frac{\widetilde N}{n} - \frac{\widetilde M}{m}
\right| \middle| \widetilde N
\right] &\ge\left|\frac{\widetilde N}{n} -
\mathbb E\left[\frac{\widetilde M}{m}\middle|\widetilde N \right] \right|.
\end{align*}
Since $\widetilde N$ and $\widetilde M$ are independent, we have
$$ \mathbb E\left[\frac{\widetilde M}{m}\middle| \widetilde N
\right] = \mathbb E\left[\frac{\widetilde M}{m} \right]= s.
$$
Therefore,
$$
\mathbb E\left[ \left|\frac{\widetilde N}{n} - \frac{\widetilde M}{m} \right| \middle|\widetilde N\right] \ge \left| \frac{\widetilde N}{n}-s
\right|.
$$
Taking expectation with respect to $\widetilde N$, we get
$$\mathbb E\left| \frac{\widetilde N}{n}- \frac{\widetilde M}{m}
\right| \ge \mathbb E\left|\frac{\widetilde N}{n}-s \right|= \frac{1}{n}\mathbb E|\widetilde N-ns|.$$
By \citet[Lemma~3.8]{bobkov2019one}, there exists a universal constant $c_0>0$ such that
$$\mathbb E|\widetilde N-ns| \ge c_0 \min\{2ns(1-s),\sqrt{ns(1-s)}\}.
$$
Since $0<s\le1/2$, we have $1-s\ge1/2$. Moreover, since $ns\ge1$,
\begin{equation*}
2ns(1-s)\ge ns \ge \sqrt{ns},
\end{equation*}
while
\begin{equation*}
\sqrt{ns(1-s)} \ge \frac{1}{\sqrt{2}}\sqrt{ns}.
\end{equation*}
Therefore,
\begin{equation*}
\min \left\{2ns(1-s),\sqrt{ns(1-s)}\right\}\ge \frac{1}{\sqrt{2}}\sqrt{ns}.
\end{equation*}
Consequently,
$$\mathbb E\left| \frac{\widetilde N}{n}- \frac{\widetilde M}{m} \right|\ge \frac{c_0}{n\sqrt{2}}\sqrt{ns}=c\sqrt{\frac{s}{n}},
$$
for a universal constant $c=\frac{c_0}{\sqrt{2}}>0$.
\end{proof}
\subsubsection{Proof of Lemma~\ref{lem:onedimensionalcaser=2p}}\label{appendix:proofoflemmaonedimensionalcaser=2p}
As explained in the main text, the construction in
Lemma~\ref{lem:onedimensionalcaser=2p} relies on the following useful observation.
\begin{lemma}\label{lem:technicallemmaconstructionr=2p}
Let $x_0<x_1<\dots <x_L$ and let $\widehat P_n$ and $\widehat Q_m$ be probability measures supported on the grid $\{x_0,x_1,\ldots,x_L\}$. For $k=1,\dots,L$, denote $A_k=\left[x_k,+\infty\right)$, then for every $p \ge 1$, we have
\begin{equation*}
\operatorname{W}^p_p(\widehat{P}_n,\widehat{Q}_m)\ge \sum_{k=1}^{L}\left(x_k-x_{k-1}\right)^p\left|\widehat{P}_n\left(A_k\right)-\widehat{Q}_m\left(A_k\right)\right|.
\end{equation*}
\end{lemma}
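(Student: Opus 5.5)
The plan is to pass to quantile functions and then use a layer-cake decomposition of $|x-y|^p$ along the grid. Both measures are supported on the same finite grid, so the one-dimensional quantile representation gives
\begin{equation*}
\operatorname{W}_p^p(\widehat P_n,\widehat Q_m)=\int_0^1\left|F^{-1}(u)-G^{-1}(u)\right|^p du,
\end{equation*}
where $F^{-1}$ and $G^{-1}$ are the quantile functions of $\widehat P_n$ and $\widehat Q_m$. Both quantile functions take values in $\{x_0,\ldots,x_L\}$.

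\textbf{Step 1: a pointwise inequality.} For two grid points $x_i\le x_j$, I claim
\begin{equation*}
|x_j-x_i|^p=\Bigl(\sum_{k=i+1}^{j}(x_k-x_{k-1})\Bigr)^p\ge\sum_{k=i+1}^{j}(x_k-x_{k-1})^p .
\end{equation*}
This is superadditivity of $t\mapsto t^p$ on $[0,\infty)$ for $p\ge1$. The index set $\{i+1,\ldots,j\}$ consists exactly of those $k$ for which exactly one of the two points lies in $A_k=[x_k,\infty)$. Writing $y=F^{-1}(u)$ and $z=G^{-1}(u)$, this gives
\begin{equation*}
|y-z|^p\ge\sum_{k=1}^L(x_k-x_{k-1})^p\left|\mathbbm 1_{\{y\ge x_k\}}-\mathbbm 1_{\{z\ge x_k\}}\right| .
\end{equation*}

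\textbf{Step 2: integrate in $u$.} I integrate the pointwise inequality over $u\in(0,1)$ and exchange the finite sum with the integral. It then remains to show, for each $k$,
\begin{equation*}
\int_0^1\left|\mathbbm 1_{\{F^{-1}(u)\ge x_k\}}-\mathbbm 1_{\{G^{-1}(u)\ge x_k\}}\right|du=\left|\widehat P_n(A_k)-\widehat Q_m(A_k)\right| .
\end{equation*}
The equivalence $F^{-1}(u)\le x\Leftrightarrow u\le F(x)$ (cited earlier from \citet[Lemma~A.3]{bobkov2019one}) gives the right form. Since no mass lies strictly between $x_{k-1}$ and $x_k$, the event $\{F^{-1}(u)\ge x_k\}$ coincides with $\{F^{-1}(u)>x_{k-1}\}=\{u>F(x_{k-1})\}$. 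So $u\mapsto\mathbbm 1_{\{F^{-1}(u)\ge x_k\}}$ is the indicator of an interval $(F(x_{k-1}),1)$ of length $\widehat P_n(A_k)$. The same holds for $G$. Two nested right-end intervals differ by a set of measure $|\widehat P_n(A_k)-\widehat Q_m(A_k)|$, which proves the displayed identity.

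The main obstacle is only this identification of the level sets of the quantile functions. One has to be careful with the left-continuity conventions: it must be checked that $\{F^{-1}\ge x_k\}$ equals $\{u>F(x_{k-1})\}$ up to a null set. This uses that the support is contained in the grid. Everything else is superadditivity plus Tonelli's theorem.
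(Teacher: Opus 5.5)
Your proof is correct, but it takes a different route from the paper's.

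\textbf{The paper's route.} It never uses the quantile representation. It fixes an arbitrary coupling $\pi=(\pi_{ij})$ on the grid and applies the same superadditivity bound
$$|x_i-x_j|^p\ge\sum_{k}(x_k-x_{k-1})^p\mathbbm{1}_{\{i<k\le j\ \text{or}\ j<k\le i\}}$$
to every pair $(i,j)$. It then observes that the total mass crossing the $k$-th cut in either direction, $\sum_{i\ge k,j<k}\pi_{ij}+\sum_{i<k,j\ge k}\pi_{ij}$, is at least the net flow $|\widehat P_n(A_k)-\widehat Q_m(A_k)|$. Taking the infimum over $\pi$ finishes the proof.

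\textbf{Your route.} You work only with the comonotone coupling, whose optimality for $p\ge1$ is exactly the content of the quantile formula. For that coupling the crossing mass equals $|\widehat P_n(A_k)-\widehat Q_m(A_k)|$ exactly. This is because $\{F^{-1}\ge x_k\}=(F(x_{k-1}),1)$ and $\{G^{-1}\ge x_k\}=(G(x_{k-1}),1)$ are nested right-end intervals.

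\textbf{What each buys.} The paper's version is purely combinatorial. It lower-bounds the cost of every transport plan, so it does not rely on the one-dimensional optimality of the monotone map. Yours is shorter given the quantile representation, which the paper already uses elsewhere. It also makes clear that the only loss in the inequality comes from superadditivity of $t\mapsto t^p$.

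Incidentally, your level-set identity holds for every $u\in(0,1)$, not merely up to a null set. Since $F^{-1}$ takes values in the grid, $F^{-1}(u)\ge x_k\iff F^{-1}(u)>x_{k-1}\iff u>F(x_{k-1})$.
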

\begin{proof}
For each $i=0,\dots ,L$, denote
\begin{equation*}
\phi_i = \widehat{P}_n\left(\left\{x_i\right\}\right), \quad \psi_i = \widehat{Q}_m\left(\left\{x_i\right\}\right).
\end{equation*}
Let $\pi$ be an arbitrary transport plan. Since both measures on the finite grid, $\pi$ can be represented by a nonnegative matrix 
\begin{equation*}
\pi = \left(\pi_{ij}\right)_{0 \le i,j\le L},
\end{equation*}
where $\pi_{ij}$ is the amount of mass transported from $x_i$ to $x_j$. Besides, the marginal conditions are
\begin{equation*}
\sum_{j=0}^{L} \pi_{ij}= \phi _i, \quad i=0,\dots,L.
\end{equation*}
and
\begin{equation*}
\sum_{i=0}^{L} \pi_{ij} = \psi_j,\quad j=0,\dots,L.
\end{equation*}
The transportation cost of $\pi$ is 
\begin{equation*}
C^*_p(\pi)=\sum_{i,j=0}^{L} \left|x_i-x_j\right|^p\pi_{ij}.
\end{equation*}
For each $k=1,\dots,L$, we regard the interval between $x_{k-1}$ and $x_k$ as the $k$-th cut, meaning
\begin{equation*}
\left\{x_0,\dots,x_{k-1}\right\} \big| \left\{x_k,\dots,x_L\right\}.
\end{equation*}
For fixed $i,j$, we define
\begin{equation*}
\mathbb{I}^{(k)}_{ij} = \mathbbm{1}_{\{i<k\le j \text{ or } j < k \le i\}}.
\end{equation*}
We then have
\begin{equation*}
\left|x_i-x_j\right|^p \ge \sum_{k=1}^{L} \left|x_k-x_{k-1}\right|^p\mathbb{I}^{(k)}_{ij}.
\end{equation*}
Indeed, suppose first that $i <j$. Then, we have
\begin{equation*}
x_j-x_i =\sum_{k=i+1}^{j}\left(x_k-x_{k-1}\right).
\end{equation*}
Since $p \ge 1$, and $\left(x_k-x_{k-1}\right)>0$, we have
\begin{equation*}
\left[\sum_{k=i+1}^{j}\left(x_k-x_{k-1}\right)\right]^p \ge \sum_{k=i+1}^j\left(x_k-x_{k-1}\right)^p.
\end{equation*}
Hence, 
\begin{equation*}
\left|x_i-x_j\right|^p\ge \sum_{k=i+1}^j\left(x_k-x_{k-1}\right)^p=\sum_{k=1}^L\left(x_k-x_{k-1}\right)^p\mathbb{I}^{(k)}_{ij}.
\end{equation*}
The case $i>j$ follows by the same argument, whereas if $i=j$, both sides are zero.\\
Then, multiplying by $\pi_{ij}\ge 0$, we obtain, for every fixed pair $(i,j)$,
\begin{equation*}
\left|x_i-x_j\right|^p \pi_{ij}\ge \sum_{k=1}^L \left(x_k-x_{k-1}\right)^p\mathbb{I}^{(k)}_{ij}\pi_{ij}.
\end{equation*}
Summing over all $i,j$, we get
\begin{equation*}
\sum_{i,j=0}^L\left|x_i-x_j\right|^p\pi_{ij} \ge \sum_{i,j=0}^L\sum_{k=1}^L \left(x_k-x_{k-1}\right)^p\mathbb{I}^{(k)}_{ij}\pi_{ij}=\sum_{k=1}^{L}\left(x_k-x_{k-1}\right)^p\sum_{i,j=0}^{L}\mathbb{I}^{(k)}_{ij}\pi_{ij}.
\end{equation*}
By the definition, for each fixed $k$, we  have
\begin{equation*}
\sum_{i,j=0}^L\mathbb{I}^{(k)}_{ij}\pi_{ij} = \sum_{i<k \le j} \pi_{ij} + \sum_{j <k \le i} \pi_{ij}.
\end{equation*}
Therefore,
\begin{equation*}
C^*_p(\pi) \ge \sum_{k=1}^L \left(x_k-x_{k-1}\right)^p\left(\sum_{i<k \le j} \pi_{ij} + \sum_{j <k \le i} \pi_{ij}\right).
\end{equation*}
On the other hand, since both measures are supported on $\{x_0,\dots,x_L\}$, then
\begin{equation*}
\widehat{P}_n\left(A_k\right)=\sum_{i=k}^{L}\phi_i=\sum_{i\ge k,j\ge k}\pi_{ij}+\sum_{i\ge k,j<k}\pi_{ij},
\end{equation*}
and 
\begin{equation*}
\widehat{Q}_m\left(A_k\right)=\sum_{i=k}^{L}\psi_i=\sum_{i\ge k,j\ge k}\pi_{ij}+\sum_{i< k,j\ge k}\pi_{ij}.
\end{equation*}
Consequently,
\begin{equation*}
\widehat{P}_n\left(A_k\right)-\widehat{Q}_m\left(A_k\right)= \sum_{i\ge k,j<k}\pi_{ij} - \sum_{i<k,j\ge k}\pi_{ij}.
\end{equation*}
Therefore,
\begin{equation*}
\left|\widehat{P}_n\left(A_k\right)-\widehat{Q}_m\left(A_k\right)\right| \le \sum_{i \ge k,j<k}\pi_{ij} + \sum_{i<k,j\ge k}\pi_{ij}.
\end{equation*}
Hence,
\begin{equation*}
C^*_p(\pi)\ge \sum_{k=1}^{L} \left(x_k-x_{k-1}\right)^p \left|\widehat{P}_n\left(A_k\right)-\widehat{Q}_m\left(A_k\right)\right|.
\end{equation*}
Since this inequality holds for every transport plan, taking the infimum over all such $\pi$ completes the proof.
\end{proof}
% We are now ready to prove Lemma~\ref{lem:onedimensionalcaser=2p}.

\begin{proof}[Proof of Lemma~\ref{lem:onedimensionalcaser=2p}]
Set $N_p\coloneqq\max\left\{2^{8p},1024\right\}.$ We divide the proof into two cases. We first consider $n\ge N_p$ and use
the multiscale construction described in the main text. The remaining case
$1\le n<N_p$ is treated by a one-scale construction.

\textbf{Step 1: Multiscale construction for $n\ge N_p$.}
Assume that $n\ge N_p$ and set
\begin{equation*}
L\coloneqq\left\lfloor\frac{1}{4p}\log_2 n\right\rfloor.
\end{equation*}
Since $\frac{1}{4p}\log_2 n\ge2,$ we have
\begin{equation*}
L\ge\frac{1}{8p}\log_2 n.
\end{equation*}

We choose the support points
\begin{equation*}
x_0=0,\qquad x_k=R2^k,\quad k=1,\ldots,L,
\end{equation*}
where $R>0$ will be specified below. Set $S_0=1$, and for
$k=1,\ldots,L$, define
\begin{equation*}
S_k\coloneqq\frac{B^2}{Lx_k^{2p}}.
\end{equation*}
We choose
\begin{equation*}
R\coloneqq\left( \frac{B^2n}{16L2^{2pL}}
\right)^{1/(2p)}.
\end{equation*}
With this choice,
\begin{equation*}
S_L=\frac{B^2}{L(R2^L)^{2p}}=\frac{16}{n}.
\end{equation*}
Finally, set $S_{L+1}=0$ and define
\begin{equation*}
q_k\coloneqq S_k-S_{k+1}, \qquad
k=0,\ldots,L.
\end{equation*}
We then define
\begin{equation*}
P\coloneqq\sum_{k=0}^Lq_k\delta_{x_k}.
\end{equation*}

\textbf{Step 2: Verification of the construction.}
For every $k=1,\ldots,L-1$, since $x_{k+1}=2x_k$, we have
\begin{equation*}
S_{k+1}=\frac{B^2}{Lx_{k+1}^{2p}}=2^{-2p}S_k.
\end{equation*}
Since $2^{-2p}\in(0,1)$,
\begin{equation*}
0<S_L<\cdots<S_1.
\end{equation*}
Moreover,
\begin{equation*}
S_1=S_L2^{2p(L-1)}\le \frac{16}{n}2^{2pL}.
\end{equation*}
By the definition of $L$,
\begin{equation*}
L\le\frac{1}{4p}\log_2 n,
\end{equation*}
and therefore
\begin{equation*}
2^{2pL} \le 2^{\frac12\log_2 n}=\sqrt n.
\end{equation*}
It follows that
\begin{equation*}
S_1\le\frac{16}{\sqrt n}\le\frac{1}{2},
\end{equation*}
where the last inequality follows from $n\ge1024$. Hence,
\begin{equation*}
0<S_L<\cdots<S_1\le\frac12.
\end{equation*}

Since $S_0=1>S_1$ and $S_{L+1}=0$, the sequence
$(S_k)_{k=0}^{L+1}$ is nonincreasing. Thus,
\begin{equation*}
q_k=S_k-S_{k+1}\ge0,\qquad k=0,\ldots,L,
\end{equation*}
and
\begin{equation*}
\sum_{k=0}^Lq_k=\sum_{k=0}^L(S_k-S_{k+1})=S_0-S_{L+1}=1.
\end{equation*}
Therefore, $P$ is a probability measure. Moreover, for every $k=1,\ldots,L$,
\begin{equation*}
P([x_k,+\infty))=\sum_{j=k}^Lq_j=\sum_{j=k}^L(S_j-S_{j+1})=S_k.
\end{equation*}

We also have, for every $k=1,\ldots,L$,
\begin{equation*}
nS_k\ge nS_L=16\ge1.
\end{equation*}
Together with $0<S_k\le1/2$, this shows that all the conditions required
to apply Lemma~\ref{lem:extendedlemma3.8ledoux} with $s=S_k$ are satisfied.

It remains to verify the moment constraint. By the definition of $P$,
\begin{equation*}
\int_{\mathbb R}|x|^{2p}d(P(x))=\sum_{k=0}^Lx_k^{2p}q_k=\sum_{k=0}^Lx_k^{2p}(S_k-S_{k+1})\le\sum_{k=0}^Lx_k^{2p}S_k=\sum_{k=1}^Lx_k^{2p}S_k,
\end{equation*}
where the last equality follows from $x_0=0$. Since
\begin{equation*}
x_k^{2p}S_k=\frac{B^2}{L},\qquad k=1,\ldots,L,
\end{equation*}
we obtain
\begin{equation*}
\int_{\mathbb R}|x|^{2p}d\left(P(x)\right)\le\sum_{k=1}^L\frac{B^2}{L}= B^2.
\end{equation*}
Hence,
\begin{equation*}
\left(\int_{\mathbb R}|x|^{2p}d\left(P(x)\right)\right)^{1/2}\le B.
\end{equation*}

\textbf{Step 3: Lower bound for
$\mathbb E\operatorname{W}_p^p(\widehat P_n,\widehat Q_m)$.}
For $k=1,\ldots,L$, let
\begin{equation*}
A_k=[x_k,+\infty).
\end{equation*}
Since $\widehat P_n$ and $\widehat Q_m$ are supported on
$\{x_0,x_1,\ldots,x_L\}$, Lemma~\ref{lem:technicallemmaconstructionr=2p} gives
\begin{equation*}
\operatorname{W}_p^p(\widehat P_n,\widehat Q_m)\ge\sum_{k=1}^L(x_k-x_{k-1})^p
\left|\widehat P_n(A_k)-\widehat Q_m(A_k)\right|.
\end{equation*}
Taking expectations yields
\begin{equation*}
\mathbb E\operatorname{W}_p^p(\widehat P_n,\widehat Q_m)\ge\sum_{k=1}^L(x_k-x_{k-1})^p\mathbb E\left|\widehat P_n(A_k)-\widehat Q_m(A_k)\right|.
\end{equation*}

For each $k=1,\ldots,L$, define
\begin{equation*}
N_k\coloneqq n\widehat P_n(A_k), \qquad M_k\coloneqq m\widehat Q_m(A_k).
\end{equation*}
Since $P(A_k)=S_k$, we have
\begin{equation*}
N_k\sim\operatorname{Binomial}(n,S_k),\qquad M_k\sim\operatorname{Binomial}(m,S_k),
\end{equation*}
and $N_k$ and $M_k$ are independent. Therefore, by
Lemma~\ref{lem:extendedlemma3.8ledoux},
\begin{equation*}
\mathbb E\left|\widehat P_n(A_k)-\widehat Q_m(A_k)\right|=\mathbb E\left|\frac{N_k}{n}-\frac{M_k}{m}\right|\ge c\sqrt{\frac{S_k}{n}},
\end{equation*}
where $c>0$ is a universal constant. Consequently,
\begin{equation*}
\mathbb E\operatorname{W}_p^p(\widehat P_n,\widehat Q_m)\ge c\sum_{k=1}^L
(x_k-x_{k-1})^p\sqrt{\frac{S_k}{n}}.
\end{equation*}

For every $k=1,\ldots,L$, the construction gives
\begin{equation*}
x_k-x_{k-1}\ge\frac{x_k}{2}.
\end{equation*}
Indeed, this is immediate for $k=1$ since $x_0=0$, while for
$k\ge2$ we have $x_{k-1}=x_k/2$. Therefore,
\begin{equation*}
(x_k-x_{k-1})^p\sqrt{\frac{S_k}{n}}\ge2^{-p}x_k^p\sqrt{\frac{B^2}{nLx_k^{2p}}}=2^{-p}\frac{B}{\sqrt{Ln}}.
\end{equation*}
Summing over $k=1,\ldots,L$, we obtain
\begin{equation*}\mathbb E\operatorname{W}_p^p(\widehat P_n,\widehat Q_m)
\gtrsim_pB\sqrt{\frac{L}{n}},
\end{equation*}

For $n\ge2$,
\begin{equation*}\log_2n=\frac{\log n}{\log2}\ge \frac{\log(n+1)}{2\log2}.
\end{equation*}
Combining this with
\begin{equation*}L\ge\frac{1}{8p}\log_2n,
\end{equation*}
we conclude that
\begin{equation*}
\mathbb E\operatorname{W}_p^p(\widehat P_n,\widehat Q_m)\gtrsim_p B \sqrt{\frac{\log(n+1)}{n}}
\end{equation*}
for every $n\ge N_p$.

\textbf{Step 4: The remaining values of $n$.}
We now consider $1\le n<N_p$. Let
\begin{equation*}
P\coloneqq \frac{1}{2}\delta_0+\frac12\delta_a,
\end{equation*}
where $a>0$ is chosen such that
\begin{equation*}
\frac{1}{2}a^{2p}=B^2.
\end{equation*}
Then
\begin{equation*}
\int_{\mathbb R}|x|^{2p}d\left(P(x)\right)=\frac12a^{2p}= B^2,
\end{equation*}
so the moment constraint is satisfied.

Let $\widehat P_n$ and $\widehat Q_m$ be the empirical measures associated
with two independent samples from $P$, and define
\begin{equation*}
N\coloneqq n\widehat P_n(\{a\}),\qquad M\coloneqq m\widehat Q_m(\{a\}).
\end{equation*}
Then $N$ and $M$ are independent and
\begin{equation*}
N\sim\operatorname{Binomial}\left(n,\frac12\right),\qquad
M\sim\operatorname{Binomial}\left(m,\frac12\right).
\end{equation*}
Moreover,
\begin{equation*}
\widehat P_n=\left(1-\frac{N}{n}\right)\delta_0+
\frac{N}{n}\delta_a,\qquad\widehat Q_m=\left(1-\frac{M}{m}\right)\delta_0+\frac{M}{m}\delta_a,
\end{equation*}
and hence
\begin{equation*}
\operatorname{W}_p^p(\widehat P_n,\widehat Q_m)=a^p\left|\frac{N}{n}-\frac{M}{m}\right|.
\end{equation*}

Assume first that $2\le n<N_p$. Applying
Lemma~\ref{lem:extendedlemma3.8ledoux} with $s=1/2$, we obtain
\begin{equation*}
\mathbb E\left|\frac{N}{n}-\frac{M}{m}\right|\ge c\sqrt{\frac{1}{2n}}.
\end{equation*}
Since $a^p=\sqrt{2}B$, it follows that
\begin{equation*}
\mathbb E\operatorname{W}_p^p(\widehat P_n,\widehat Q_m)\ge cBn^{-1/2}.
\end{equation*}
Since $n<N_p$,
\begin{equation*}
\log(n+1)\le\log(N_p+1),
\end{equation*}
and therefore
\begin{equation*}
n^{-1/2}\ge\frac{1}{\sqrt{\log(N_p+1)}}\sqrt{\frac{\log(n+1)}{n}}.
\end{equation*}
Since $N_p$ depends only on $p$, after adjusting the constant $c_p$,
\begin{equation*}
\mathbb E\operatorname{W}_p^p(\widehat P_n,\widehat Q_m)
\gtrsim_p B\sqrt{\frac{\log(n+1)}{n}}
\end{equation*}
for every $2\le n<N_p$.

It remains to consider $n=1$. In this case, $N$ is a Bernoulli random
variable with parameter $1/2$. Conditioning on $N$ and applying Jensen's
inequality,
\begin{align*}
\mathbb E\left|N-\frac{M}{m}\right|=\mathbb E\left[\mathbb E\left(\left|N-\frac{M}{m}\right|\middle|N\right)\right]\ge\mathbb E\left|
N-\mathbb E\left[\frac{M}{m}\middle|N\right]\right|.
\end{align*}
Since $N$ and $M$ are independent,
\begin{equation*}
\mathbb E\left[\frac{M}{m}\middle|N\right]=\frac{1}{2}.
\end{equation*}
Thus,
\begin{equation*}
\mathbb E\left|N-\frac{M}{m}\right|\ge\mathbb E\left|N-\frac12\right|= \frac{1}{2}.
\end{equation*}
Consequently,
\begin{equation*}
\mathbb E\operatorname{W}_p^p(\widehat P_1,\widehat Q_m)\ge\frac{a^p}{2}=\frac{B}{\sqrt2}.
\end{equation*}
Hence,
\begin{equation*}
\mathbb E\operatorname{W}_p^p(\widehat P_1,\widehat Q_m)\gtrsim_pB\sqrt{\log2}\gtrsim_pB\sqrt{\frac{\log(1+1)}{1}}.
\end{equation*}

Combining the two cases we obtain
\begin{equation*}
\mathbb E\operatorname{W}_p^p(\widehat P_n,\widehat Q_m)\gtrsim_pB\sqrt{\frac{\log(n+1)}{n}}
\end{equation*}
for every $1\le n\le m$. This completes the proof.
\end{proof}
\subsection{Proofs for the critical minimax lower bound}
\subsubsection{Proof of Lemma~\ref{lem:twopointmixtureversion}}\label{appendix:proofoflem:twopointmixtureversion}
\begin{proof}
Let $\widehat T = \widehat T(X_1,\ldots,X_n,Y_1,\ldots,Y_m)$ be an arbitrary estimator. 

Since $(\mu_0,\nu_0)$ and
$(\mu_\alpha,\nu_0)$ belong to $\mathcal K_{p,2p}(b)\times\mathcal K_{p,2p}(b)$ for every
$\alpha\in\mathcal A$, we have
\begin{align*}
\sup_{\mu,\nu\in\mathcal K_{p,2p}(b)}\mathbb E_{\mu,\nu} \left|
\widehat T-\operatorname{SW}_p^p(\mu,\nu) \right|
&\ge \max\left\{ \mathbb E_{\mu_0,\nu_0}|\widehat T|,
\frac{1}{|\mathcal A|} \sum_{\alpha\in\mathcal A}
\mathbb E_{\mu_\alpha,\nu_0}
\left| \widehat T-\operatorname{SW}_p^p(\mu_\alpha,\nu_0)\right|\right\}
\\
&= \max\left\{
\mathbb E_{\mathsf P_0\otimes\nu_0^{\otimes m}}
|\widehat T|, \mathbb E_{\mathsf P_1\otimes\nu_0^{\otimes m}}
\left| \widehat T-\operatorname{SW}_p^p(\mu_{\alpha_\star},\nu_0) \right|
\right\},
\end{align*}
where the equality follows from the fact that
$\operatorname{SW}_p^p(\mu_\alpha,\nu_0)$ has the same value for every
$\alpha\in\mathcal A$.

Applying the same two-point argument as in
Appendix~\ref{appendix:proofofeqlecam}, we obtain
\begin{align*}
\sup_{\mu,\nu\in\mathcal K_{p,2p}(b)} \mathbb E_{\mu,\nu} \left| \widehat T -\operatorname{SW}_p^p(\mu,\nu)\right|
&\ge  \frac{\operatorname{SW}_p^p(\mu_{\alpha_\star},\nu_0)}{4} \left[ 1- \operatorname{TV}\left( \mathsf P_0\otimes\nu_0^{\otimes m}, \mathsf P_1\otimes\nu_0^{\otimes m} \right) \right]
\\
&=
\frac{\operatorname{SW}_p^p(\mu_{\alpha_\star},\nu_0)}{4} \left[
1-\operatorname{TV}(\mathsf P_0,\mathsf P_1) \right].
\end{align*}
Since the inequality holds for every estimator $\widehat T$, taking the
infimum over all estimators completes the proof.
\end{proof}
\subsubsection{Proof of Lemma~\ref{lem:rademacher-block-construction}}\label{appendix:proofoflem:rademacher-block-construction}
\begin{proof}
We first verify that $\mu_0$ and $\mu_{\varepsilon}$ are probability measures. Since $0 \le \gamma_k \le \frac{\omega_k}{2}$ for all $k=1,\dots,L$, all the coefficients appearing in the definition of $\mu_0$ and $\mu_{\varepsilon}$ are nonnegative. Moreover, we also have
\begin{equation*}
\mu_0 \left(\mathbb{R}^d\right)=1-\Omega + \sum_{k=1}^L \omega_k=1,
\end{equation*}
and, for every $\varepsilon \in \{-1,1\}^L$,
\begin{equation*}
\mu_{\varepsilon}\left(\mathbb{R}^d\right)=1-\Omega+\sum_{k=1}^L\left(\frac{\omega_k}{2}+\varepsilon_k\gamma_k\right)+\sum_{k=1}^{L}\left(\frac{\omega_k}{2}-\varepsilon_k\gamma_k\right)=1.
\end{equation*}
Hence $\mu_0$ and $\mu_{\varepsilon}$ are probability measures.

We next compute their powered Sliced Wasserstein functional. For each $k$, both $\mu_0$ and $\mu_{\varepsilon}$ assign the same total mass $\omega_k$ to the pair $\left\{a_ke_1,b_ke_1\right\}$. Since these pairs are disjoint and ordered along the $e_1$-axis, the one-dimensional optimal coupling matches the mass in each pair with the mass in the same pair, so that no mass is transported between different pairs. Therefore, for any $\theta \in \mathbb{S}^{d-1}$,
\begin{equation*}
\operatorname{W}^p_p\left(\Pi^{\theta}_{\#}\mu_{\varepsilon},\Pi^{\theta}_{\#}\mu_{0}\right)=\left|\langle e_1,\theta\rangle \right|^p\sum_{k=1}^{L}\gamma_k\left(b_k-a_k\right)^p.
\end{equation*}
Therefore,
\begin{equation*}
\operatorname{SW}^p_p\left(\mu_{\varepsilon},\mu_0\right)=\left(\int_{\mathbb{S}^{d-1}}\left|\langle e_1,\theta\rangle\right|^pd\sigma(\theta)\right)\sum_{k=1}^{L}\gamma_k\left(b_k-a_k\right)^p=\kappa_{p,d}\sum_{k=1}^{L}\gamma_k\left(b_k-a_k\right)^p.
\end{equation*}

Now, define the likelihood ratio
\begin{equation*}
L_n \coloneqq\frac{d\mathsf P_1} {d\mathsf P_0}.
\end{equation*}
Applying Scheffé's theorem (see, for example \citet[Lemma~2.1]{tsybakov2008introduction}),
and Cauchy-Schwarz, we have
\begin{equation*}
\operatorname{TV}\left(\mathsf P_0,\mathsf P_1\right)=\frac{1}{2} \mathbb{E}_{\mathsf P_0}\left|L_n-1\right| \le \frac{1}{2}\left[\mathbb{E}_{\mathsf P_0}\left(L_n-1\right)^2\right]^{1/2}=\frac{1}{2}\left[\mathbb{E}_{\mathsf P_0}L^2_n-1\right]^{1/2}.
\end{equation*}
Therefore, it remains to derive an upper bound for the second moment $\mathbb E_{\mathsf P_0}L_n^2$. 

Let $X_1,\ldots,X_n$ be independent random variables with common distribution $\mu_0$. For each $\varepsilon\in\{-1,1\}^L$, define the one-sample likelihood ratio
\begin{equation*}
\ell _{\varepsilon}\coloneqq \frac{d\mu_{\varepsilon}}{d\mu_0}.
\end{equation*}
Since $\mathsf P_1$ is the uniform mixture of the product measures $\mu_\varepsilon^{\otimes n}$, we have
\begin{equation*}
L_n\left(X_1,\dots,X_n\right)=2^{-L}\sum_{\varepsilon \in \{-1,1\}^L}\prod_{j=1}^n\ell_{\varepsilon}\left(X_j\right)=2^{-L}\sum_{\varepsilon \in \{-1,1\}^L}\prod_{j=1}^n\frac{\mu_{\varepsilon}\left(\{X_j\}\right)}{\mu_0\left(\{X_j\}\right)}.
\end{equation*}
Hence,
\begin{equation*}
L^2_n\left(X_1,\dots,X_n\right)=2^{-2L}\sum_{\varepsilon,\varepsilon'\in \{-1,1\}^L}\prod_{j=1}^n \frac{\mu_{\varepsilon}\left(\{X_j\}\right)\mu_{\varepsilon'}\left(\{X_j\}\right)}{\mu_0\left(\{X_j\}\right)^2}.
\end{equation*}
Since $X_1,\ldots,X_n$ are i.i.d. with common distribution $\mu_0$, taking expectations under $\mathsf P_0=\mu_0^{\otimes n}$ yields
\begin{align*}
\mathbb{E}_{\mathsf P_0}[L^2_n] &= 2^{-2L}\sum_{\varepsilon,\varepsilon'\in \{-1,1\}^L}\prod_{j=1}^{n}\mathbb{E}_{X_j\sim \mu_0}\left(\frac{\mu_{\varepsilon}\left(\{X_j\}\right)\mu_{\varepsilon'}\left(\{X_j\}\right)}{\mu_0\left(\{X_j\}\right)^2}\right)\\
&= 2^{-2L}\sum_{\varepsilon,\varepsilon' \in \{-1,1\}^L}\left[\mathbb{E}_{X\sim \mu_0}\left(\frac{\mu_{\varepsilon}\left(\{X\}\right)\mu_{\varepsilon'}\left(\{X\}\right)}{\mu_0\left(\{X\}\right)^2}\right)\right]^n\\
&=2^{-2L}\sum_{\varepsilon,\varepsilon' \in \{-1,1\}^L}\left[\sum_{z \in \text{supp} \left(\mu_0\right)}\frac{\mu_{\varepsilon}\left(\{z\}\right)\mu_{\varepsilon'}\left(\{z\}\right)}{\mu_0\left(\{z\}\right)}\right]^n.
\end{align*}
By construction, we deduce
\begin{align*}
\sum_{z\in\operatorname{supp}(\mu_0)} \frac{ \mu_\varepsilon(\{z\}) \mu_{\varepsilon'}(\{z\})
}{\mu_0(\{z\}) }
&=1-\Omega\\
&\quad + \sum_{k=1}^L
\frac{ \left(\frac{\omega_k}{2}+\varepsilon_k\gamma_k\right) \left(\frac{\omega_k}{2}+\varepsilon_k'\gamma_k\right)}{\omega_k/2}\\
&\quad+
\sum_{k=1}^L \frac{ \left(\frac{\omega_k}{2} -\varepsilon_k\gamma_k\right)\left(\frac{\omega_k}{2}-\varepsilon_k'\gamma_k\right)
}{\omega_k/2}\\
&=1-\Omega+\sum_{k=1}^L\left(
\omega_k+4\varepsilon_k\varepsilon_k'\frac{\gamma_k^2}{\omega_k}\right)
\\
&=1+4\sum_{k=1}^L\varepsilon_k\varepsilon_k'\frac{\gamma_k^2}{\omega_k}.
\end{align*}
Therefore,
\begin{equation*}
\mathbb{E}_{\mathsf{P}_0}[L^2_n]=2^{-2L}\sum_{\varepsilon,\varepsilon'\in\{-1,1\}^L}\left(1+4\sum_{k=1}^L\varepsilon_k\varepsilon'_k\frac{\gamma^2_k}{\omega_k}\right)^n
\end{equation*}
Adapting the second-moment argument of \citet[Appendix~H]{kim2022minimax} to our construction, we obtain the following bound. Let $\varepsilon$ and $\varepsilon'$ be independent and uniformly distributed over
$\{-1,1\}^L$. Then
\begin{align*}
\mathbb{E}_{\mathsf P_0}[L_n^2]
&=\mathbb{E}_{\varepsilon,\varepsilon'}\left[ \left(1+4\sum_{k=1}^L \varepsilon_k\varepsilon_k'\frac{\gamma_k^2}{\omega_k}\right)^n\right]
\\&\le\mathbb{E}_{\varepsilon,\varepsilon'}\left[\exp\left(
4n\sum_{k=1}^L\varepsilon_k\varepsilon_k'\frac{\gamma_k^2}{\omega_k}\right)\right]\\
&=\prod_{k=1}^L\mathbb{E}_{\varepsilon_k,\varepsilon_k'}\left[\exp\left(
4n\varepsilon_k\varepsilon_k'\frac{\gamma_k^2}{\omega_k}\right)\right]
\\&=\prod_{k=1}^L\cosh\left(\frac{4n\gamma_k^2}{\omega_k}\right)
\\&\le\exp\left(8n^2\sum_{k=1}^L\frac{\gamma_k^4}{\omega_k^2}
\right),
\end{align*}
where the first inequality follows from $1+x\le e^x$, and the last
inequality follows from $\cosh(x)\le e^{x^2/2}$. Consequently,
\begin{align*}
\operatorname{TV}(\mathsf P_0,\mathsf P_1)\le\frac{1}{2}\left(
\mathbb{E}_{\mathsf P_0}[L_n^2]-1\right)^{1/2}\le\frac{1}{2}\left[\exp\left(
8n^2\sum_{k=1}^L\frac{\gamma_k^4}{\omega_k^2}\right)-1\right]^{1/2}.
\end{align*}
\end{proof}
We are now ready to complete the proof of the minimax lower bound in
Proposition~\ref{prop:r=2p} by combining
Lemmas~\ref{lem:twopointmixtureversion} and
\ref{lem:rademacher-block-construction} with a suitable choice of parameters.
\subsubsection{Proof of the minimax lower bound in
Proposition~\ref{prop:r=2p}}\label{appendix:proofoftheminimaxlowerboundr=2p}
We distinguish two cases.
\begin{proof}
\textbf{Case 1.} If $1 \le n <4^{8p}$.

By the two-point lower bound established in Section~\ref{sec:lowerboundr>2p}, 
\begin{equation*}
\mathcal{R}^{\star}_{n,m}\left(\mathcal{K}_{p,2p}(b)\right)\gtrsim_{p}\frac{b}{\sqrt{n}}.
\end{equation*}
Since $n<4^{8p}$, we have
\begin{equation*}
\left[\log\left(n+1\right)\right]^{1/4} \le \left[\log\left(4^{8p}+1\right)\right]^{1/4}.
\end{equation*}
Therefore,
\begin{equation*}
\mathcal{R}^{\star}_{n,m}\left(\mathcal{K}_{p,2p}(b)\right) \gtrsim_p\frac{1}{\left[\log\left(4^{8p}+1\right)\right]^{1/4}}b\frac{\left[\log(n+1)\right]^{1/4}}{\sqrt{n}}.
\end{equation*}

\textbf{Case 2.} If $n \ge 4^{8p}.$
Set
\begin{equation*}L \coloneqq \left\lfloor
\frac{\log n}{4p\log 4}\right\rfloor.
\end{equation*}
Since $n\ge 4^{8p}$, we have
\begin{equation*}
1 \le \frac{\log n}{8p\log 4}\le L \le \frac{\log n}{4p\log 4}.
\end{equation*}
For every $k\in\{1,\ldots,L\}$, define
\begin{equation*}
\omega_k \coloneqq\frac{4^{2p(L-k)}}{n\sqrt L}, \quad b_k \coloneqq
\left(\frac{b^2}{2\kappa_{p,d}^2Ls_k}\right)^{1/(2p)},
\end{equation*}
and
\begin{equation*}
a_k \coloneqq\frac{2}{3}b_k, \quad\gamma_k\coloneqq\frac{1}{4}\left(
\frac{\omega_k}{n\sqrt L}\right)^{1/2}.
\end{equation*}
We first verify that these parameters satisfy the assumption of Lemma~\ref{lem:rademacher-block-construction}. Since $\omega_{k+1}=4^{-2p}\omega_k$, we have
\begin{equation*}
b_{k+1}=4b_k\qquad\text{and}\quad a_{k+1}=4a_k.
\end{equation*}
Moreover,
\begin{equation*}
b_k=\frac{3}{2}a_k<4a_k=a_{k+1},
\end{equation*}
and therefore
\begin{equation*}
0<a_1<b_1<a_2<b_2<\cdots<a_L<b_L.
\end{equation*}
Besides, we also have
\begin{equation*}
\Omega \coloneqq\sum_{k=1}^L\omega_k=\frac{1}{n\sqrt L}\sum_{j=0}^{L-1}4^{2pj}\le  \frac{4^{2pL}}{(4^{2p}-1)n\sqrt L}.
\end{equation*}
Since $L \le \log n /(4p\log 4)$, then
\begin{equation*}
4^{2pL} \le 4^{2p\log n/(4p\log4)} \le \exp\left(2p\log 4\frac{\log n}{4p\log 4} \right)=\sqrt n.
\end{equation*}
It follows that
\begin{equation*}
\Omega \le\frac{1} {(4^{2p}-1)\sqrt{nL}} \le 1.
\end{equation*}
Furthermore, since $\omega_k\ge \omega_L=\frac{1}{n\sqrt L},$ we obtain
\begin{equation*}
\frac{\gamma_k}{\omega_k} = \frac{1}{4\sqrt{n\sqrt L\omega_k}} \le \frac{1}{4}.
\end{equation*}
Thus,
\begin{equation*}
0\le\gamma_k\le\frac{\omega_k}{2},\quad k=1,\ldots,L.
\end{equation*}

On the other hand, by the definition of $b_k$,
\begin{equation*}
\omega_kb_k^{2p} = \frac{b^2}{2\kappa_{p,d}^2L}.
\end{equation*}
Consequently,
$$\kappa_{p,d} \left( \sum_{k=1}^L\omega_kb_k^{2p}
\right)^{1/2}= \kappa_{p,d}\left( \frac{b^2}{2\kappa_{p,d}^2}
\right)^{1/2}= \frac{b}{\sqrt2}\le b.
$$
For $\varepsilon \in \{-1,1\}^L$, we have
\begin{equation*}
\int_{\mathbb{S}^{d-1}} \left(\mathbb{E}_{\mu_0}\left|\langle X,\theta \rangle \right|^{2p}\right)^{1/2}d\sigma(\theta) =\kappa_{p,d} \left[\sum_{k=1}^{L} \frac{\omega_k}{2}\left(a^{2p}_k+b^{2p}_k\right)\right]^{1/2} \le \kappa_{p,d}\left(\sum_{k=1}^L\omega_kb^{2p}_k\right)^{1/2} \le b,
\end{equation*}
and
\begin{align*}
\int_{\mathbb{S}^{d-1}} \left(\mathbb{E}_{\mu_\varepsilon}\left|\langle X,\theta \rangle \right|^{2p}\right)^{1/2}d\sigma(\theta)& \le \kappa_{p,d}\left[\sum_{k=1}^{L}\left(\frac{\omega_k}{2}+\varepsilon_k\gamma_k\right)a^{2p}_k+\sum_{k=1}^{L}\left(\frac{\omega_k}{2}-\varepsilon_k\gamma_k\right)b^{2p}_k\right]^{1/2}\\
&\le \kappa_{p,d}\left[\sum_{k=1}^{L}\left(\frac{\omega_k}{2}+\gamma_k\right)a^{2p}_k+\sum_{k=1}^{L}\left(\frac{\omega_k}{2}+\gamma_k\right)b^{2p}_k\right]^{1/2}\\
& \le \kappa_{p,d}\left[\sum_{k=1}^{L}\omega_k\left(a^{2p}_k+b^{2p}_k\right)\right]^{1/2}\\
&\le \kappa_{p,d}\left(2\sum_{k=1}^{L} \omega_kb^{2p}_k\right)^{1/2}\\
&=b.
\end{align*}
Therefore,
\begin{equation*}
\mu_0,\mu_{\varepsilon} \in \mathcal K_{p,2p}(b) \quad \text{for every }
\varepsilon\in\{-1,1\}^L.
\end{equation*}
Besides, since $b_k-a_k=\frac{b_k}{3}$, we have
$$
\operatorname{SW}_p^p(\mu_{\varepsilon},\mu_0)=\kappa_{p,d} \sum_{k=1}^L
\gamma_k(b_k-a_k)^p= \frac{\kappa_{p,d}}
{4\cdot 3^p\sqrt nL^{1/4}} \sum_{k=1}^L \sqrt{\omega_k}b_k^p.
$$
For every $k\in\{1,\ldots,L\}$,
\begin{equation*}
\sqrt{\omega_k}b_k^p =\left(\omega_kb_k^{2p}\right)^{1/2}= \frac{b}{\sqrt2\kappa_{p,d}\sqrt L}.
\end{equation*}
Hence,
\begin{equation*}
\operatorname{SW}_p^p(\mu_\varepsilon,\mu_0)=\frac{b}{4\sqrt2\,3^p} \frac{L^{1/4}}{\sqrt n}.
\end{equation*}
Moreover,
\begin{equation*}
\frac{\gamma_k^4}{\omega_k^2} = \frac{1}{256n^2L},
\end{equation*}
and therefore
\begin{equation*}
8n^2 \sum_{k=1}^L\frac{\gamma_k^4}{\omega_k^2} = \frac{1}{32}.
\end{equation*}
The total variation bound in
Lemma~\ref{lem:rademacher-block-construction} then yields
\begin{equation*}
\operatorname{TV}(\mathsf P_0,\mathsf P_1) \le \frac{1}{2}
\left(e^{1/32}-1\right)^{1/2} < \frac{1}{4},
\end{equation*}
where $\mathsf P_0 \coloneqq \mu_0^{\otimes n}$ and $\mathsf P_1 \coloneqq 2^{-L} \sum_{\varepsilon\in\{-1,1\}^L} \mu_\varepsilon^{\otimes n}$.

Finally, applying Lemma~\ref{lem:twopointmixtureversion} with
$\mathcal A=\{-1,1\}^L$ and $\nu_0=\mu_0$, we have
$$\mathcal R_{n,m}^{\star}\left(\mathcal K_{p,2p}(b)\right) \ge
\frac{1}{4} \operatorname{SW}_p^p(\mu_{\varepsilon},\mu_0) \left[ 1-\operatorname{TV}(\mathsf P_0,\mathsf P_1) \right] \ge  \frac{3b}{64\sqrt23^p} \frac{L^{1/4}}{\sqrt n}.$$
Since $L \ge \frac{\log n}{8p\log 4}$, we conclude that
\begin{equation*}
\mathcal R_{n,m}^{\star}\bigl(\mathcal K_{p,2p}(b)\bigr)
\ge \frac{3}{64\sqrt23^p(8p\log 4)^{1/4}}b\frac{(\log n)^{1/4}}{\sqrt n}.
\end{equation*}
Finally, since $n\ge 4^{8p}\ge 2$,
\begin{equation*}
\log(n+1) \le \log(2n) = \log 2+\log n\le 2\log n.
\end{equation*}
Therefore,
\begin{equation*}
\mathcal R_{n,m}^{\star}\left(\mathcal K_{p,2p}(b)\right)
\ge \frac{3}{64\sqrt23^p(16p\log 4)^{1/4}}b\frac{\left[\log(n+1)\right]^{1/4}}{\sqrt n}.
\end{equation*}
Combining the two cases, we conclude that
\begin{equation*}
\mathcal R_{n,m}^{\star}\left(\mathcal K_{p,2p}(b)\right) \gtrsim_p b \frac{\left[\log (n+1)\right]^{1/4}}{\sqrt{n}},
\end{equation*}
\end{proof}
\section{Comparison with the approach of \cite{manole2022minimax}}
\label{appendix:comparisonmanole}
The main technical difficulty, both in the approach of
\citet{manole2022minimax} and in ours, is the control of weighted empirical
quantile errors. For a fixed $\theta\in\mathbb S^{d-1}$, let $F_\theta$ and $G_\theta$ denote the distribution functions of $\Pi^\theta_\#\mu$ and
$\Pi^\theta_\#\nu$, respectively, and let $F_{n,\theta}$ and
$G_{m,\theta}$ denote the empirical distribution functions associated
with $\Pi^\theta_\#\widehat\mu_n$ and
$\Pi^\theta_\#\widehat\nu_m$, respectively.
% Throughout this appendix, for any probability measure
% $\Delta $ on $\mathbb R^d$, we write
% \begin{equation*}
% M_{\Delta,r}(\theta)\coloneqq\int_{\mathbb R^d}|\langle x,\theta\rangle|^rd\Delta(x),\qquad \theta\in\mathbb S^{d-1}.
% \end{equation*}

To illustrate the comparison, consider a representative term of the form
\begin{equation*}
T_{n,\theta}\coloneqq\int_0^1\left|G_\theta^{-1}(u)\right|^{p-1}\left|F_{n,\theta}^{-1}(u)-F_\theta^{-1}(u)\right|\,du.
\end{equation*}

The argument of \citet{manole2022minimax} first uses the projected
$r$-th moments to control simultaneously the population and empirical
quantiles. In our notation, their bound takes the form
\begin{equation*}
\max\left\{\left|F_\theta^{-1}(u)\right|,
\left|G_\theta^{-1}(u)\right|,\left|F_{n,\theta}^{-1}(u)\right|,
\left|G_{m,\theta}^{-1}(u)\right|\right\}\lesssim_{r}\left(\frac{
M_{\mu,r}(\theta)+M_{\nu,r}(\theta)+M_{\widehat\mu_n,r}(\theta)+M_{\widehat\nu_m,r}(\theta)
}{u(1-u)
}\right)^{1/r};
\end{equation*}
see \citet[Appendix~D, Step~0]{manole2022minimax}.
They then use a peeling argument near the two endpoints to control this
singular factor; see
\citet[Appendix~D, Steps~1--2]{manole2022minimax}.
When $r>2p$, the layerwise contributions obtained in their argument form
a summable series.

The remaining difficulty is to control
$\left|F_{n,\theta}^{-1}-F_\theta^{-1}\right|$ on these layers. For this
purpose, \citet{manole2022minimax} use a relative VC inequality. For
$\varepsilon\in(0,1)$, write
\begin{equation*}
\rho_{\varepsilon,n}\coloneqq\sqrt{\frac{16}{n}\left[\log\left(\frac{16}{\varepsilon}\right)+\log(2n+1)\right]},
\end{equation*}
for the quantity denoted by $\nu_{\varepsilon,n}$ in
\citet{manole2022minimax}. Let $\gamma_{\varepsilon,n}$ and
$\eta_{\varepsilon,n}$ be the two deterministic functions defined in
\citet[Eq.~(44)]{manole2022minimax}; these functions depend on
$\varepsilon$ and $n$ through $\rho_{\varepsilon,n}$. For each fixed
$\theta\in\mathbb S^{d-1}$, define
\begin{equation*}
\mathcal A_{\varepsilon,n}(\theta)\coloneqq\left\{
F_\theta^{-1}\bigl(\gamma_{\varepsilon,n}(u)\bigr)\le
F_{n,\theta}^{-1}(u)\le F_\theta^{-1}\bigl(\eta_{\varepsilon,n}(u)\bigr),
\quad\text{for all }u\in(0,1)
\right\}.
\end{equation*}
The relative VC inequality gives
\begin{equation*}
\mathbb P\bigl(\mathcal A_{\varepsilon,n}(\theta)\bigr)\ge 1-\varepsilon;
\end{equation*}
see \citet[Appendix~D, Step~0]{manole2022minimax} and
\citet[Appendix~G.6]{manole2022minimax}.

Combining this uniform quantile control with the preceding peeling
argument, their argument yields, for
$\rho_{\varepsilon,n}\le1$, a bound of the form
\begin{equation*}
T_{n,\theta}\lesssim_{p,r}\rho_{\varepsilon,n}\left(M_{\mu,r}(\theta)+M_{\nu,r}(\theta)+M_{\widehat\mu_n,r}(\theta)+M_{\widehat\nu_m,r}(\theta)
\right)^{p/r}
\end{equation*}
on $\mathcal A_{\varepsilon,n}(\theta)$; see
\citet[Appendix~D, Steps~2--4, in particular Eq.~(54)]{manole2022minimax}.

They subsequently control the empirical moment contribution and convert
this high-probability estimate into an expectation bound; see
\citet[Appendix~D, Step~4]{manole2022minimax}. Importantly, although the
peeling argument makes the endpoint contributions summable when $r>2p$,
it does not remove the prefactor $\rho_{\varepsilon,n}$. The logarithmic
dependence on $n$ is already present in this quantity through the term
$\log(2n+1)$ inherited from the relative VC inequality. Consequently, the
final expectation estimate retains the factor $\log n / n$ see \citet[Appendix~D, Step~5]{manole2022minimax}.

Our approach starts from the same reduction to weighted empirical
quantile errors, but differs in how the quantile error itself is
controlled. Rather than using a uniform high-probability bound for the
empirical quantile process, we work directly in expectation. More
precisely, Proposition~\ref{prop:peelingtool} establishes the localized
estimate
\begin{equation*}
\mathbb E\int_0^\alpha\left|F_{n,\theta}^{-1}(u)-F_\theta^{-1}(u)\right|du\lesssim_r\frac{M_{\mu,r}(\theta)^{1/r}}{\sqrt n}
\alpha^{1/2-1/r},\qquad \alpha\in[0,1],
\end{equation*}
together with the analogous upper-tail bound (see Remark~\ref{rem:peelingtool}).
\section{Comparison with \citet{staudt2025convergence}}\label{appendix:Staudt}
In this appendix, we aim to show how the results of \citet{staudt2025convergence} specialize to our setting and clarify the rates they yield for our considered problems. Moreover, as mentioned in Section~\ref{sec:relatedwork}, we provide a direct verification showing that, although their alternative approach removes the independence assumption between two samples, its assumption do not cover the cost $c(x,y)=|x-y|^p$ considered in our setting.
\subsection{Specialization of their main bound}\label{appendix:SpecializationStaudt}
We first recall the one-dimensional version of the result of \citet{staudt2025convergence}, which will be used below to derive the rates yielded by their approach in our setting
\begin{theorem}[One-dimensional version of Corollary~3.1 in \citet{staudt2025convergence}]\label{theorem:staudt}
Let $p \ge 1$, $p<q\le 2p$, $\epsilon>0$, and $1 \le n\le m$. Let $\eta_1$, $\eta_2 \in \mathcal{P}_{q+\epsilon}\left(\mathbb{R}\right)$, and let $\widehat{\eta}_{1,n}$ and $\widehat{\eta}_{2,m}$ denote the empirical measures associated with two independent samples from $\eta_1$ and $\eta_2$ respectively. 
\begin{equation*}
\mathbb{E}\left|\operatorname{W}^p_p\left(\widehat{\eta}_{1,n},\widehat{\eta}_{2,m}\right)-\operatorname{W}^p_p\left(\eta_1,\eta_2\right)\right|\lesssim n^{-1/2}+n^{-(q-p)/q},
\end{equation*}
where the implicit constant depends on $p,q,\epsilon$ and the $(q+\epsilon)$- th moments of $\eta_1$ and $\eta_2$.
\end{theorem}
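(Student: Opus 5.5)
The statement is obtained from \citet[Corollary~3.1]{staudt2025convergence}. I would prove it by specializing that corollary directly, and use the paper's own one-dimensional lemmas as a self-contained check on the exponents.

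\textbf{Step 1: specialize the corollary.} I would apply their result on $\mathbb R$ (ambient dimension one) with cost $c(x,y)=|x-y|^p$ and moment order $q+\epsilon$. Their hypotheses are of three kinds:
\begin{itemize}
\item a polynomial growth bound $c(x,y)\le 2^{p-1}(|x|^p+|y|^p)$, which is immediate;
\item a local regularity condition on $c$, which follows from the mean value theorem, as in Lemma~\ref{lem:sec5basicinequality};
\item independence of the two samples, which holds by assumption.
\end{itemize}
Their bound has two parts. The first comes from the covering or intrinsic-dimension part of the cost, and in dimension one it contributes $n^{-1/2}+m^{-1/2}$. The second comes from the tails, and with $q+\epsilon$ moments and $p<q\le 2p$ it contributes $n^{-(q-p)/q}+m^{-(q-p)/q}$. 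Since $n\le m$, the $m$-terms are dominated by the $n$-terms. The constants absorb $p,q,\epsilon$ and the $(q+\epsilon)$-th moments. The real work here is bookkeeping: matching their parametrization of the moment exponent to ours.

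\textbf{Step 2: cross-check with the paper's lemmas.} To guard against a mismatch in that translation, I would rederive the bound in $d=1$. There $\mathbb S^0=\{\pm1\}$, so $\operatorname{SW}_p^p=\operatorname{W}_p^p$ and $M_{\eta_i,r}(\pm1)$ is the $r$-th moment of $\eta_i$. Set $r\coloneqq q+\epsilon>p$. Lemma~\ref{lem:decomposition} reduces the error to the four terms $\mathsf A_{n},\mathsf B_{m},\mathsf C_{n},\mathsf D_{m}$. I would then split on $r$.
\begin{itemize}
\item If $r<2p$: Lemma~\ref{lem:projected-empirical-rate} controls $\mathsf C_n$ and $\mathsf D_m$, and Lemma~\ref{lem:truncated-weighted-quantile} controls $\mathsf A_n$ and $\mathsf B_m$. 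This gives the rate $n^{-(r-p)/r}\le n^{-(q-p)/q}$, because $t\mapsto (t-p)/t$ is increasing.
\item If $r>2p$: Lemmas~\ref{lem:projected-empirical-rate} and~\ref{lem:weighted-quantile} give $n^{-1/2}$.
\item If $r=2p$ (which forces $q<2p$): Lemmas~\ref{lem:projected-empirical-ratev2} and~\ref{lem:endpointweightedquantile} give $\sqrt{\log(n+1)/n}$. This is $\lesssim_{p,q} n^{-(q-p)/q}$ because $(q-p)/q<1/2$ strictly.
\end{itemize}
In every case the bound is at most a constant multiple of $n^{-1/2}+n^{-(q-p)/q}$. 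The constants are multiples of $M_{\eta_1,r}^{p/r}$, $M_{\eta_2,r}^{p/r}$ and the mixed terms $M_{\eta_1,r}^{1/r}M_{\eta_2,r}^{(p-1)/r}$. The $p=1$ case follows from the reverse triangle inequality together with the same one-sample lemmas.

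\textbf{Main obstacle.} I expect the hardest part to be verifying that the cost $|x-y|^p$ meets the exact regularity assumptions of their Corollary~3.1 as stated. If that verification is awkward, the route in Step 2 already establishes the displayed bound, with a rate at least as fast as claimed.
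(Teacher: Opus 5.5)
Your proposal is correct, and it contains two arguments. Step~1 matches the paper's treatment exactly: the paper does not prove this statement, it only quotes it as the one-dimensional case of \citet[Corollary~3.1]{staudt2025convergence}. Consequently your guesses about the form of their hypotheses are not checked there either. If you do check them, use the conditions of their main, independence-based argument; the alternative assumptions of their Appendix~A genuinely fail for $|x-y|^p$ with $p>1$, as shown in Appendix~\ref{appendix:verificationStaudt}.

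Step~2 is a genuinely different, self-contained route, and it is sound. Take $d=1$, $\theta=1$ and $r=q+\epsilon$. Then Lemma~\ref{lem:decomposition}, combined with Lemmas~\ref{lem:projected-empirical-rate}, \ref{lem:weighted-quantile}, \ref{lem:truncated-weighted-quantile}, \ref{lem:projected-empirical-ratev2} and~\ref{lem:endpointweightedquantile}, covers the three cases as you describe. None of these lemmas relies on Staudt's result, so there is no circularity. At $r=2p$, the factor $\sqrt{\log(n+1)}$ is absorbed into $n^{1/2-(q-p)/q}=n^{\epsilon/(2q)}$. This costs a constant depending on $q$ and $\epsilon$, which the statement allows.

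Compared with the citation, this route never uses independence of the two samples. It also gives the bound $n^{-(r-p)/r}$, $\sqrt{\log(n+1)/n}$ or $n^{-1/2}$, depending on how $q+\epsilon$ compares with $2p$. This bound is never worse than the stated one and is strictly better whenever $q<2p$. That is precisely the gap the paper points out in Appendix~\ref{appendix:SpecializationStaudt} when comparing the cited rate with its own. The citation only buys you the result in the original authors' form, without depending on the paper's quantile machinery.
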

Back to our setting, for $\mu,\nu \in \mathcal{K}_{p,r}(b)$, in Section~\ref{sec:proofsupperbound}, we obtained
\begin{equation*}
\mathbb{E}\left|\operatorname{SW}^p_p\left(\widehat{\mu}_n,\widehat{\nu}_m\right)-\operatorname{SW}^p_p\left(\mu,\nu\right)\right| \le \int_{\mathcal{G}}\mathbb{E}\left|\operatorname{W^p_p\left(\Pi^{\theta}_{\#}\widehat{\mu}_n,\Pi^{\theta}_{\#}\widehat{\nu}_m\right)-\operatorname{W}^p_p\left(\Pi^{\theta}_{\#}\mu,\Pi^{\theta}_{\#}\nu\right)}\right|d\sigma(\theta),
\end{equation*}
where $\mathcal{G}=\left\{\theta \in \mathbb{S}^{d-1}:M_{\mu,r}(\theta)<\infty \text{ and } M_{\nu,r}(\theta)<\infty\right\}$.

We note that if $\mathbb{E}_{X\sim \mu}\left|\langle X,\theta\rangle \right|^r=\mathbb{E}_{Y\sim \nu}\left|\langle Y,\theta\rangle \right|^r=0$, then $\Pi^{\theta}_{\#}\mu=\Pi^{\theta}_{\#}\nu =\delta_0$. Consequently, in this case $\mathbb{E}\left|\operatorname{W}^p_p\left(\Pi^{\theta}_{\#}\widehat{\mu}_n,\Pi^{\theta}_{\#}\widehat{\nu}_m\right)-\operatorname{W}^p_p\left(\Pi^{\theta}_{\#}\mu,\Pi^{\theta}_{\#}\nu\right)\right|=0$. As a result, we have
\begin{equation*}
\mathbb{E}\left|\operatorname{SW}^p_p\left(\widehat{\mu}_n,\widehat{\nu}_m\right)-\operatorname{SW}^p_p\left(\mu,\nu\right)\right|\le\int_{\mathcal{G}^*}\mathbb{E}\left|\operatorname{W}^p_p\left(\Pi^{\theta}_{\#}\widehat{\mu}_n,\Pi^{\theta}_{\#}\widehat{\nu}_m\right)-\operatorname{W}^p_p\left(\Pi^{\theta}_{\#}\mu,\Pi^{\theta}_{\#}\nu\right)\right|d\sigma(\theta),
\end{equation*}
where $\mathcal{G}^*\coloneqq \left\{\theta \in \mathbb{S}^{d-1}:0<M_{\mu,r}(\theta)+M_{\nu,r}(\theta)<\infty\right\}$.

Now, we fix $\theta \in \mathcal{G}^*$ and set $a_{\theta}\coloneqq \left(M_{\mu,r}(\theta)+M_{\nu,r}(\theta)\right)^{1/r}$. We denote by $\mu_{new}$ and $\nu_{new}$ the distributions obtained by rescaling $\mu$ and $\nu$ by the factor $a^{-1}_{\theta}$ respectively.

% $$\mu_{\text{new}}\coloneqq \frac{\mu}{a_{\theta}} \quad \nu_{\text{new}}\coloneqq \frac{\nu}{a_{\theta}}$$

Therefore, obviously, $M_{\mu_{\text{new}},r}(\theta)$ and $M_{\mu_{\text{new}},r}(\theta)$ are finite. We have
\begin{itemize}
    \item If $r>2p$, applying Theorem~\ref{theorem:staudt} (with $p=p$, $\epsilon=r-2p>0$, $q=2p$, $\eta_1=\Pi^{\theta}_{\#}\mu_{\text{new}}$, and $\eta_2=\Pi^{\theta}_{\#}\nu_{\text{new}}$), we have
    \begin{equation*}
\mathbb{E}\left|\operatorname{W}^p_p\left(\Pi^{\theta}_{\#}\widehat{\mu}_{\text{new},n},\Pi^{\theta}_{\#}\widehat{\nu}_{\text{new},m}\right)-\operatorname{W}^p_p\left(\Pi^{\theta}_{\#}\mu_{\text{new}},\Pi^{\theta}_{\#}\nu_{\text{new}}\right)\right|\lesssim n^{-1/2}+n^{-1/2},
    \end{equation*}
where $\widehat{\mu}_{\text{new},n}$ and $\widehat{\nu}_{\text{new},m}$ are empirical measures associated with two independent samples from $\mu_{\text{new}}$ and $\nu_{\text{new}}$ respectively. We then deduce that
\begin{align*}
\mathbb{E}\left|\operatorname{W}^p_p\left(\Pi^{\theta}_{\#}\widehat{\mu}_{n},\Pi^{\theta}_{\#}\widehat{\nu}_{m}\right)-\operatorname{W}^p_p\left(\Pi^{\theta}_{\#}\mu,\Pi^{\theta}_{\#}\nu\right)\right|&\lesssim n^{-1/2}\left(M_{\mu,r}\left(\theta\right)+M_{\nu,r}\left(\theta\right)\right)^{p/r}\\
& \lesssim n^{-1/2}\left[\left(M_{\mu,r}(\theta)\right)^{p/r}+(M_{\nu,r}(\theta))^{p/r}\right].
    \end{align*}
Finally, integrating over $\theta \in \mathcal{G}^*$ and using the definition of $\mathcal{K}_{p,r}(b)$ yields the parametric rate of order $n^{-1/2}$.
    \item If $p<r\le 2p$, since only an $r$-th moment is available, we take $q=r-\epsilon$ with $0<\epsilon <r-p$. Applying Theorem~\ref{theorem:staudt} (with $p=p$, $q=r-\epsilon$, $\eta_1=\Pi^{\theta}_{\#}\mu_{\text{new}}$, and $\eta_2=\Pi^{\theta}_{\#}\nu_{\text{new}}$), we obtain
    \begin{equation*}
\mathbb{E}\left|\operatorname{W}^p_p\left(\Pi^{\theta}_{\#}\widehat{\mu}_{\text{new},n},\Pi^{\theta}_{\#}\widehat{\nu}_{\text{new},m}\right)-\operatorname{W}^p_p\left(\Pi^{\theta}_{\#}\mu_{\text{new}},\Pi^{\theta}_{\#}\nu_{\text{new}}\right)\right|\lesssim n^{-1/2}+n^{-\frac{r-p-\epsilon}{r-\epsilon}}.
    \end{equation*}
Since $r-\epsilon<2p$, we have $\frac{r-p-\epsilon}{r-\epsilon}<\frac{1}{2}$. As a consequence, we obtain
\begin{equation*}
\mathbb{E}\left|\operatorname{W}^p_p\left(\Pi^{\theta}_{\#}\widehat{\mu}_{\text{new},n},\Pi^{\theta}_{\#}\widehat{\nu}_{\text{new},m}\right)-\operatorname{W}^p_p\left(\Pi^{\theta}_{\#}\mu_{\text{new}},\Pi^{\theta}_{\#}\nu_{\text{new}}\right)\right|\lesssim n^{-\frac{r-p-\epsilon}{r-\epsilon}}.
\end{equation*}
Proceeding as in the previous case yields a rate of order $n^{-\frac{r-p-\epsilon}{r-\epsilon}}$ which is strictly slower than the rate obtained in our work.
\end{itemize}
\subsection{Why their Appendix A assumptions do not cover $c(x,y)=|x-y|^p$.}\label{appendix:verificationStaudt}
We now focus on the alternative approach developed in \citet[Appendix~A]{staudt2025convergence}. Instead of stating the full conditions, we specialize the relavant part of their Assumption to the one-dimensional powered Wasserstein cost $c(x,y)=\left|x-y\right|^p$ for $x,y \in \mathbb{R}$. We first recall some useful notations. Let $c_{\mathcal{X}},c_{\mathcal{Y}}:\mathbb{R} \to \mathbb{R}_{+}$ be marginal cost functions satisfying
\begin{equation*}
c(x,y) \le c_{\mathcal{X}}(x)+c_{\mathcal{Y}}(y) \quad \forall x,y \in \mathbb{R},
\end{equation*}
and, for $R\ge 0$, define the corresponding cost balls by
\begin{equation*}
B_{\mathcal{X}}(R)\coloneqq \left\{x \in \mathbb{R}:c_{\mathcal{X}}(x) \le R\right\}, \quad B_{\mathcal{Y}}(R) \coloneqq \left\{y \in \mathbb{R}:c_{\mathcal{Y}}(y) \le R\right\}.
\end{equation*}
For our purpose, it suffices to consider the following two requirements of Assumption DBC in \citet[Appendix~A]{staudt2025convergence}. First, there must exist classes $\mathcal{F}$ and $\mathcal{G}$ of real-valued Borel functions and a set $\Phi_{c} \subset \mathcal{F} \times \mathcal{G}$ such that
\begin{equation*}
\inf_{\pi \in \Gamma \left(\rho, \xi\right)}\int_{\mathbb{R} \times \mathbb{R}}\left|x-y\right|^pd\pi(x,y)=\sup_{(f,g)\in \Phi_{c}}\left\{\int_{\mathbb{R}}fd\rho +\int_{\mathbb{R}}gd \xi\right\}
\end{equation*}
for every $\rho,\xi \in \mathcal{P}\left(\mathbb{R}\right)$ such that $\int c_{\mathcal{X}} d\rho+\int c_{\mathcal{Y}}d\xi <\infty$. This condition is condition $A.2$ in \cite{staudt2025convergence}. 

Second, there must exist a constant $\kappa>0$ and a pinning point $x_0 \in \mathbb{R}$ such that
\begin{equation*}
\sup_{f \in \mathcal{F}}\sup_{x \in B_{\mathcal{X}}(R)}\left|f(x)-f(x_0)\right| \le \kappa R, \quad \forall R\ge 1.
\end{equation*}
This is a part of condition A.3b in \citet{staudt2025convergence}. We now show that these two requirements cannot hold simultaneously when $p>1$.

Suppose, to the contrary, that those conditions are satisfied. Let $x_0$ be the pinning point mentioned above. We fix $x_1 \coloneqq x_0+1$. We choose $R_0 \ge \max \left\{1,c_{\mathcal{X}}(x_1)\right\}$, so that $x_1 \in B_{\mathcal{X}}(R_0)$. Hence, the second condition yields 
\begin{equation}\label{eq:appendixstaudtboundedconidtion}
\left|f(x_1)-f(x_0)\right| \le \kappa R_0 \quad \text{for every } f \in \mathcal{F}.
\end{equation}
Fix $t>x_1$. Obviously, we have
\begin{equation*}
\inf_{\pi \in \Gamma \left(\delta_{x_0},\delta_t{}\right)} \int_{\mathbb{R} \times \mathbb{R}}\left|x-y\right|^pd\pi(x,y) =\left|t-x_0\right|^p.
\end{equation*}
Hence, the first assumption yields that there exists $(f_t,g_t) \in \Phi_{c}$ such that 
\begin{equation*}
f_{t}(x_0)+g_t(t) \ge \left|t-x_0\right|^p-1.
\end{equation*}
On the other hand, by the first assumption, we also have
\begin{equation*}
\left|t-x_1\right|^p = \sup_{(f,g)\in \Phi_c}\left\{f(x_1)+g(t)\right\}.
\end{equation*}
Since $(f_t,g_t) \in \Phi_{c}$, it follows that
\begin{equation*}
f_t(x_1)+g_t(t) \le \left|t-x_1\right|^p.
\end{equation*}
Consequently,
\begin{equation*}
f_t(x_0)-f_t(x_1) \ge \left|t-x_0\right|^p-\left|t-x_1\right|^p-1=(t-x_0)^p-(t-x_0-1)^p-1.
\end{equation*}
When $t \to +\infty$, we have $f_t(x_0)-f_t(x_1) \to +\infty$, which contradicts the uniform bounded in \eqref{eq:appendixstaudtboundedconidtion}.
\end{appendix}
%%%%%%%%%%%%%%%%%%%%%%%%%%%%%%%%%%%%%%%%%%%%%%
%% Acknowledgements                         %%
%% should be provided in the                %%
%% Acknowledgements section.                %%
%%%%%%%%%%%%%%%%%%%%%%%%%%%%%%%%%%%%%%%%%%%%%%
\begin{acks}[Acknowledgments]
The author used ChatGPT to assist with mathematical reasoning and proof development, including computational checks and exploring mathematical intuition. ChatGPT was also used for writing support and improving the presentation of the manuscript. The author has reviewed the AI-assisted material and takes full responsibility for the correctness of the mathematical results and the final manuscript.

The author would like to thank Mohamed Hebiri for introducing him to the peeling technique. He is especially grateful to Nicolas Schreuder for his dedicated guidance, scientific and emotional support throughout this work. Although this paper displays only the author's name, it would not have been possible without Nicolas's support and encouragement.
\end{acks}

\bibliographystyle{plainnat}
\bibliography{references}

\end{document}